\newtheorem{theorem}{Theorem}
\newtheorem{prop}{Proposition}
\newtheorem{corollary}{Corollary}
\newtheorem{lemma}{Lemma}
\newcommand{\T}{{\rm pc}}
\newcommand{\nts}{\hspace{-0.5pt}}
\newcommand{\ts}{\hspace{0.5pt}}
\newcommand{\udo}[1]{\underaccent{$\text{.}$}{#1\ts}\nts}
\title[Ergodic properties of visible lattice points]
{Ergodic properties of visible lattice points}
\author{Michael Baake}
\author{Christian Huck}
\address{Fakult\"at f\"ur Mathematik, Universit\"at Bielefeld,
  Postfach 100131, Bielefeld, Germany}
\email{\{mbaake,huck\}@math.uni-bielefeld.de}
\begin{document}

%% Dies ist am Ende auszukommentieren.
%\date{\today}

\setlength{\unitlength}{1mm}

\begin{abstract}
Recently, the dynamical and spectral properties of square-free
integers, visible lattice points and various generalisations have
received increased attention. One reason is the connection of
one-dimensional examples such as $\mathscr B$-free numbers with
Sarnak's conjecture on the `randomness' of the M\"obius function,
another the explicit computability of correlation functions as well as
eigenfunctions for these systems together with intrinsic ergodicity
properties. Here, we summarise some of the results, with focus on
spectral and dynamical aspects, and expand a little on the
implications for mathematical diffraction theory.
\end{abstract}

\maketitle

\centerline{Dedicated to Nikolai P.~Dolbilin on the occasion of his 70th birthday}

%% Dies ist am Ende auszukommentieren.
%\centerline{\texttt{Draft}}

\section{Introduction}

Delone sets are important mathematical descriptions of atomic
arrangements, and Meyer sets are special cases with a rich spectral
structure~\cite{TAO,DD,Dolbilin}. Particularly well-studied are cut and
project sets or \emph{model}\/ sets, which underly the structure of
perfect quasicrystals~\cite{Shechtman,Steurer,TAO}. It is fair to say
that such structures are rather well understood. This is much less so
if one keeps uniform discreteness but relaxes relative denseness. In
fact, one might expect to leave the realm of pure point spectrum, at
least as soon as one has entropy~\cite{BLR}. However, there are
interesting examples such as $k$-free numbers or visible lattice
points that have positive topological entropy but nevertheless pure
point dynamical and diffraction spectrum. They are examples of
\emph{weak} model sets, and deserve a better understanding. Here, we
summarise some of the known results and add to the structure of their
topological and spectral properties. 

The paper is organised as follows. In Section~\ref{visible}, we use the visible points of $\mathbb Z^2$ as a
paradigm to formulate the results for this case in a geometrically
oriented 
manner and to develop our notation and methods while we go along. Section~\ref{kfree} extends the findings to $k$-free points of
$n$-dimensional lattices, while Section~\ref{bfree} looks into the
setting of $\mathscr B$-free systems as introduced
in~\cite{ELD,KLW}. Finally, in Section~\ref{number}, we analyse an
example from the number field generalisation of~\cite{CV} in our more
geometric setting of diffraction analysis. For convenience and better readability, the more technical issues are
presented in
two appendices.

\section{Visible square lattice points}\label{visible}

Two classic examples for the structure we are after are provided
by the \emph{square-free integers}\/ (the elements of $\mathbb Z$ that are
not divisible by any nontrivial square) and the \emph{visible
  points}\/ of a lattice (the points with coprime coordinates in a
lattice basis). Since our focus is on higher-dimensional cases, let us
take a closer look at the visible points of $\mathbb Z^2$,
$$
V\,=\,V_{\mathbb Z^2}\,=\,\mathbb
Z^2\setminus\bigcup_{\text{\scriptsize $p$ prime}} p\mathbb
Z^2\,=\,\{x\in\mathbb Z^2|\operatorname{gcd}(x)=1\},
$$
where $\operatorname{gcd}(x)=\operatorname{gcd}(x_1,x_2)$ for
$x=(x_1,x_2)$. Note that, throughout the text, $p$ will denote a prime
number. The set is illustrated in Fig.~\ref{fig: visible}, and
can also be found in many textbooks including~\cite{Apostol}, where it
is shown on the cover, and~\cite[Sec.\ 10.4]{TAO}. The following
result is standard; see~\cite[Prop.\ 10.4]{TAO} and references
therein for details.

\begin{prop}\label{propbasic}
The set\/ $V$\/ has the following properties.
\begin{itemize}
\item[(a)]
The set\/ $V$ is uniformly discrete, but not relatively
dense. In particular,\/ $V$ contains holes of arbitrary size
that repeat lattice-periodically. More precisely, given an inradius
$\rho>0$, there is a  sublattice of\/
$\mathbb Z^2$ depending on $\rho$ such that a suitable translate of this
sublattice consists of centres of holes of inradius at least $\rho$.
\item[(b)]
The group\/ $\operatorname{GL}(2,\mathbb Z)$ acts transitively on
$V$, and one has the partition\/ $\mathbb
Z^2=\dot\bigcup_{m\in\mathbb N_0}mV$ of\/ $\mathbb Z^2$ into
$\operatorname{GL}(2,\mathbb Z)$-invariant sets.
\item[(c)]
The difference set is\/\/ $V-V=\mathbb Z^2$.
\item[(d)]
The natural density of\/ $V$ exists and is given by
$\operatorname{dens}(V)=\frac{1}{\zeta(2)}=\frac{6}{\pi^2}$,
where\/ $\zeta$ denotes Riemann's zeta function.\hfill\qed
\end{itemize}
\end{prop}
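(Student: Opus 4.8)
The plan is to treat the four parts independently, since each rests on a different elementary tool. For (a), uniform discreteness is immediate: $V\subseteq\mathbb Z^2$, so distinct points are at distance at least $1$. The substantive claim is the existence of arbitrarily large holes, which I would produce via the Chinese Remainder Theorem. Given an inradius $\rho>0$, choose an integer $N\geq\rho$ and enumerate the $(2N+1)^2$ lattice points of the box $B_N=\{-N,\dots,N\}^2$ by pairwise distinct primes $p_{(i,j)}$. The simultaneous congruences $a\equiv -i$ and $b\equiv -j\pmod{p_{(i,j)}}$, taken over all $(i,j)\in B_N$, are solvable because the moduli are distinct primes and hence coprime; this yields a point $(a,b)$ with $p_{(i,j)}\mid\operatorname{gcd}(a+i,b+j)$ for every $(i,j)$. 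Thus $(a,b)+B_N$ meets $V$ in the empty set, giving a hole of inradius at least $N\geq\rho$. Since the congruences determine $(a,b)$ only modulo $M=\prod_{(i,j)}p_{(i,j)}$, the entire hole recurs under translation by the sublattice $M\mathbb Z^2$, which furnishes the lattice-periodic family of hole centres and rules out relative denseness.

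For (b), transitivity of $\operatorname{GL}(2,\mathbb Z)$ follows from the classical fact that a primitive vector completes to a lattice basis: if $\operatorname{gcd}(x_1,x_2)=1$, Bézout gives integers $u,v$ with $ux_1+vx_2=1$, and the matrix with columns $x$ and $(-v,u)^{\mathsf T}$ lies in $\operatorname{GL}(2,\mathbb Z)$ and sends $(1,0)^{\mathsf T}$ to $x$. For the partition, every $0\neq x\in\mathbb Z^2$ factors uniquely as $x=m\ts v$ with $m=\operatorname{gcd}(x)\in\mathbb N$ and $v\in V$, while $0=0\cdot V$; the sets $mV$ are disjoint because $m$ is recovered as the gcd, and each is $\operatorname{GL}(2,\mathbb Z)$-invariant since lattice automorphisms preserve primitivity and commute with scalar multiplication. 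For (c), the inclusion $V-V\subseteq\mathbb Z^2$ is trivial, and the reverse inclusion admits a direct construction: any $z=(z_1,z_2)\in\mathbb Z^2$ equals $v-w$ with $v=(z_1+1,1)$ and $w=(1,1-z_2)$, both of which lie in $V$ because each has a coordinate equal to $1$. No density input is needed here.

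For (d), I would compute the density by Möbius inversion over the squarefree $d$. Writing the points of a box $[-R,R]^2\cap\mathbb Z^2$ whose gcd is divisible by $d$ as exactly those in $d\mathbb Z^2$, inclusion--exclusion expresses the visible count as $\sum_{d}\mu(d)\,\bigl|d\mathbb Z^2\cap[-R,R]^2\bigr|$. Dividing by the area $(2R+1)^2$ and sending $R\to\infty$, the $d$-th term contributes $\mu(d)/d^2$, so the density exists and equals $\sum_{d=1}^{\infty}\mu(d)/d^2=1/\zeta(2)=6/\pi^2$.

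The main obstacle is precisely this last step: one must justify interchanging the infinite Möbius sum with the limit $R\to\infty$. The honest way is a uniform tail estimate, bounding the contribution of $d>D$ by a multiple of $\sum_{d>D}1/d^2$ uniformly in $R$ (using $|d\mathbb Z^2\cap[-R,R]^2|=(R/d)^2+O(R/d)$) and then letting first $R\to\infty$ and afterwards $D\to\infty$. The remaining parts are purely combinatorial and require only the Chinese Remainder Theorem and Bézout's identity.
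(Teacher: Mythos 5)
Your proposal is correct, and it reconstructs exactly the standard arguments that the paper itself does not spell out but delegates to its references (cf.\ \cite[Prop.~10.4]{TAO} and \cite{BMP}): the Chinese Remainder Theorem construction of lattice-periodic holes, the completion of a primitive vector to a $\operatorname{GL}(2,\mathbb Z)$-basis via B\'ezout, and M\"obius inversion with a uniform tail estimate for the density. One cosmetic point: the paper's natural density is taken along balls $B_R(0)$ rather than boxes, but your argument transfers verbatim since $\bigl|d\mathbb Z^2\cap B_R(0)\bigr|=\pi R^2/d^2+O(R/d)+O(1)$, so the same interchange of limit and sum goes through.
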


\begin{center}
\begin{figure}
\includegraphics[width=0.84\textwidth]{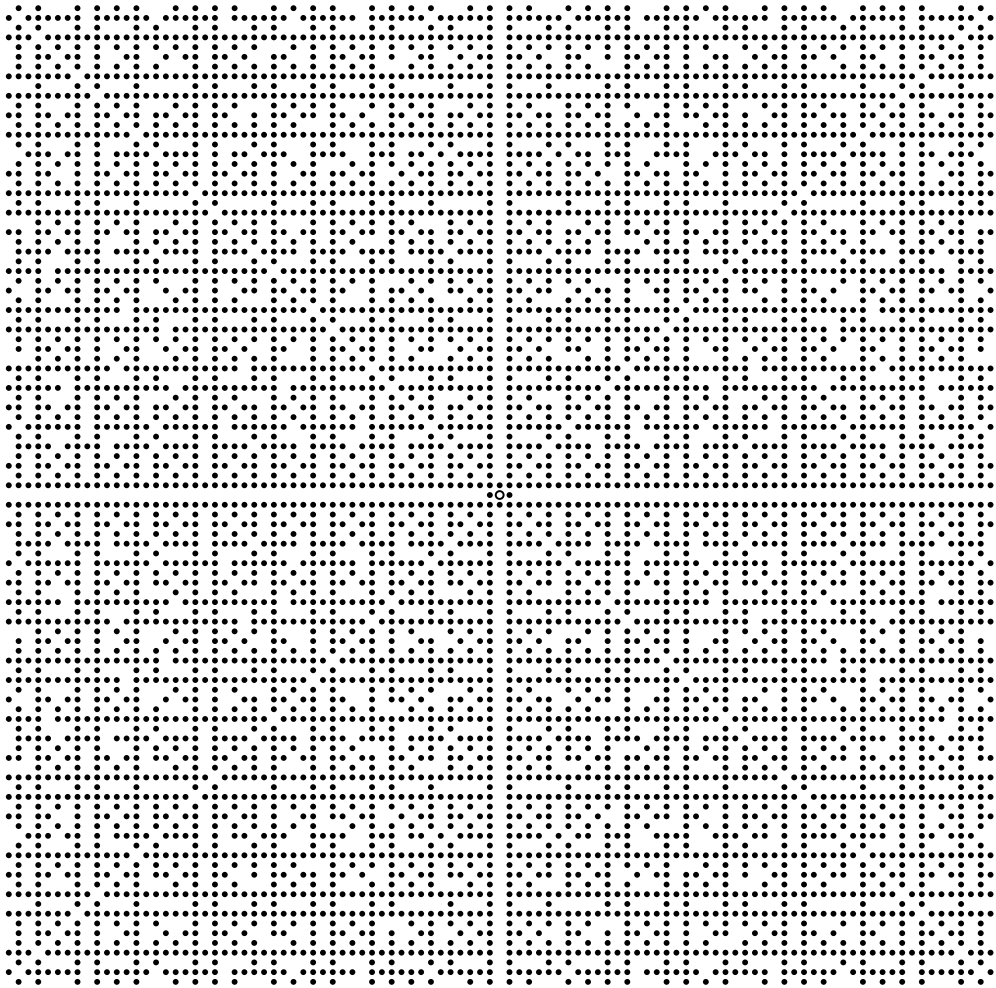}
\caption{A central patch of the visible points $V$ of the square lattice
  $\mathbb Z^2$. Note the invariance of $V$ with respect to $\operatorname{GL}(2,\mathbb Z)$.}
\label{fig: visible}
\end{figure}
\end{center}

Note that big holes are rare, but important; see~\cite[Rem.\
10.6]{TAO} for some examples. The interesting fact is
that they do not destroy the existence of patch frequencies, though
the latter clearly cannot exist uniformly. For the natural pair correlation
(or autocorrelation) coefficients,
$$
\eta(x)\,:=\,\lim_{R\to\infty}\frac{1}{\pi
  R^2}\big |V\cap (-x+V)\cap B_R(0)\big |,
$$
one finds the following result; compare~\cite[Lemma 10.6]{TAO},
\cite[Thm.\ 2]{BMP}, as well as \cite[Thm.\ 7]{PH}.

\begin{lemma}
For each\/ $x\in\mathbb Z^2$, the natural autocorrelation coefficient
$\eta(x)$ of\/ $V$ exists, and is given by
$$
\eta(x)\,=\,\xi\!\prod_{p\mid\operatorname{gcd}(x)}\left(1+\frac{1}{p^2-2} \right),
$$
where\/ $\xi=\prod_p(1-2p^{-2})\approx 0.3226$. In particular, with
$\operatorname{gcd}(0)=0$, this also gives the density\/ $\eta(0)=\prod_p(1-p^{-2})=1/\zeta(2)$.\qed
\end{lemma}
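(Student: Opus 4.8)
The plan is to expand the indicator function of $V$ by Möbius inversion, reduce the count to a sum of lattice-point counts in residue classes, evaluate those asymptotically, and resum the result as an Euler product. Writing $\mathbf 1_V(y)=\sum_{d\mid\operatorname{gcd}(y)}\mu(d)$ for $y\neq 0$, one gets
$$
\big|V\cap(-x+V)\cap B_R(0)\big|\,=\sum_{y\in B_R(0)\cap\mathbb Z^2}\mathbf 1_V(y)\,\mathbf 1_V(y+x)\,=\sum_{d,e\geqslant 1}\mu(d)\,\mu(e)\,A_{d,e}(R),
$$
where $A_{d,e}(R)$ counts the $y\in B_R(0)$ with $d\mid\operatorname{gcd}(y)$ and $e\mid\operatorname{gcd}(y+x)$. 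For each fixed $R$ this is a finite sum, since $d\mid\operatorname{gcd}(y)$ forces $d\leqslant R$ for every nonzero $y\in B_R(0)$, and likewise for $e$; the two excluded points $y=0$ and $y=-x$ contribute $O(1)$ and are negligible.

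Second, I would evaluate $A_{d,e}(R)$ via the Chinese remainder theorem. The componentwise conditions $y\equiv 0\pmod d$ and $y\equiv -x\pmod e$ are simultaneously solvable if and only if $\operatorname{gcd}(d,e)\mid\operatorname{gcd}(x)$, in which case the solution set is a coset of the sublattice $\operatorname{lcm}(d,e)\,\mathbb Z^2$, of density $\operatorname{lcm}(d,e)^{-2}$. Counting its points in the centred ball yields
$$
A_{d,e}(R)\,=\,\frac{\pi R^2}{\operatorname{lcm}(d,e)^2}\,\big[\operatorname{gcd}(d,e)\mid\operatorname{gcd}(x)\big]+O\!\left(\frac{R}{\operatorname{lcm}(d,e)}\right),
$$
with the standard boundary error for lattice points in a disc and an implied constant independent of $d,e$.

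Dividing by $\pi R^2$, the candidate value is the absolutely convergent double sum $\eta(x)=\sum_{d,e}\mu(d)\mu(e)\,[\operatorname{gcd}(d,e)\mid\operatorname{gcd}(x)]\,\operatorname{lcm}(d,e)^{-2}$, and the main obstacle is to justify interchanging $\lim_{R\to\infty}$ with this sum. I would handle this by truncating at $d,e\leqslant D$: for fixed $D$ the accumulated boundary errors contribute $O\big((\log D)^3/R\big)\to 0$ as $R\to\infty$, while the tail with $\max(d,e)>D$ is bounded in modulus by $\sum_{\max(d,e)>D}\operatorname{lcm}(d,e)^{-2}$, which tends to $0$ as $D\to\infty$. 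Absolute convergence and both bounds become transparent after the substitution $d=g d'$, $e=g e'$ with $g=\operatorname{gcd}(d,e)$ and $\operatorname{gcd}(d',e')=1$, giving $\operatorname{lcm}(d,e)^{-2}=(g d' e')^{-2}$ and a triple product of $\zeta(2)$-type sums. Sending first $R\to\infty$ and then $D\to\infty$ proves both the existence of the limit and the closed form of the sum.

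Finally, I would factor the double sum as an Euler product. As $\mu$ is supported on square-free integers, only the local choices $(d_p,e_p)\in\{1,p\}^2$ contribute at each prime $p$, producing the factor $1-2p^{-2}$ when $p\nmid\operatorname{gcd}(x)$ (the diagonal term $d_p=e_p=p$ is forbidden, since then $\operatorname{gcd}(d,e)=p\nmid\operatorname{gcd}(x)$) and $1-p^{-2}$ when $p\mid\operatorname{gcd}(x)$ (the diagonal term now survives and adds $+p^{-2}$). Hence $\eta(x)=\prod_{p\nmid\operatorname{gcd}(x)}(1-2p^{-2})\prod_{p\mid\operatorname{gcd}(x)}(1-p^{-2})$, and extracting $\xi=\prod_p(1-2p^{-2})$ leaves the factor $(1-p^{-2})/(1-2p^{-2})=1+1/(p^2-2)$ for each $p\mid\operatorname{gcd}(x)$, as claimed. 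The density statement $\eta(0)=\prod_p(1-p^{-2})=1/\zeta(2)$ is the special case in which every prime divides $\operatorname{gcd}(0)=0$.
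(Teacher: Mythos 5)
Your overall route---expanding $\mathbf 1_V$ by M\"obius inversion, evaluating $A_{d,e}(R)$ via the Chinese Remainder Theorem with the solvability criterion $\operatorname{gcd}(d,e)\mid\operatorname{gcd}(x)$, and resumming as an Euler product with local factors $1-2p^{-2}$ (for $p\nmid\operatorname{gcd}(x)$) and $1-p^{-2}$ (for $p\mid\operatorname{gcd}(x)$), then extracting $\xi$ via $(1-p^{-2})/(1-2p^{-2})=1+1/(p^2-2)$---is exactly the standard one underlying the proofs the paper cites for this lemma ([BMP, Thm.~2], [PH, Thm.~7]; the paper itself gives no proof). All of that algebra is correct. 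The genuine gap sits at the step you yourself identify as the main obstacle. Your tail bound is not valid as stated: for the finite-$R$ average one cannot bound $\frac{1}{\pi R^2}\sum_{\max(d,e)>D}|\mu(d)\mu(e)|\,A_{d,e}(R)$ termwise by $\sum_{\max(d,e)>D}\operatorname{lcm}(d,e)^{-2}$, because a pair with $\operatorname{lcm}(d,e)>R$ can still satisfy $A_{d,e}(R)=1$ (a single coset point in the ball), contributing $\asymp R^{-2}$, which vastly exceeds $\operatorname{lcm}(d,e)^{-2}$; and the number of pairs with $A_{d,e}(R)\geq 1$ is $\sum_y \tau(\operatorname{gcd}(y))\,\tau(\operatorname{gcd}(y+x))$, of order $R^2$, so these contributions do not trivially vanish after division by $\pi R^2$. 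Relatedly, ``sending first $R\to\infty$ and then $D\to\infty$'' only evaluates the iterated limit of the truncated sums; to prove that $\eta(x)$ itself exists you need the tail to be small \emph{uniformly in $R$}, and that uniform estimate is precisely the hard analytic content of the lemma---it is why the cited proofs in [BMP] carry explicit error bounds and why [PH] invokes Mirsky-type results.

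The gap is reparable by a standard device that your sketch omits. For any $y$ with both $\operatorname{gcd}(y)\leq D$ and $\operatorname{gcd}(y+x)\leq D$, the truncated M\"obius sums $\sum_{d\mid\operatorname{gcd}(y),\,d\leq D}\mu(d)$ agree with the full ones, so the tail terms cancel identically for such $y$; the tail is therefore supported on the sparse set of $y$ with $\operatorname{gcd}(y)>D$ or $\operatorname{gcd}(y+x)>D$. On that set, bound the truncated sums by $\tau(\operatorname{gcd}(y))\,\tau(\operatorname{gcd}(y+x))$, use $\#\{0\neq y\in B_R(0)\cap\mathbb Z^2 : g\mid\operatorname{gcd}(y)\}=O\big((R/g)^2\big)$ (valid since necessarily $g\leq R$), and combine with Cauchy--Schwarz and the convergence of $\sum_g \tau(g)^2 g^{-2}$ to obtain a tail bound of order $\big(\sum_{g>D}\tau(g)^2 g^{-2}\big)^{1/2}$, uniform in $R$; then your $3\varepsilon$-argument closes. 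A minor further correction: the coset count should read $A_{d,e}(R)=\pi R^2\operatorname{lcm}(d,e)^{-2}[\cdots]+O(R/\operatorname{lcm}(d,e))+O(1)$, the $O(1)$ being needed when $\operatorname{lcm}(d,e)>R$; in the truncated range $d,e\leq D$ this is harmless. With these repairs your argument becomes a complete proof along the same lines as the cited ones.
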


The autocorrelation measure of $V$, 
$$
\gamma\,=\,\sum_{x\in\mathbb Z^2}\eta(x)\delta_x,
$$
is thus well-defined, and is a translation bounded, positive definite measure on $\mathbb R^2$
by construction. Its Fourier transform $\widehat\gamma$ exists by
general arguments, compare~\cite[Ch.\ I.4]{BF} or~\cite[Rem.~8.7 and Prop.\ 8.6]{TAO}, and leads to the following result;
see~\cite[Thm.\ 10.5]{TAO} and~\cite[Thm.\ 3]{BMP}.

\begin{center}
\begin{figure}
\includegraphics[width=0.84\textwidth]{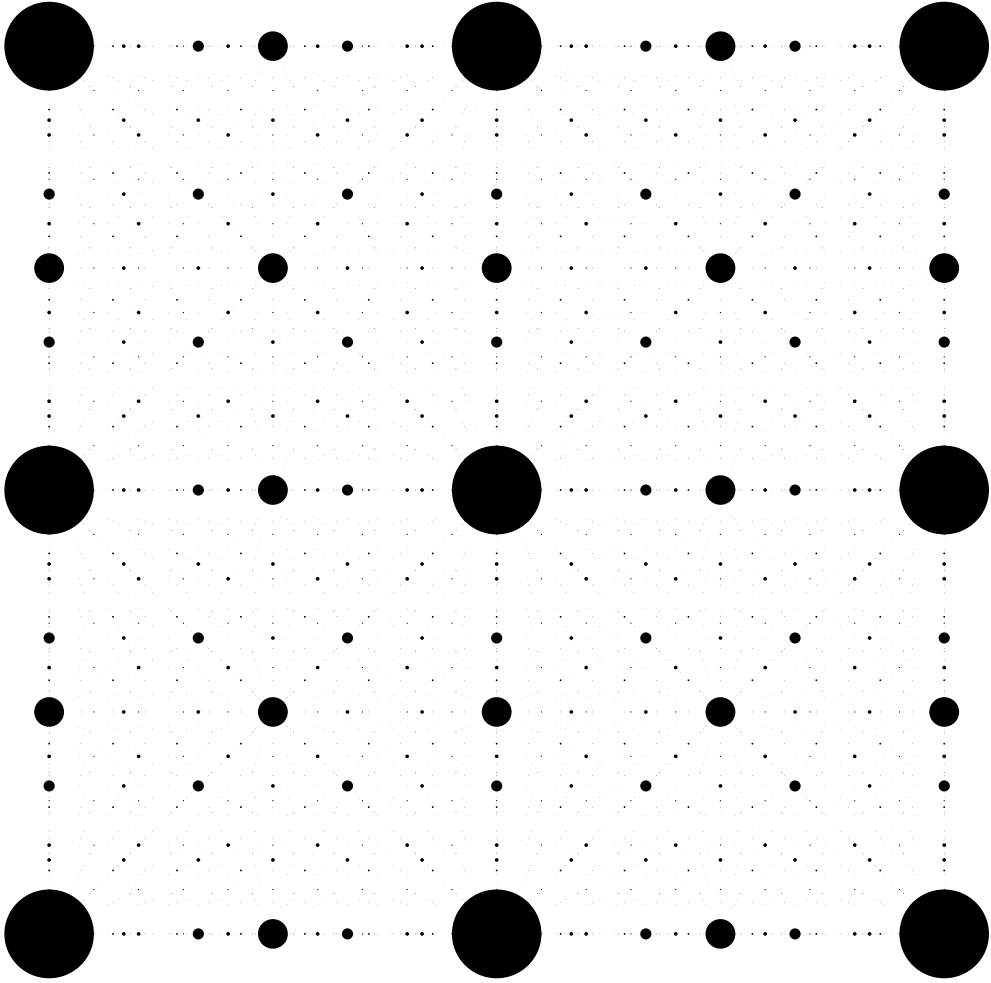}
\caption{Diffraction $\widehat{\gamma}$ of the visible points of
  $\mathbb Z^2$. A point measure at $k$ with intensity $I(k)$ is shown
  as a disk centred at $k$ with area proportional to $I(k)$. Shown are
the intensities with $I(k)/I(0)\ge 10^{-6}$ and $k\in [0,2]^2$. Its lattice of periods is $\mathbb Z^2$, and $\widehat{\gamma}$ turns out to
be $\operatorname{GL}(2,\mathbb Z)$-invariant.}
\label{fig: vispodiff}
\end{figure}
\end{center}

\begin{theorem}\label{diff}
The natural diffraction measure\/ $\widehat \gamma$ of the visible points\/ $V$ of
the square lattice\/ $\mathbb Z^2$ exists. It is a positive pure point
measure which is translation bounded and supported on the points of\/ $\mathbb Q^2$ with
square-free denominator, the Fourier-Bohr spectrum of $\gamma$, so
$$
\widehat{\gamma}=\sum_{{\substack{k\in\mathbb
      Q^2\\\operatorname{den}(k) \text{\scriptsize \,square-free}}}}I(k)\delta_k,
$$
where\/ $\operatorname{den}(k):=\operatorname{gcd}\{n\in\mathbb
N\,|\,nk\in\mathbb Z^2\}$. In particular,
$I(0)=(1/\zeta(2))^2=36/\pi^4$, and when\/ $0\neq k\in\mathbb Q^2$ has
square-free denominator
$\operatorname{den}(k)$, the corresponding intensity is given by
$$
I(k)=\left(\frac{6}{\pi^2}\prod_{p\mid\operatorname{den}(k)}\frac{1}{p^2-1}\right)^2.
$$
\qed
\end{theorem}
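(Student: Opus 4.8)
The plan is to read off $\widehat\gamma$ directly from the closed form for $\eta$ provided by the preceding Lemma, by exploiting its Euler-product structure. First I would recast the autocorrelation coefficient as a convergent product over primes,
$$
\eta(x)\,=\,\prod_p c_p(x),\qquad c_p(x)=\begin{cases}1-p^{-2},&p\mid\gcd(x),\\1-2p^{-2},&p\nmid\gcd(x),\end{cases}
$$
which agrees with $\eta(x)=\xi\prod_{p\mid\gcd(x)}(1+\tfrac1{p^2-2})$ because $\xi=\prod_p(1-2p^{-2})$ and $\tfrac{1-p^{-2}}{1-2p^{-2}}=1+\tfrac1{p^2-2}$. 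The decisive point is that each local factor $c_p$ depends only on $x\bmod p$, hence is a function on the finite group $(\mathbb Z/p\mathbb Z)^2$; expressing the indicator of $p\mathbb Z^2$ through the characters of that group gives the local Fourier expansion
$$
c_p(x)\,=\,(1-p^{-2})^2+\frac1{p^4}\sum_{0\neq k\in\frac1p\mathbb Z^2/\mathbb Z^2}\mathrm e^{2\pi\mathrm ik\cdot x},
$$
whose constant term is exactly the squared mean $(1-p^{-2})^2$.

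Next I would multiply these local expansions together. Since there are $p^2-1$ nonzero characters at level $p$, the non-constant part of $c_p/(1-p^{-2})^2$ is bounded in modulus by $\tfrac1{p^2-1}$, so the product over all primes converges absolutely and uniformly in $x$ and may be expanded term by term. A term of the expansion is labelled by a finite set $S$ of primes together with a nonzero frequency $k_p\in\frac1p\mathbb Z^2/\mathbb Z^2$ for each $p\in S$; by the Chinese remainder theorem their product is the single character with frequency $k=\sum_{p\in S}k_p$ of exact (square-free) denominator $N=\prod_{p\in S}p$, and conversely every $k$ with square-free denominator arises from exactly one such label. Collecting terms by frequency yields the absolutely convergent Fourier series $\eta(x)=\sum_k I(k)\,\mathrm e^{2\pi\mathrm ik\cdot x}$ with
$$
I(k)\,=\,\prod_{p\notin S}(1-p^{-2})^2\,\prod_{p\in S}\frac1{p^4},
$$
and a one-line manipulation using $(1-p^{-2})^2=(p^2-1)^2/p^4$ turns this into $\bigl(\tfrac6{\pi^2}\prod_{p\mid N}\tfrac1{p^2-1}\bigr)^2$, the empty product giving $I(0)=\zeta(2)^{-2}$.

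Finally I would convert this Fourier series for $\eta$ into the assertion about $\widehat\gamma$. As $\eta$ is positive definite by construction, Herglotz--Bochner represents it as the Fourier transform of a finite positive measure $\mu$ on the dual torus $\mathbb R^2/\mathbb Z^2$, and the diffraction $\widehat\gamma$, whose existence was recorded above, is the $\mathbb Z^2$-periodic extension of $\mu$ to $\mathbb R^2$. The absolutely convergent series identifies $\mu$ as the pure point measure $\sum_k I(k)\delta_k$ of total mass $\sum_k I(k)=\eta(0)=1/\zeta(2)$; its periodization is then exactly the claimed $\widehat\gamma$, which is positive and pure point, is supported on the square-free-denominator points of $\mathbb Q^2$, and is translation bounded because every unit cell carries the finite mass $1/\zeta(2)$. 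I expect the real work to lie in the middle step: rigorously justifying the term-by-term expansion of the infinite product and, above all, checking through the Chinese remainder theorem that each admissible frequency is produced once and only once, so that the coefficient collection is unambiguous. The bound $\sum_k I(k)=\prod_p(1-p^{-2})^2\cdot\prod_p(1+\tfrac1{p^2-1})=1/\zeta(2)<\infty$ is precisely what legitimises all the rearrangements.
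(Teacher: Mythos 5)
Your proposal is correct, but it takes a genuinely different route from the one the paper relies on. The paper proves Theorem~\ref{diff} by citation to \cite{BMP,PH}, where the argument writes $\widehat\gamma$ as a limit of diffraction measures of crystallographic approximants $\mathbb Z^2\setminus\bigcup_{p\le N}p\ts\mathbb Z^2$, upgrades vague to norm convergence via a Weierstra{\ss} M-test (so that pure pointedness survives the limit), and rests on Mirsky-type estimates for the existence of $\eta$ with error control; the paper itself only gestures at the Herglotz--Bochner picture as an ``alternative view'' after the theorem statement. You instead take the closed form of $\eta$ from the preceding Lemma as given and compute the spectral measure directly: factor $\eta(x)=\prod_p c_p(x)$ with $c_p(x)=(1-p^{-2})^2+p^{-4}\sum_{0\neq k\in\frac1p\mathbb Z^2/\mathbb Z^2}e^{2\pi i k\cdot x}$, expand the product (legitimate since $\sum_p\frac1{p^2-1}<\infty$ makes the expansion absolutely summable), match labels to frequencies bijectively via the Chinese Remainder Theorem, and identify $\varrho=\sum_k I(k)\delta_k$ by uniqueness in the Herglotz representation, whence $\widehat\gamma=\varrho*\delta_{\mathbb Z^2}$ as recorded in the paper. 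Your arithmetic checks out: the constant term $1-2p^{-2}+p^{-4}=(1-p^{-2})^2$, the coefficient bookkeeping $\prod_{p\notin S}(1-p^{-2})^2\prod_{p\in S}p^{-4}=\zeta(2)^{-2}\prod_{p\mid N}(p^2-1)^{-2}$, and the mass identity $\sum_k I(k)=\prod_p(1-p^{-2})=\eta(0)$ are all exact, and the last identity is precisely what forces $\varrho$ to be purely atomic with no mass left over. What the paper's route buys is self-containedness (it proves the existence of $\eta$ and the diffraction in one sweep, with quantitative error bounds) and a template that transfers verbatim to the $\mathscr B$-free setting of Section~\ref{bfree}; what your route buys is brevity and conceptual transparency: the local factorisation over $(\mathbb Z^2)_p$ makes the Euler-product structure of the intensities manifest and prefigures the Kronecker factor $\prod_p(\mathbb Z^2)_p$ of Theorem~\ref{c2}(c). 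The only dependencies you import --- existence of $\widehat\gamma$ by general positive-definiteness arguments and the relation $\widehat\gamma=\varrho*\delta_{\mathbb Z^2}$ --- are both explicitly available in the text before the theorem, so there is no gap.
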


The essence of this result is the pure point nature of
$\widehat\gamma$ together with its explicit computability via an
intensity formula in the form of a {\em finite}\/ product for any given
$k\in\mathbb Q^2$ with square-free denominator. Fig.~\ref{fig:
  vispodiff} illustrates the diffraction measure. Note that
$\widehat\gamma$ has the symmetry group $\mathbb Z^2\rtimes
\operatorname{GL}(2,\mathbb Z)$. 

An alternative view is possible by means of the Herglotz--Bochner
theorem as follows. The autocorrelation measure $\gamma$ is positive
definite on $\mathbb R^2$ if and only if the function $\eta\!:\, \mathbb
Z^2\longrightarrow \mathbb R$ is positive definite on
$\mathbb Z^2$; see~\cite[Lemma 8.4]{TAO}. The latter property is
equivalent to the existence of a positive measure $\varrho$ on $\mathbb
T^2=\mathbb R^2/\mathbb Z^2\simeq [0,1)^2$ such that
$$
\eta(x)\,=\int_{\mathbb
T^2} e^{2\pi i xy}\,\,{\rm d}\varrho(y),
$$
where the connection to $\widehat\gamma$ is established by
$\varrho=\gamma\!\mid_{[0,1)^2}$, so that $\widehat\gamma=\varrho\ts * \delta_{\mathbb
  Z^2}$, where $\delta_{S} := \sum_{x\in S} \delta_{x}$ denotes
   the Dirac comb of a discrete point set $S$. The finite positive measure $\varrho$ is a spectral measure
in the sense of dynamical system theory. It is related to the
diffraction measure by convolution; for background, we refer
to~\cite{BL,BLvE} and references therein. We shall return to the
dynamical point of view shortly.\medskip

Let us pause to comment on the history and the development of this problem. The arithmetic
properties of $V$ are classic and can be found in many
places, including~\cite{Apostol,Hua}. The investigation of spectral
aspects was advertised by Schroeder in~\cite{Schroeder1}, see also~\cite{Schroeder2}, by means of a numerical
approach via FFT techniques. These results suffered from insufficient
resolution in the numerical treatment, and seemed to point towards
continuous diffraction components, perhaps in line with the idea that
the distribution of primes is sufficiently `random'.

Ten years later, on the basis of a formal M\"obius inversion
calculation for the amplitudes, Mosseri argued in~\cite{Mosseri} that the diffraction should be pure point rather than
continuous, thus contradicting the earlier numerical findings of
Schroeder. This was corroborated in~\cite{BGW} with
further calculations on the diffraction intensities (still without
proof), which gave the formula of Thm.~\ref{diff} above. Also, a
rather convincing comparison with an optical diffraction experiment
was shown, which clearly indicated the correctness of the formal
calculation. The first complete proof, with a detailed convergence
result with precise error estimates, appeared in [BMP], and was
recently improved and extended in~\cite{PH}, on the basis of
number-theoretic results
due to Mirsky~\cite{Mirsky1,Mirsky2}.

Simultaneously, due to the renewed interest in the square-free integers
in connection with Sarnak's conjecture, the dynamical sytems point of
view became more important, as is obvious
from~\cite{CS,CV,PH,HB}. Here, the focus is more on the dynamical
spectrum, which is closely related to the diffraction
measure as indicated above, and explained in detail in~\cite{BL,BLvE}.

To explain this, let us define the (discrete) hull of $V$ as
$$
\mathbb X_{V}=\overline{\{t+V\,|\,t\in\mathbb Z^2\}},
$$ 
where the closure is taken in the product topology induced on
$\{0,1\}^{\mathbb Z^2}$ by the discrete topology on $\{0,1\}$. This
topology is metric~\cite{Sol,PH} and is 
also called the \emph{local topology}, because two elements of
$\{0,1\}^{\mathbb Z^2}$ are close if they agree on a large ball around
the origin. Clearly, $\mathbb X_{V}$ is then compact, where
here and below we simultaneously view subsets of $\mathbb Z^2$ as configurations. In
particular, the empty set is identified with the configuration
$\underline 0$, and $\mathbb Z^2$ with $\underline 1$ this way. Since
$V$  contains holes of arbitrary size, the empty set is an element of
$\mathbb X_{V}$.

For
a natural number $m$, let $\cdot\,_m\!:\,\mathbb
Z^2\longrightarrow \mathbb
Z^2/m\mathbb
Z^2$ denote the canonical projection $x\mapsto  [x]_m$,
where $[x]_m=x+m\mathbb Z^2$. For a subset $X\subset \mathbb Z^2$, we
denote by $X_m$ its image under this projection map. It
should however be born in mind that, for
elements $x\in\mathbb Z^2$, their images under the above map will always be written as $[x]_m$
rather than $x_m$. This is due to the occasional need of regarding $m$
in the last expression as an
index. Let us recall the
following result from~\cite{BMP}.

\begin{prop}[Chinese Remainder Theorem]{\rm \cite[Prop.\ 2]{BMP}}\label{crt}
For pairwise coprime positive integers\/ $m_1,m_2,\ldots,m_r$, the
natural group homomorphism
$$
(\mathbb Z^2)_{m_1m_2\cdot\ldots \cdot m_r} \,\,\longrightarrow \,\,\prod_{i=1}^r
(\mathbb Z^2)_{m_i}$$
is an isomorphism.  In particular, for\/ $x_1,x_2,\ldots,x_r\in\mathbb
Z^2$, the simultaneous solutions\/ $t\in\mathbb Z^2$ of
$$
[t]_{m_i}= [x_i]_{m_i}, \quad 1\le i\le r,
$$
comprise precisely one coset of\/ $m_1m_2\cdot\ldots\cdot  m_r\mathbb Z^2$ in
$\mathbb Z^2$. \qed
\end{prop}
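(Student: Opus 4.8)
The plan is to prove the Chinese Remainder Theorem for the lattice $\mathbb{Z}^2$ by reducing it to the classical one-dimensional case applied componentwise. First I would set $M = m_1 m_2 \cdots m_r$ and consider the natural group homomorphism
$$
\varphi\!:\,(\mathbb{Z}^2)_{M}\,\longrightarrow\,\prod_{i=1}^r (\mathbb{Z}^2)_{m_i},\qquad [x]_M\longmapsto \bigl([x]_{m_1},\ldots,[x]_{m_r}\bigr),
$$
and check that it is well defined, since $m_i\mid M$ for each $i$ guarantees that the residue of $x$ modulo $m_i$ depends only on its residue modulo $M$. The map is clearly a group homomorphism because reduction modulo any integer is additive.

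To prove that $\varphi$ is an isomorphism, I would exploit the product structure $\mathbb{Z}^2 = \mathbb{Z}\times\mathbb{Z}$, under which $(\mathbb{Z}^2)_m \simeq (\mathbb{Z}/m\mathbb{Z})^2$ and $\varphi$ splits as the direct square of the classical reduction map $\psi\!:\,\mathbb{Z}/M\mathbb{Z}\to\prod_{i=1}^r \mathbb{Z}/m_i\mathbb{Z}$. The classical Chinese Remainder Theorem for the pairwise coprime integers $m_1,\ldots,m_r$ tells us that $\psi$ is an isomorphism, and hence so is $\varphi = \psi\times\psi$ on each of the two coordinates. Alternatively, and perhaps cleanest for the write-up, I would simply count: both the source and the target are finite sets of cardinality $M^2$ (using $\prod_i m_i^2 = M^2$ together with coprimality), so it suffices to establish injectivity. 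Injectivity amounts to showing that $[x]_{m_i}=[0]_{m_i}$ for all $i$ forces $[x]_M=[0]_M$; componentwise, this says that if $m_i\mid x_j$ for every $i$ and each coordinate $j\in\{1,2\}$, then $M\mid x_j$, which is exactly the statement that the least common multiple of pairwise coprime integers is their product.

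For the "in particular" clause, I would translate the isomorphism back into the language of simultaneous congruences. Given $x_1,\ldots,x_r\in\mathbb{Z}^2$, surjectivity of $\varphi$ provides at least one $t\in\mathbb{Z}^2$ with $[t]_{m_i}=[x_i]_{m_i}$ for all $i$, while injectivity shows that any two solutions differ by an element of $\ker\varphi$, which corresponds precisely to $M\mathbb{Z}^2$. Hence the solution set is a single coset $t + M\mathbb{Z}^2$ of $M\mathbb{Z}^2$ in $\mathbb{Z}^2$, as claimed.

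I do not expect any genuine obstacle here: the result is a routine lattice-valued restatement of the classical Chinese Remainder Theorem, and the only point requiring a little care is the bookkeeping that the product decomposition $\mathbb{Z}^2=\mathbb{Z}\times\mathbb{Z}$ is compatible with reduction modulo $m$, so that one may apply the scalar theorem in each coordinate independently. The mildly delicate step, if one avoids the abstract product argument and argues by counting instead, is verifying the cardinality of $(\mathbb{Z}^2)_M$ and reconciling it with $\prod_i \lvert(\mathbb{Z}^2)_{m_i}\rvert$, but this is immediate from $\lvert(\mathbb{Z}^2)_m\rvert = m^2$ and the coprimality hypothesis.
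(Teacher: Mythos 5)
Your proof is correct and is the standard argument; the paper itself offers no proof of this proposition, simply citing \cite[Prop.~2]{BMP}, where the result is established in essentially the same way (reduction of the lattice statement to the classical Chinese Remainder Theorem, with the kernel computation $\bigcap_i m_i\mathbb{Z}^2 = M\mathbb{Z}^2$ yielding the coset description). Both of your routes --- the splitting $\varphi = \psi\times\psi$ via $\mathbb{Z}^2 = \mathbb{Z}\times\mathbb{Z}$ and the counting argument using $\lvert(\mathbb{Z}^2)_M\rvert = M^2 = \prod_i m_i^2$ --- are complete and correct, so there is nothing to fix.
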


Next, let us come to a characterisation of $\mathbb X_{V}$. Let
$\mathbb A$ denote 
the set of \emph{admissible} subsets $A$
of $\mathbb Z^2$, i.e.\ subsets $A\subset\mathbb Z^2$ with the property that, 
for every prime $p$, $A$ does \emph{not} contain a full set of
representatives modulo $p\mathbb Z^2$. In other words, $A$ is
admissible if and only if 
$$|A_p|<p^2$$ for any prime $p$. Since $V\in\mathbb A$ (otherwise some point
of $V$ would be in $p\mathbb Z^2$
for some prime $p$, a contradiction) and since $\mathbb A$ is a
$\mathbb Z^2$-invariant and
closed subset of $\{0,1\}^{\mathbb Z^2}$, it is clear that $\mathbb
X_{V}$
is a subset of $\mathbb A$. By~\cite[Thm.~2]{PH}, the other inclusion is also
true. This was first shown by Herzog and Stewart~\cite{HS} for visible
lattice points and by Sarnak~\cite{Sarnak} for the analogous case of
the square-free integers. In fact, similar statements hold true for
various generalisations discussed below; cf.~\cite[Thm.~6]{PH} for the
case of $k$-free lattice points.

\begin{theorem}\label{charachull}
One has\/ $\mathbb X_{V}=\mathbb A$.\qed
\end{theorem}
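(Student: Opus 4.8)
The inclusion $\mathbb X_{V}\subseteq\mathbb A$ is already observed in the text, so the plan is to prove the reverse inclusion $\mathbb A\subseteq\mathbb X_{V}$. By the definition of the local topology it suffices to approximate: given $A\in\mathbb A$ and a radius $R>0$, I will produce a translate $t+V$ with $(t+V)\cap B_R(0)=A\cap B_R(0)$, i.e.\ a vector $t\in\mathbb Z^2$ such that, for every lattice point $x\in B_R(0)$, one has $x\in A$ if and only if $\operatorname{gcd}(x-t)=1$. Writing $O:=A\cap B_R(0)$ for the occupied points (which must become visible after the shift) and $E:=\bigl(B_R(0)\cap\mathbb Z^2\bigr)\setminus A$ for the empty points (which must become non-visible), the construction of $t$ is a simultaneous congruence problem to be handled with the Chinese remainder theorem of Proposition~\ref{crt}.

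The empty points and the small primes are the easy part. For each $e\in E$ I choose a distinct prime $p_e>2R+1$ and impose $[t]_{p_e}=[e]_{p_e}$; then $p_e\mid\operatorname{gcd}(e-t)$, so $e-t$ is non-visible. Since $p_e$ exceeds the diameter of the patch, such a congruence can never force an occupied point to share its residue with $t$ modulo $p_e$. For each prime $p\le 2R+1$, admissibility of $A$ supplies a class $c_p\in(\mathbb Z^2)_p\setminus A_p$, and I impose $[t]_p=c_p$; this guarantees $[t]_p\ne[o]_p$ for every $o\in O$, so no occupied point is blocked at a small prime. These finitely many conditions are consistent, and by Proposition~\ref{crt} they fix $t$ modulo $M:=\prod_{e\in E}p_e\cdot\prod_{p\le 2R+1}p$. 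This is the one and only place where admissibility enters.

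The crux, and the main obstacle, is to make the occupied points genuinely visible against the \emph{infinitely many} remaining primes, whereas the congruences above constrain $t$ only modulo finitely many of them. The plan is to collapse the visibility condition onto a single coordinate. I first fix $t_1$ in its prescribed class modulo $M$, chosen large enough that $d_o:=o_1-t_1\ne 0$ for all $o\in O$; then $\operatorname{gcd}(o-t)=\operatorname{gcd}(d_o,\,o_2-t_2)$, so $o-t$ is visible as soon as no prime divides both $d_o$ and $o_2-t_2$. Each $d_o$ has only finitely many prime divisors, so only finitely many primes $q$ are relevant. If such a $q$ is small ($q\le 2R+1$) or equals some $p_e$, the residues already fixed force $t_2\not\equiv o_2\pmod q$, so nothing new is needed. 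The only genuinely new constraints come from primes $q>2R+1$ with $q\nmid M$: here the key numerical fact is that the first coordinates $o_1$ lie in an interval shorter than $q$, so at most one value of $o_1$ can satisfy $q\mid d_o$. Thus at most one column contributes, producing at most $2R+1<q$ forbidden residues for $t_2\bmod q$, which leaves a free residue. A final application of Proposition~\ref{crt} selects $t_2$ meeting $t_2\equiv(\text{prescribed})\pmod M$ together with these finitely many avoidance conditions.

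With this $t$, every empty point of the patch is non-visible and every occupied point is visible after the shift, so $(t+V)\cap B_R(0)=A\cap B_R(0)$. Letting $R\to\infty$ exhibits $A$ as a local limit of translates of $V$, whence $A\in\mathbb X_{V}$. As $A\in\mathbb A$ was arbitrary, this yields $\mathbb A\subseteq\mathbb X_{V}$, which together with the reverse inclusion gives $\mathbb X_{V}=\mathbb A$. I expect the infinitely-many-primes difficulty to be the hard point, and it is precisely this that the one-coordinate reduction defuses, the decisive observation being that a prime exceeding the patch diameter can meet at most a single column.
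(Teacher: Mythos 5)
Your argument is correct, and it takes a genuinely different route from the paper's. The paper does not prove Theorem~\ref{charachull} directly: it invokes \cite[Thm.~2]{PH} (with the result going back to Herzog and Stewart \cite{HS}), where the inclusion $\mathbb A\subseteq\mathbb X_{V}$ is obtained from the stronger statement that every admissible patch occurs in $V$ with \emph{positive frequency} (Theorem~\ref{freq} above), proved by inclusion--exclusion together with Mirsky-type counting estimates; there, the infinitely many primes are tamed by a density and tail estimate rather than by exhibiting a translate. You instead construct, for each radius $R$, one explicit $t$: congruences via Proposition~\ref{crt} handle the empty points (one dedicated prime $p_e>2R+1$ each, which by the diameter bound cannot block an occupied point) and the finitely many primes $p\le 2R+1$ (the only place admissibility enters), and your one-coordinate reduction --- fix $t_1$ in its class mod $M$ with $|t_1|>R$ so that $d_o=o_1-t_1\neq 0$, note that only the finitely many prime divisors of the $d_o$ remain relevant, and observe that a prime $q>2R+1$ can meet at most one column of the patch, leaving at most $2R+1<q$ forbidden residues for $t_2\bmod q$ --- disposes of the unconstrained large primes. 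I checked the delicate points and they hold: the two CRT applications are consistent (the new primes are coprime to $M$); if $q\mid d_o$ with $q\le 2R+1$ or $q=p_e$, the already fixed residues $[t]_q\neq[o]_q$ force $q\nmid o_2-t_2$; and exact agreement on $B_R(0)$ is the right notion of convergence in the product topology. As for what each approach buys: yours is elementary, self-contained and constructive (and close in spirit to the original Herzog--Stewart argument), but it only shows that admissible patches occur somewhere; the route through \cite{PH} is analytically heavier but delivers the quantitative frequency formula, which the paper needs later anyway to define the measure $\nu$, and to establish genericity and ergodicity.
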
 

It follows that $\mathbb X_{V}$ is \emph{hereditary}, i.e.\
$$
\forall\, X\in\mathbb A:\,(Y\subset X\Rightarrow
Y\in\mathbb A),
$$
and in particular contains
\emph{all} subsets of $V$. In other
words, $V$
is an \emph{interpolating set}\/ for $\mathbb X_{V}$ in the sense
of~\cite{W}, which means that $$\mathbb X_{V}|^{}_V\,\,:=\{X\cap
V\,\mid\,
X\in\mathbb X_{V}\}=\{0,1\}^{V}.$$

Given a radius $\rho>0$ and a point $t\in\mathbb Z^2$,
the \emph{$\rho$-patch} of $V$ at $t$ is
\[(V-t)\cap B_\rho(0),\]
the translation to the origin of the part of $V$ within a distance
$\rho$ of $t$. We denote by $\mathcal A(\rho)$ the (finite) set of all
$\rho$-patches of $V$, and by $N(\rho)=|\mathcal A(\rho)|$ the number of
distinct $\rho$-patches of $V$. For a $\rho$-patch $\mathcal P$ of
$V$, denote by $C_{\mathcal P}$ the set of elements of
$\mathbb X_{V}$ whose $\rho$-patch at
$0$ is $\mathcal P$, the so-called \emph{cylinder set} defined by the
$\rho$-patch $\mathcal P$; compare~\cite{Denker}. Note that these cylinder sets form a
basis of the topology of $\mathbb X_{V}$. 

The \emph{patch counting entropy} of
$V$ is defined as
$$
h_{\rm pc}(V):=\lim_{\rho\to\infty}\frac{\log N(\rho)}{\pi\rho^2}.
$$
Note that this differs from the definition in~\cite{PH,HB}, where, in
view of the binary configuration space interpretation, we
used the base $2$ logarithm. It can be shown by a classic subadditivity argument that this limit
exists. Since $\mathbb X_{V}$ is hereditary, it follows that $V$ has patch counting entropy $h_{\rm
  pc}(V)$ at least
$\operatorname{dens}(V)\log (2)=(6/\pi^2)\log (2)$. In fact, one has more.

\begin{theorem}{\rm \cite[Thm.~3]{PH}}\label{hpc}
One has\/ $h_{\rm pc}(V)=(6/\pi^2)\log (2)$. \qed
\end{theorem}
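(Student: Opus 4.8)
The plan is to establish the two matching inequalities $h_{\rm pc}(V)\le (6/\pi^2)\log 2$ and $h_{\rm pc}(V)\ge (6/\pi^2)\log 2$; since the defining limit exists by the subadditivity argument already noted, it then equals $(6/\pi^2)\log 2$. The lower bound is essentially the one recorded just before the statement. Because $\mathbb X_{V}=\mathbb A$ by Theorem~\ref{charachull} is hereditary and $V$ is an interpolating set, for every subset $S\subseteq V\cap B_\rho(0)$ there is an $X\in\mathbb X_{V}$ with $X\cap V\cap B_\rho(0)=S$; distinct $S$ force distinct $\rho$-patches, and since the orbit $\{t+V\}$ is dense in $\mathbb X_{V}$ while patches define open cylinder sets, each such configuration is in fact realised by a translate of $V$. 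Hence $N(\rho)\ge 2^{|V\cap B_\rho(0)|}$, and with $|V\cap B_\rho(0)|=\operatorname{dens}(V)\,\pi\rho^2+o(\rho^2)$ from Proposition~\ref{propbasic}(d) this gives $\liminf_{\rho\to\infty}\frac{\log N(\rho)}{\pi\rho^2}\ge\operatorname{dens}(V)\log 2=(6/\pi^2)\log 2$. The real work is therefore the upper bound.

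For the upper bound, I would first observe that every $\rho$-patch of $V$ is an \emph{admissible} pattern on $B_\rho(0)\cap\mathbb Z^2$. Writing a patch as $(V-t)\cap B_\rho(0)=\{y\in B_\rho(0)\cap\mathbb Z^2\mid\operatorname{gcd}(y+t)=1\}$, one sees that for every prime $p$ all lattice points $y\equiv -t\pmod{p}$ are holes, since then $p\mid\operatorname{gcd}(y+t)$. Thus, for each prime $p$, the class $c_p:=[-t]_p$ modulo $p\mathbb Z^2$ is entirely empty in the patch, so $N(\rho)$ is bounded above by the number of patterns $P\colon B_\rho(0)\cap\mathbb Z^2\to\{0,1\}$ that, for every prime $p$, miss at least one residue class modulo $p$. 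It suffices to bound this count.

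To do so, fix an auxiliary threshold $P$ and, to each admissible pattern, assign for every prime $p\le P$ one empty class $c_p$. This exhibits the pattern as a pair consisting of a choice $(c_p)_{p\le P}$ together with an arbitrary assignment on the remaining \emph{allowed} sites, namely those $y\in B_\rho(0)$ with $y\not\equiv c_p\pmod p$ for all $p\le P$. The number of choices $(c_p)_{p\le P}$ is $\prod_{p\le P}p^2$, a constant independent of $\rho$. For the allowed set, the Chinese Remainder Theorem (Proposition~\ref{crt}) shows it to be a union of residue classes modulo $M_P:=\prod_{p\le P}p$ of relative density $\prod_{p\le P}(1-p^{-2})$, so its number of lattice points in the disk is at most $\pi\rho^2\prod_{p\le P}(1-p^{-2})+O_P(\rho)$, uniformly in $(c_p)$. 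Therefore
$$
\frac{\log N(\rho)}{\pi\rho^2}\,\le\,\frac{2\sum_{p\le P}\log p}{\pi\rho^2}+\Big(\prod_{p\le P}(1-p^{-2})+o(1)\Big)\log 2 ,
$$
and letting $\rho\to\infty$ for fixed $P$ yields $\limsup_{\rho\to\infty}\frac{\log N(\rho)}{\pi\rho^2}\le\big(\prod_{p\le P}(1-p^{-2})\big)\log 2$. Finally, letting $P\to\infty$ and using $\prod_p(1-p^{-2})=1/\zeta(2)=6/\pi^2$ gives $h_{\rm pc}(V)\le(6/\pi^2)\log 2$, completing the proof.

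The step I expect to be the main obstacle is the uniform control of the number of allowed sites together with the order of the two limits: one must ensure the equidistribution error is genuinely $o(\rho^2)$ uniformly over the finitely many choices of forbidden classes for each fixed $P$, and that the subsequent passage $P\to\infty$ is legitimate, which rests only on the convergence of the partial Euler products $\prod_{p\le P}(1-p^{-2})$ to $1/\zeta(2)$. A secondary point worth stating explicitly is the identification of the $\rho$-patches of $V$ with those occurring anywhere in $\mathbb X_{V}=\mathbb A$, which follows from density of the orbit of $V$ and underlies the lower bound.
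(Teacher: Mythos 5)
Your proof is correct. Note that the paper does not actually prove Theorem~\ref{hpc} internally: it is quoted from the reference [PH, Thm.~3], and the only argument in the surrounding text is the one-line remark that heredity of $\mathbb X_V$ gives the lower bound $h_{\rm pc}(V)\ge \operatorname{dens}(V)\log 2$. Your write-up supplies the full two-sided argument, in the same spirit as the cited source. The lower bound is sound: by heredity you may in fact take $X=S\cup\bigl(V\setminus B_\rho(0)\bigr)\subseteq V$, which lies in $\mathbb A=\mathbb X_V$ and has $\rho$-patch \emph{exactly} $S$ at the origin, and then density of the orbit of $V$ together with the openness of cylinder sets realises $S$ as a genuine $\rho$-patch of $V$; this is slightly cleaner than your formulation via $X\cap V\cap B_\rho(0)=S$, though your injectivity argument (distinct $S$ force patches with distinct intersections with $V$) also works. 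The upper bound is the key contribution and is carried out correctly: every patch $(V-t)\cap B_\rho(0)$ is empty on the full residue class $[-t]_p$ for each prime $p$ (including the point $y=-t$ itself, since $\gcd(0)=0$ and $0\notin V$), so for a fixed threshold $P$ each patch is determined by a choice of empty classes $(c_p)_{p\le P}$ — at most $\prod_{p\le P}p^2$ possibilities, negligible after dividing by $\pi\rho^2$ — together with its restriction to the allowed sites, whose cardinality is $\pi\rho^2\prod_{p\le P}(1-p^{-2})+O_P(\rho)$ by Proposition~\ref{crt} and the lattice-point count in a ball, uniformly over the finitely many choices of $(c_p)$. Taking $\rho\to\infty$ first and $P\to\infty$ second is the correct order, and the passage $P\to\infty$ uses only the convergence of the partial Euler products to $1/\zeta(2)$, exactly as you say. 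The one point you should make explicit (you gesture at it in your closing remark) is that the identification $N(\rho)=$ number of admissible patterns on $B_\rho(0)\cap\mathbb Z^2$, which your two bounds sandwich, rests on Theorem~\ref{charachull} together with orbit density — without $\mathbb X_V=\mathbb A$ the lower bound $N(\rho)\ge 2^{|V\cap B_\rho(0)|}$ would not follow.
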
 

The natural translational
action of the group $\mathbb Z^2$
on $\mathbb X_{V}$ is continuous and $(\mathbb
X_{V},\mathbb Z^2)$ thus is a \emph{topological dynamical
  system}. By construction, $(\mathbb X_{V},\mathbb Z^2)$ is topologically
transitive~\cite{A,G,W}, as it is the orbit closure of one of its
elements (namely $V$). Equivalently, for any two non-empty open subsets $U$
and $W$ of $\mathbb X_{V}$, there is an element $t\in\mathbb Z^2$ such
that
$$
U\cap (W+t)\neq\varnothing.
$$
In accordance with Sarnak's findings~\cite{Sarnak} for
square-free integers, one has the following result.

\begin{theorem}\label{c1}
The topological dynamical system\/ $(\mathbb X_{V},\mathbb Z^2)$ has the following properties.
\begin{itemize}
\item[\rm (a)]
$(\mathbb X_{V},\mathbb Z^2)$ is topologically ergodic with positive topological 
entropy equal to\/ $(6/\pi^2)\log (2)$.
\item[\rm (b)]
$(\mathbb X_{V},\mathbb Z^2)$ is proximal, and\/ $\{\varnothing\}$ is
the unique\/ $\mathbb Z^2$-minimal subset of\/ $\mathbb X_{V}$.
\item[\rm (c)]
$(\mathbb X_{V},\mathbb Z^2)$ has no non-trivial topological Kronecker
factor\/ $($i.e., minimal equicontinuous factor\/$)$. In particular,\/ $(\mathbb
X_{V},\mathbb Z^2)$ has trivial topological point spectrum.
\item[\rm (d)]
$(\mathbb X_{V},\mathbb Z^2)$ has a non-trivial joining with the
Kronecker system given by\/ $K=(G,\mathbb Z^2)$, where\/ $G$ is the compact
Abelian group\/ $\prod_p (\mathbb Z^2)_p$ and\/ $\mathbb Z^2$ acts on\/ $G$
via addition of $\iota(x)=([x]_p)$, i.e.\  $g\mapsto
g+\iota(x)$, with\/ $g\in G$ and $x\in\mathbb Z^2$. In
particular,\/ $(\mathbb X_{V},\mathbb Z^2)$ fails to be topologically weakly mixing.
\end{itemize}
\end{theorem}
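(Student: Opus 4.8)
The plan is to treat the four assertions in turn, using throughout the identification $\mathbb{X}_V=\mathbb{A}$ from Theorem~\ref{charachull} and the heredity it yields, together with the Chinese Remainder Theorem (Proposition~\ref{crt}). For part (a), I would first observe that $(\mathbb{X}_V,\mathbb{Z}^2)$ is a subshift over the alphabet $\{0,1\}$, so its topological entropy equals the exponential growth rate of the number of admissible $\rho$-patches per unit area, which is exactly $h_{\rm pc}(V)$; by Theorem~\ref{hpc} this is $(6/\pi^2)\log(2)>0$. For topological ergodicity I would show that, for any two cylinder sets $U=C_{\mathcal P}$ and $W=C_{\mathcal Q}$ given by admissible $\rho$-patches with occupied points $P^{+},Q^{+}\subseteq B_\rho(0)$, the return-time set $\{t:U\cap(W+t)\neq\varnothing\}$ has positive density. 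Once $|t|$ is large enough that the two windows are disjoint, it suffices by heredity to place the single configuration $Z=P^{+}\cup(Q^{+}+t)$ into $\mathbb{A}$, so only admissibility of $Z$ is at issue. For every prime $p>|P^{+}|+|Q^{+}|$ one has $|Z_p|<p^2$ automatically, and for each of the finitely many remaining primes a fixed class $k\notin P^{+}_p$ stays unoccupied whenever $[t]_p\notin k-Q^{+}_p$, so a proportion at least $1-|Q^{+}_p|/p^2>0$ of residues $[t]_p$ is admissible. Combining these congruence conditions through Proposition~\ref{crt}, the good $t$ form a finite union of cosets of positive density, which gives topological ergodicity.

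For part (b), the geometric input is that any two admissible sets have common holes of arbitrary size. If $X,Y\in\mathbb{A}$ miss the classes $[c^{X}_p]_p$ and $[c^{Y}_p]_p$ modulo $p$, then to empty both of them on a ball $B_R(0)$ I would assign to each lattice point $w\in B_R(0)$ a pair of distinct primes $(p_w,q_w)$, all distinct across $w$ and across the pair, and use Proposition~\ref{crt} to solve $[z+w]_{p_w}=[c^{X}_{p_w}]_{p_w}$ and $[z+w]_{q_w}=[c^{Y}_{q_w}]_{q_w}$ simultaneously; then $(X-z)\cap B_R(0)=(Y-z)\cap B_R(0)=\varnothing$. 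Sending $R\to\infty$ puts the diagonal point $(\varnothing,\varnothing)=(\underline 0,\underline 0)$ into the orbit closure of $(X,Y)$ in $\mathbb{X}_V\times\mathbb{X}_V$, so every pair is proximal. Taking $Y=X$ shows $\underline 0\in\overline{\{X+t\}}$ for each $X$, whence every minimal set contains the fixed point $\underline 0$ and so equals $\{\varnothing\}$. Part (c) is then soft: any equicontinuous system is distal, a factor map onto one sends proximal pairs to equal pairs, and since \emph{all} pairs of $\mathbb{X}_V$ are proximal such a map is constant; thus the maximal equicontinuous factor is trivial and, equivalently, a non-constant continuous eigenfunction (which would produce a rotation factor) cannot exist, so the topological point spectrum is trivial.

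For part (d) I would realise the joining as the orbit closure $J=\overline{\{(V+t,\iota(t)):t\in\mathbb{Z}^2\}}\subseteq\mathbb{X}_V\times G$ under the diagonal action. It projects onto $\mathbb{X}_V$ because $V$ is a transitive point, and onto $G$ because $\iota(\mathbb{Z}^2)$ is dense by Proposition~\ref{crt}, so $J$ is a joining with the minimal equicontinuous system $K$. To see that it is non-trivial I would use the membership relation $x\in V+t\Leftrightarrow\forall p:[x]_p\neq[t]_p$ and pass to the limit: if $(V+t_n,\iota(t_n))\to(X,g)$, then for fixed $x\in X$ one has $x\in V+t_n$ for large $n$, hence $[x]_p\neq[t_n]_p$ for every $p$; fixing $p$ and using $[t_n]_p\to g_p$ gives $[x]_p\neq g_p$, so $X\subseteq X_g:=\{x:\forall p,\,[x]_p\neq g_p\}$. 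Since $X_0=V$, the fibre of $J$ over $0\in G$ consists of subsets of $V$, so $(\{(2,0)\},0)\notin J$ and thus $J\subsetneq\mathbb{X}_V\times G$. Finally, invoking that a topologically weakly mixing system is disjoint from every minimal equicontinuous (indeed distal) system, the presence of this non-trivial joining with $K$ forces $(\mathbb{X}_V,\mathbb{Z}^2)$ to fail topological weak mixing.

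The step I expect to be the main obstacle is exactly this last implication in (d), together with its reconciliation with (c). Because the system has trivial topological point spectrum and no equicontinuous factor, the failure of weak mixing cannot be detected by any continuous eigenfunction; it is visible only at the level of joinings and reflects the measure-theoretic Kronecker factor (and the pure point diffraction of Theorem~\ref{diff}) rather than a topological factor. Getting the quantifier order right in the limit argument that yields $X\subseteq X_g$, and thereby the properness of $J$, and then applying the correct disjointness characterisation of topological weak mixing in this non-minimal, hereditary setting, is where the real care is needed; by comparison the positive-density CRT counting in (a) and the simultaneous-hole construction in (b) are the principal technical but essentially routine computations.
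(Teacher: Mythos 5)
Your proposal is correct, and on three of the four parts it takes a genuinely different route from the paper. For (a), the paper does not prove topological ergodicity directly: it derives it from the existence of the full ergodic invariant measure $\nu$ (Theorems~\ref{freq} and~\ref{c2}(b), whose ergodicity is the substance of Appendix~\ref{appa}), whereas your CRT argument --- placing the finite configuration $P^{+}\cup(Q^{+}+t)$ into $\mathbb A$ by forcing, at each small prime, an unoccupied class of $\mathcal P$ to stay unoccupied --- is direct, measure-free, and in fact produces a union of cosets of a full-rank sublattice of good $t$, hence syndetic return-time sets, which is stronger than needed; the paper's detour buys the ergodicity of $\nu$, which it needs later anyway. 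For (b), the paper extracts lattice-periodically repeating holes in every hull element by a compactness argument from Proposition~\ref{propbasic}(a) and then matches a $\rho'$-hole of $Y$ with a $\rho$-hole of $X$ via a fundamental-domain covering; your construction instead assigns a distinct pair of primes to each lattice point of $B_R(0)$ and solves the resulting congruences by Proposition~\ref{crt}, emptying both configurations on $B_R(0)$ simultaneously --- self-contained and free of the subsequence bookkeeping, at the (harmless) cost of using $2|B_R(0)\cap\mathbb Z^2|$ primes. Part (c) coincides with the paper's argument. For (d), the paper exhibits the joining $W=\bigcup_X\{X\}\times\prod_p(\mathbb Z^2\setminus X)_p$ --- which must be read as the set of pairs $(X,g)$ with $g_p\notin X_p$ for all $p$, i.e.\ exactly your condition $X\subseteq X_g$ --- and leaves the verification to the reader; your $J=\overline{\{(V+t,\iota(t))\,:\,t\in\mathbb Z^2\}}$ is the orbit closure of $(V,0)$, is contained in $W$, and your quantifier-careful limit argument giving $X\subseteq X_g$, with $(\{(2,0)\},0)$ witnessing properness, supplies in full detail what the paper asserts with ``one can verify''. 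The closing step is identical in both: Furstenberg's disjointness theorem \cite[Thm.~II.3]{F}. Your concluding worry is unfounded: that theorem only requires the distal system $K$ to be minimal, which it is, so the non-minimality of $\mathbb X_V$ causes no difficulty, exactly as in the paper.
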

\begin{proof}
The topological entropy of the dynamical system
$(\mathbb X_{V},\mathbb Z^2)$ is just $h_{\rm pc}(V)$, so the assertion
follows from Theorem~\ref{hpc}; cf.~\cite[Thm.~1]{BLR}. 
The topological ergodicity~\cite{A,G} will follow from the existence
of an ergodic full (non-empty open subsets have positive measure) $\mathbb Z^2$-invariant Borel measure on $\mathbb X_{V}$;
see~Theorems~\ref{freq} and~\ref{c2}(b) below.

For part (b), recall from Theorem~\ref{charachull} that the hull
contains many more elements than the translates of $V$. Nevertheless, one can derive from
Proposition~\ref{propbasic}(a) that every element of $\mathbb
X_{V}$ contains holes of arbitrary size
that repeat lattice-periodically. This follows by standard compactness
arguments from considering a
sequence of the form $(t_n+V)^{}_{n\in\mathbb N}$ that converges in the local
topology, via selecting suitable subsequences. In particular, let
$X,Y\in\mathbb X_V$ and a radius $\rho$ be fixed. Let $t_\rho+\varGamma$ be
 positions of holes of inradius $\rho$ in $X$. Choose $\rho'$ large
 enough such that $B_{\rho'}(0)$ covers $B_{\rho}(0)+F$, where $F$ is
 a fundamental domain of $\varGamma$. Then, any $\rho'$-hole of $Y$
 (which exists) contains a $\rho$-hole of $X$. Hence, for any $\rho>0$
and any two
elements $X,Y\in\mathbb
X_{V}$, there is a translation $t\in\mathbb Z^2$ such that
$$(X+t)\cap B_\rho(0)=(Y+t)\cap B_\rho(0)=\varnothing,$$ meaning that both $X$ and $Y$
have the empty $\rho$-patch at $-t$. In terms of the metric $d$ on
$\mathbb X_V$~\cite{Sol,PH,HB} this means
that $d(X+t,Y+t)\le
1/\rho$ and the proximality of the system follows. Similarly, the assertion on the unique $\mathbb Z^2$-minimal
subset $\{\varnothing\}$ follows from the fact that any element of $\mathbb
X_{V}$ contains arbitrarily large holes and thus any non-empty closed 
subsytem contains $\varnothing$. 

Since Kronecker systems are distal, the first assertion of part (c) is an immediate consequence of the
proximality of $(\mathbb X_{V},\mathbb Z^2)$. This also 
implies that $(\mathbb X_{V},\mathbb Z^2)$ has trivial
topological point spectrum; see~\cite{HB} for an alternative argument
that the non-zero constant function is the only continuous eigenfunction of the
translation action.

For part (d), one can verify that a non-trivial (topological) joining~\cite{G} 
of $(\mathbb X_{V},\mathbb Z^2)$ with the Kronecker system $K$ is given
by 
$$
W:=\bigcup_{X\in\mathbb X_{V}}\Big(\{X\}\times \prod_p
\bigl(\mathbb Z^2\setminus X\bigr)_p\Big).
$$
Since the Kronecker system $K$ is minimal and distal, a well-known
disjointness theorem by Furstenberg~\cite[Thm.~II.3]{F} implies that
$(\mathbb X_{V},\mathbb Z^2)$ fails to be topologically weakly mixing.
\end{proof}

Following~\cite{BMP,PH}, the natural \emph{frequency} $\nu(\mathcal P)$
of a $\rho$-patch $\mathcal P$ of $V$ is defined as
\begin{equation}\label{freqdef}
\nu(\mathcal
P):=\operatorname{dens}\big(\{t\in\mathbb Z^2\,\mid\,(V-t)\cap B_\rho(0)=\mathcal P\}\big),
\end{equation}
which can indeed be seen to exist. 

\begin{theorem}{\rm \cite[Thms.~1 and~2]{PH}}\label{freq}
Any\/ $\rho$-patch\/ $\mathcal P$ of\/ $V$ occurs with positive
frequency, which is given by
\[\nu(\mathcal P)=\sum_{\mathcal F\subset (\mathbb Z^2\cap B_{\rho}(0))\setminus \mathcal P}(-1)^{|\mathcal F|}
\prod_p\left(1-\frac{|(\mathcal P\cup\mathcal
    F)_p|}{p^{2}}\right).\]
\qed
\end{theorem}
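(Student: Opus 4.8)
The plan is to compute the frequency $\nu(\mathcal P)$ directly from its definition~\eqref{freqdef} by an inclusion–exclusion argument, exploiting the characterisation $\mathbb X_V=\mathbb A$ from Theorem~\ref{charachull} together with the Chinese Remainder Theorem (Proposition~\ref{crt}). The key observation is that the condition $(V-t)\cap B_\rho(0)=\mathcal P$ splits into two parts: a \emph{positive} requirement that every point of $\mathcal P$ lie in $V-t$, i.e.\ $\mathcal P+t\subset V$, and a \emph{negative} requirement that no other lattice point of $B_\rho(0)$ lie in $V-t$, i.e.\ that each point of $(\mathbb Z^2\cap B_\rho(0))\setminus\mathcal P$, when translated by $t$, be invisible. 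I would first rewrite the defining event as an intersection of the events $\{t : \mathcal P+t\subset V\}$ and $\bigcap_{y}\{t : y+t\notin V\}$ ranging over $y\in(\mathbb Z^2\cap B_\rho(0))\setminus\mathcal P$, and then apply inclusion–exclusion to the negative constraints. This produces the alternating sum $\sum_{\mathcal F}(-1)^{|\mathcal F|}$ over subsets $\mathcal F$ of the complement, where each term is the density of $\{t : (\mathcal P\cup\mathcal F)+t\subset V\}$.

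The central step is therefore to establish the product formula
\[
\operatorname{dens}\big(\{t\in\mathbb Z^2 : (\mathcal Q+t)\subset V\}\big)
\,=\,\prod_p\Big(1-\frac{|\mathcal Q_p|}{p^2}\Big)
\]
for any finite $\mathcal Q\subset\mathbb Z^2$. Here the point is that $\mathcal Q+t\subset V$ means that, for every prime $p$, none of the points $q+t$ with $q\in\mathcal Q$ lies in $p\mathbb Z^2$; modulo $p$, this is the condition $[t]_p\notin-\mathcal Q_p$, which excludes exactly $|\mathcal Q_p|$ of the $p^2$ residue classes in $(\mathbb Z^2)_p$. The conditions for distinct primes are independent by the Chinese Remainder Theorem (Proposition~\ref{crt}), so the local densities $1-|\mathcal Q_p|/p^2$ multiply. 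Because $\mathcal Q$ is finite, $\mathcal Q_p$ has full cardinality $|\mathcal Q|$ for all but finitely many $p$, and the convergence of the infinite product is governed by $\sum_p |\mathcal Q|/p^2<\infty$; I would invoke the standard Mirsky-type density results cited in the excerpt (\cite{Mirsky1,Mirsky2}, as used in~\cite{PH}) to justify that the natural density exists and equals this product, the subtle part being the uniform control of the tail over infinitely many primes.

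I expect the main obstacle to be exactly this last justification: passing from the finite-prime local densities to the genuine natural density requires showing that the contribution of large primes is negligible \emph{uniformly} in the radius $R$ of the averaging ball, i.e.\ that one may interchange the limit $R\to\infty$ with the infinite product. This is where the positivity of $\nu(\mathcal P)$ also becomes a genuine assertion rather than a formality: one must check that none of the factors $1-|(\mathcal P\cup\mathcal F)_p|/p^2$ forces the whole product to vanish, which follows because admissibility ($\mathbb X_V=\mathbb A$) guarantees $|\mathcal P_p|<p^2$ for every $p$, and the alternating sum over $\mathcal F$ can be reorganised, again via inclusion–exclusion at each prime separately, into a convergent product with strictly positive value. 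Once the density formula for a single set $\mathcal Q$ is in hand, substituting $\mathcal Q=\mathcal P\cup\mathcal F$ and collecting the signs yields the stated expression, and the finitely many nontrivial prime factors make the positivity transparent.
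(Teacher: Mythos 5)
The paper itself does not prove Theorem~\ref{freq}: it is quoted, with the closing \qed, from \cite[Thms.~1 and~2]{PH} (building on \cite{BMP}). Your outline reconstructs essentially the strategy of that cited proof: split the event $(V-t)\cap B_\rho(0)=\mathcal P$ into the containment condition $\mathcal P+t\subset V$ and the exclusion conditions at the points of $(\mathbb Z^2\cap B_\rho(0))\setminus\mathcal P$, apply finite inclusion--exclusion to the latter, and reduce everything to the Mirsky-type statement that $\operatorname{dens}\{t\in\mathbb Z^2 : \mathcal Q+t\subset V\}=\prod_p\bigl(1-|\mathcal Q_p|/p^2\bigr)$ for finite $\mathcal Q$, where the conditions at distinct primes are decoupled by Proposition~\ref{crt} and the genuinely analytic content --- the uniform (in the averaging radius $R$) control of the tail over large primes --- is carried by the estimates of \cite{Mirsky1,Mirsky2}, exactly as in \cite{PH,BMP}. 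You correctly identify this interchange of $R\to\infty$ with the infinite product as the crux and delegate it to the cited results, which is legitimate given that the theorem is itself quoted. Note also that your route runs in the opposite direction to the only in-paper computation surrounding this formula: Corollary~\ref{nub} in Appendix~\ref{appa} \emph{deduces} $\nu(B_{\mathcal P})=\prod_p\bigl(1-|\mathcal P_p|/p^2\bigr)$ from Theorem~\ref{freq} by binomial cancellation, whereas you prove the product formula for containment events first and obtain the cylinder frequencies from it --- which is how \cite{PH} proceeds.

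One step as you phrase it would not survive scrutiny: positivity. The alternating sum cannot be ``reorganised, via inclusion--exclusion at each prime separately, into a convergent product'', because the exclusion conditions are existential over primes ($y+t\notin V$ means that \emph{some} $p$ divides $\operatorname{gcd}(y+t)$) and hence do not factor over $p$; already with a single excluded point $y$ the frequency is $\prod_p a_p-\prod_p b_p$ with $a_p=1-|\mathcal P_p|/p^2$ and $b_p=1-|(\mathcal P\cup\{y\})_p|/p^2$, which is not a product over primes. The standard repair (as in \cite{PH}) is to truncate at the primes $p\le N$: by Proposition~\ref{crt}, the truncated alternating sum is the exact proportion of residue classes of $t$ modulo $\prod_{p\le N}p$ satisfying $[t]_p\notin-\mathcal P_p$ for all $p\le N$ and, for each excluded point $y$, $y+t\equiv 0$ modulo some $p\le N$. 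Choosing one prime $p_y>2\rho$ per excluded point (so that $[-y]_{p_y}\notin-\mathcal P_{p_y}$ is automatic) and using admissibility $|\mathcal P_p|<p^2$ at the remaining small primes exhibits a full coset of such residues, and the convergent tail $\prod_{p>N}\bigl(1-|\mathcal P_p|/p^2\bigr)>0$ then yields a positive lower bound uniform in $N$. With that replacement for your positivity paragraph, and with the density-existence step resting on Mirsky as indicated, your proof outline is sound and agrees with the argument the paper invokes.
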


The frequency function $\nu$ from~\eqref{freqdef}, regarded as a function on the
cylinder sets by setting $\nu(C_\mathcal
P):=\nu(\mathcal P)$, is finitely additive on the
cylinder sets with $$\sum_{\mathcal P\in\mathcal
  A(\rho)}\nu(C_{\mathcal P})=\frac{1}{|\det(\mathbb Z^2)|}=1.$$ Since the
family of cylinder sets is a (countable) semi-algebra that generates the
Borel $\sigma$-algebra on $\mathbb X_{V}$ (i.e.\ the smallest 
$\sigma$-algebra on $\mathbb X_{V}$ which contains the open subsets of
$\mathbb X_{V}$), $\nu$ extends uniquely to a probability measure on
$\mathbb X_{V}$; compare~\cite[Prop. 8.2]{Denker} and references given
there. Moreover, this probability measure is
$\mathbb Z^2$-invariant by construction wherefore we have a
measure-theoretic dynamical system $(\mathbb X_{V},\mathbb Z^2,\nu)$. For part (b) of the following
claim, note that 
the Fourier--Bohr spectrum of $V$ is itself a group and
compare~\cite[Prop. 17]{BLvE}.

\begin{theorem} \label{c2}
The measure-theoretic
dynamical system\/ $(\mathbb X_{V},\mathbb Z^2,\nu)$ has the following properties.
\begin{itemize}
\item[\rm (a)]
The\/ $\mathbb Z^2$-orbit of\/ $V$ in\/ $\mathbb X_{V}$ is\/ $\nu$-equidistributed,
which means that for any function\/ $f\in C(\mathbb X_{V})$, one has
\[
\lim_{R\to\infty}\frac{1}{\pi R^2}\sum_{x\in\mathbb Z^2\cap
  B_R(0)}f(V+x)=\int_{\mathbb X_{V}}f(X)\,\,{\rm d}\nu(X).
\]
In other words,\/ $V$ is\/ $\nu$-generic.
\item[\rm (b)]
$(\mathbb X_{V},\mathbb Z^2,\nu)$ is ergodic, deterministic
$($i.e., it is of zero measure entropy\/$)$ and has pure point
dynamical spectrum. The latter is given by
the Fourier--Bohr spectrum of the autocorrelation\/ $\gamma$, as
described in Theorem~$\ref{diff}$.
\item[\rm (c)]
$(\mathbb X_{V},\mathbb Z^2,\nu)$ is metrically 
isomorphic to the Kronecker system\/ $K_{\mu}=(G,\mathbb Z^2,\mu)$, where\/ $G$ is the
compact Abelian 
group\/ $\prod_p (\mathbb Z^2)_p$, the lattice\/ $\mathbb Z^2$ acts on\/ $G$
 as
above and
$\mu$ is the normalised 
Haar measure on\/ $G$.
\end{itemize}
\end{theorem}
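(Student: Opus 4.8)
The plan is to establish the three assertions of Theorem~\ref{c2} in sequence, using the structural results already obtained for the topological system together with the explicit frequency formula of Theorem~\ref{freq}.

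For part~(a), I would exploit the fact that $\nu$ is by construction the measure assigning to each cylinder set $C_{\mathcal P}$ the patch frequency $\nu(\mathcal P)$ from~\eqref{freqdef}. The $\nu$-equidistribution of the $\mathbb Z^2$-orbit of $V$ then amounts to a weak-$*$ convergence statement: I would first verify the averaging identity for indicator functions of cylinder sets, where it reduces directly to the existence of the patch frequencies, and then extend to all $f\in C(\mathbb X_V)$ by a density argument, since finite linear combinations of cylinder indicators are dense in $C(\mathbb X_V)$ by the Stone--Weierstrass theorem (the cylinder sets form a basis of the topology). The only subtlety is uniformity of the limits across patches, but this is controlled by the finiteness of $\mathcal A(\rho)$ for each fixed $\rho$.

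For parts~(b) and~(c), the natural strategy is to prove~(c) first and then read off~(b) as a consequence, since the Kronecker system $K_\mu=(G,\mathbb Z^2,\mu)$ has pure point spectrum, zero entropy and is ergodic, all of which transfer across a metric isomorphism. To construct the isomorphism, I would use the joining $W$ already exhibited in the proof of Theorem~\ref{c1}(d) and promote it to a measure-theoretic setting: equip $W$ with the relatively invariant measure projecting to $\nu$ on the first factor and to Haar measure $\mu$ on $G$, and then show that the projection $W\to G$ is a.e.\ injective, so that $W$ is in fact the graph of an isomorphism. Concretely, the map should send $\nu$-almost every $X\in\mathbb X_V$ to the element $g\in G$ encoding, for each prime $p$, the unique residue class $[x]_p$ that $X$ avoids modulo $p\mathbb Z^2$; the Chinese Remainder Theorem (Proposition~\ref{crt}) guarantees compatibility of these local data across different primes. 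The content is that, for $\nu$-typical $X$, the reductions $X_p$ omit exactly one class modulo $p\mathbb Z^2$ for every $p$, which makes this assignment well-defined and invertible.

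The main obstacle will be verifying that this coding map is a genuine metric isomorphism rather than merely a factor map, i.e.\ proving a.e.\ injectivity and that $\nu$ pushes forward precisely to Haar measure $\mu$. The crux is to show that, with respect to $\nu$, the local configurations $X_p$ are independent across primes and each is uniformly distributed over the maximal admissible sets (those omitting a single residue class), which one computes from the product structure of the frequency formula in Theorem~\ref{freq}: the factor $\prod_p(1-|(\mathcal P\cup\mathcal F)_p|/p^2)$ manifestly decouples over primes, and this factorisation is exactly what forces the pushforward to be Haar measure on the product group $G=\prod_p(\mathbb Z^2)_p$. Once the isomorphism with the Kronecker system is in hand, pure point dynamical spectrum, ergodicity and determinism in~(b) follow automatically, and the identification of the dynamical spectrum with the Fourier--Bohr spectrum of $\gamma$ is obtained by matching the eigenvalues of $K_\mu$ with the points of $\mathbb Q^2$ of square-free denominator appearing in Theorem~\ref{diff}.
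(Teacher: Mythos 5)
Your part~(a) coincides with the paper's argument (cylinder indicators span a dense subalgebra of $C(\mathbb X_V)$, and for these the statement is the definition of the patch frequencies), but your plan for (b) and (c) inverts the paper's logical order in a way that creates a genuine gap. The paper first proves ergodicity of $\nu$ \emph{directly}, by verifying $\lim_R \frac{1}{\pi R^2}\sum_{x}\nu\big((C_\mathcal P+x)\cap C_\mathcal Q\big)=\nu(C_\mathcal P)\nu(C_\mathcal Q)$ through a M\"obius-function computation over the semi-algebra of sets $B_\mathcal P$ (Appendix~A, Lemmas~\ref{lambda} and~\ref{lambda2}), obtains pure point dynamical spectrum from the diffraction result via the equivalence theorem of Baake--Lenz together with the $\nu$-genericity from (a), and only then gets (c), either abstractly via Halmos--von Neumann or via the explicit coding map $\theta$ of Appendix~B. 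Crucially, the paper's proof that $\theta$ is well defined $\nu$-almost everywhere --- i.e.\ that $\nu(\mathbb A_1)=1$, where $\mathbb A_1$ is the set of configurations omitting \emph{exactly one} coset mod $p$ for every prime $p$ --- \emph{uses ergodicity}: almost every $X$ has a dense orbit, hence contains translated copies of arbitrarily large patches of $V$, forcing $|X_p|=p^2-1$. Since you propose to derive ergodicity \emph{from} the isomorphism, you cannot borrow this argument, and your substitute --- that the product structure of the frequency formula ``manifestly'' yields independence across primes and uniform distribution over the maximal local configurations --- does not deliver it. The frequency formula (and Corollary~\ref{nub}) only controls finite-patch events such as $B_\mathcal P$; the event $|X_p|=p^2-1$ for all $p$ is a tail event about the infinite configuration, and no finite inclusion--exclusion decoupling computes its measure without a further limiting argument that you have not supplied.

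A second, related defect: even granting $\nu(\mathbb A_1)=1$ and that $\theta$ pushes $\nu$ forward to Haar measure $\mu$, this does not give almost-everywhere injectivity. The hull is hereditary, so $\mathbb A_1$ contains an abundance of non-maximal configurations $X\subsetneq\varphi(\theta(X))$ sharing the same $\theta$-image; the paper excludes these by showing $\nu(\mathbb A^*_1)=1$ for the set of \emph{maximal} elements, which again rests on the measure-preservation of $\varphi$ ($\nu(\varphi(\mathbb T))=1$) combined with $\nu(\mathbb A_1)=1$. Your proposal conflates ``pushforward is Haar'' with ``graph property''. There is, incidentally, a clean repair available that stays close to your architecture: Corollary~\ref{nub} alone gives $\mu\big(\varphi^{-1}(B_\mathcal P)\big)=\prod_p\big(1-|\mathcal P_p|/p^2\big)=\nu(B_\mathcal P)$ on a generating semi-algebra, hence $\varphi_*\mu=\nu$ with no ergodicity input; since $\iota(\mathbb Z^2)$ is dense in $G$ (Chinese Remainder Theorem), the Kronecker system is ergodic, and $(\mathbb X_V,\mathbb Z^2,\nu)$ inherits ergodicity as a factor. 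With ergodicity secured this way, the Appendix~B argument ($\nu(\mathbb A_1)=1$, then $\nu(\mathbb A^*_1)=1$, then $\theta$ inverts $\varphi$ a.e.) goes through and your isomorphism-first order of deduction becomes legitimate --- but as written, your crux step fails and the plan is circular.
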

\begin{proof}
For part (a), it suffices to show this for the characteristic
functions of cylinder sets of finite patches, as their span is dense
in $C(\mathbb X_{V})$. But for such functions, the claim is clear as
the left hand side is the patch frequency as used for the definition
of the measure $\nu$.

For the ergodicity of $(\mathbb X_{V},\mathbb Z^2,\nu)$, one has to show
that
$$
\lim_{R\rightarrow\infty}\frac{1}{\pi R^2}\sum_{x\in\mathbb Z^2\cap
  B_R(0)}\nu\big((C_\mathcal P+x)\cap C_\mathcal Q\big)=\nu(C_\mathcal P)\nu(C_\mathcal Q)
$$
for arbitrary cylinder sets $C_\mathcal P$ and $C_\mathcal Q$;
compare~\cite[Thm.~1.17]{Walters}. The latter in turn follows from a
straightforward (but lengthy) calculation using
Theorem~\ref{freq} and the definition of the measure $\nu$ together
with the Chinese Remainder Theorem. In fact, for technical
reasons, it is better to work with a different semi-algebra that also 
generates the Borel $\sigma$-algebra on $\mathbb X_{V}$; see Appendix~\ref{appa} for the
details.

Vanishing measure-theoretical entropy, i.e.\
$$
h_{\rm meas}(V)=\lim_{\rho\to\infty}
\frac{1}{\pi\rho^2}\sum_{\mathcal P\in\mathcal
  A(\rho)}\!\!\!-\nu(C_\mathcal P)\log \nu(C_\mathcal P)=0,
$$
was shown
in~\cite[Thm.~4]{PH}, which is in line with the results
of~\cite{BLR}. Alternatively, it is an immediate consequence of part
(c) above. As a consequence of part (a), the individual
diffraction measure of $V$ according to Theorem~\ref{diff} coincides
with the diffraction measure of the system $(\mathbb
X_V,\mathbb Z^2,\nu)$ in the sense of~\cite{BL}. Then, pure point
diffraction means pure point dynamical spectrum~\cite[Thm.~7]{BL},
and the latter is the group generated by the Fourier--Bohr spectrum;
compare~\cite[Thm.~8]{BL} and~\cite[Prop. 17]{BLvE}. Since the intensity
formula of Theorem~\ref{diff} shows that there are no extinctions,
the Fourier--Bohr spectrum here is itself a group, which completes
part (b).

It is well known that $K_{\mu}=(G,\mathbb Z^2,\mu)$ has the same pure
point 
dynamical spectrum as $(\mathbb
X_V,\mathbb Z^2,\nu)$; compare~\cite{CS} for the details in the case
of the square-free integers. In particular, the subgroup of $\mathbb
T^2$ given by the points of
$\mathbb Q^2\cap [0,1)^2$ with square-free denominator is easily seen to be isomorphic
to the direct sum $\bigoplus_p \mathbb Z^2/p\mathbb Z^2$, wherefore it is
the Pontryagin dual of the
direct product 
$G=\prod_p \mathbb Z^2/p\mathbb Z^2$; cf.~\cite[Sec.\ 2.2]{Rudin}. By a theorem of von Neumann~\cite{vNeumann}, two ergodic measure-preserving transformations
with pure point dynamical spectrum are isomomorphic if and only if
they have the same dynamical spectrum. This proves part
(c), which is a particular instance of the Halmos--von Neumann
theorem; cf.~\cite{HvN}.

Alternatively, the Kronecker system can be read off from the model set
description, which also provides the compact Abelian group. The general formalism is developed in~\cite{BLM}, though
the torus parametrisation does not immediately apply. Some extra work
is required here to establish the precise properties of the
measure-theoretic 
homomorphism onto the compact Abelian group. Diagrammatically, 
the construction looks like this:
\[\begin{array}{ccccc}
\mathbb Z^2&\longleftarrow&\mathbb Z^2\times \prod_p (\mathbb Z^2)_p &\longrightarrow&\prod_p (\mathbb Z^2)_p\\
\cup&&\cup&&\cup\\ M&&L&&W\\ \uparrow&&\uparrow&&\uparrow\\
x&\longleftrightarrow&(x,\iota(x))&\longrightarrow&\iota(x)
\end{array}\]
Here $$L:=\big\{(x,\iota(x))\,\big|\,x\in\mathbb Z^2\big\}=\big\{(x,([x]_p))\,\big|\,x\in\mathbb Z^2\big\}$$ is the natural (diagonal) embedding of $\mathbb Z^2$ into $\mathbb
Z^2\times \prod_p (\mathbb Z^2)_p$ and 
$$
W:=\prod_p \big((\mathbb Z^2)_p\setminus\{[0]_p\}\big)
$$
satisfies $W=\partial W$ and has measure $\mu(W)=\prod_p(1-\tfrac{1}{p^2})=1/\zeta(2)$ with respect to the
normalised Haar measure $\mu$ on the compact group $\prod_p (\mathbb
Z^2)_p$. Clearly, one has
$$
M:=M(W):=\{x\in\mathbb Z^2\,|\,\iota(x)\in W\}=V
$$
The above diagram is in fact a \emph{cut and project scheme}\/: $L$ is a
lattice (a discrete co-compact subgroup) in $\mathbb Z^2\times \prod_p (\mathbb Z^2)_p$
 with one-to-one projection onto the first
factor and dense projection onto the second factor. This means that $V$ is a \emph{weak model
set}~\cite{TAO}. The corresponding `torus' is
$$
\mathbb T\,:=\,\big(\mathbb Z^2\times \prod_p (\mathbb Z^2)_p\big)\big/L\,\,\simeq \,\,\prod_p(\mathbb Z^2)_p,
$$
with the $\mathbb Z^2$-action given by addition of
$\iota(x)=([x]_p)$. A similar construction with translations from the group
$\mathbb R^2$ in mind is
given in~\cite{BMP}; see also~\cite[Ch.\ 5a]{Sing}.

The so-called \emph{torus parametrisation}~\cite{Moody} is the Borel map
$$
\varphi\!:\, \mathbb T\rightarrow \mathbb X_V,
$$  
given by 
$$
([y_p]_p)\mapsto M\big(([y_p]_p)+W\big).
$$
Clearly, $\varphi$ intertwines the $\mathbb Z^2$-actions. Note that, since $\mathbb X_V=\mathbb A$, one indeed has
$$
M(([y_p]_p)+W)=\mathbb Z^2\setminus\bigcup_p (y_p+p\mathbb Z^2)\in\mathbb X_V.
$$
The map $\varphi$ fails to be injective. For example, the fibre $\varphi^{-1}(\varnothing)$ over
the empty set $\varnothing$ is easily seen to be
uncountable. Furthermore, $\varphi$ is not continuous, since, e.g.,
$\varphi(([(0,0)]_p))=V$ but $$V\owns (1,0)\not\in\varphi\big(([(0,0)]_{p_1},\dots,[(0,0)]_{p_{n-1}},[(1,0)]_{p_n},[(0,0)]_{p_{n+1}},\dots)\big).$$ Nevertheless, 
employing the ergodicity of the measure $\nu$, one can show that
$\varphi$ is in fact a measure-theoretic isomorphism between the two systems; see
Appendix~\ref{appb} for details.
\end{proof}

Let us mention that our approach is complementary to that
of~\cite{CS}. There, ergodicity and pure point spectrum are consequences of
determining all eigenfunctions, then concluding via $1$ being a simple
eigenvalue and via the basis property of the eigenfunctions. Here, we
establish the $\nu$-genericity of $V$ and the ergodicity of the measure $\nu$ and afterwards use the equivalence
theorem between pure point dynamical and diffraction spectrum~\cite[Thm.~7]{BL},
hence employing the diffraction measure of $V$ calculated in~\cite{BMP,PH}.

\section{$k$-free lattice points}\label{kfree}

The square-free integers and the visible points of the square lattice are
particular cases of the following natural generalisation. Let
$\varLambda\subset\mathbb R^n$ be a lattice. The \emph{$k$-free
  points} $V=V(\varLambda,k)$ of $\varLambda$ are then defined by 
$$
V\,=\,\mathbb
\varLambda\setminus\bigcup_{\text{$p$ \scriptsize prime}} p^k\varLambda.
$$
They are the points with the property that the
greatest common divisor of their 
coordinates (in an arbitrary lattice basis) is not divisible by any non-trivial
$k$th power of an integer. Without restriction, we shall assume that
$\varLambda$ is
\emph{unimodular}, i.e.\ $|\det(\varLambda)|=1$. Moreover, we exclude the trivial
case $n=k=1$, where $V$ consists of just the two points of $\varLambda$ closest to $0$ on either side. On the basis of the results
in~\cite{BMP,PH}, one can then show analogous versions of any of the above
findings. In particular, one has~\cite[Cor.~1]{PH}
$$
\operatorname{dens}(V)=\frac{1}{\zeta(nk)}
$$
and the result for the diffraction measure $\widehat\gamma$ of $V$ looks
as follows. Recall that the \emph{dual}\/ or \emph{reciprocal 
lattice}\/ $\varLambda^*$ of $\varLambda$ is
\[ 
\varLambda^*:=\{y \in\mathbb{R}^n\,\mid\, y\cdot x\in\mathbb Z
\mbox{ for all } x\in\varLambda\}.
\]
Further, the
\emph{denominator} of a point $\ell$ in the $\mathbb Q$-span $\mathbb
Q\varLambda^*$ of $\varLambda^*$ is defined as
$$
\operatorname{den}(\ell):=\operatorname{gcd}\{m\in\mathbb N\,\mid\,m \ell\in\varLambda^*\}.
$$

\begin{theorem}\cite[Thms.~3 and 5]{BMP} \cite[Thm.~8]{PH}\label{thdiff}
The natural diffraction measure $\widehat{\gamma}$ of the autocorrelation
$\gamma$ of\/ $V$ exists. It is a positive pure point measure which is
translation bounded and supported on the set of points in $\mathbb Q\varLambda^*$
with $(k+1)$-free denominator, so
$$
\widehat{\gamma}=\sum_{{\substack{\ell\in\mathbb
      Q\varLambda^*\\\text{\scriptsize $\operatorname{den}(\ell)$  $(k+1)$-free}}}}I(\ell)\,\delta_l.
$$
In particular,
$I(0)=(1/\zeta(nk))^2$ and when\/ $0\neq \ell\in\mathbb Q\varLambda^*$ has
$(k+1)$-free denominator
$\operatorname{den}(\ell)$, the corresponding intensity is given by 
$$
I(\ell)=\Bigg(\frac{1}{\zeta(nk)}\prod_{p\mid \operatorname{den}(\ell)}\frac{1}{p^{nk}-1}\Bigg)^2.
$$
\qed
\end{theorem}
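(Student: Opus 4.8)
The plan is to mirror the two-dimensional argument of Section~\ref{visible}, working locally at each prime and assembling the global answer as an Euler product. First I would determine the autocorrelation coefficients
$$
\eta(x)=\lim_{R\to\infty}\frac{1}{\operatorname{vol}(B_R(0))}\bigl|V\cap(-x+V)\cap B_R(0)\bigr|
$$
for $x\in\varLambda$. For fixed $x$, the requirement $y\in V$ and $y+x\in V$ translates, prime by prime, into $[y]_{p^k}\neq[0]_{p^k}$ and $[y+x]_{p^k}\neq[0]_{p^k}$ in the group $\varLambda/p^k\varLambda$ of order $p^{nk}$ (unimodularity normalises $\operatorname{dens}(\varLambda)=1$, whence $\operatorname{dens}(V)=1/\zeta(nk)$). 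By the Chinese Remainder Theorem (Proposition~\ref{crt}) these conditions decouple over any finite set of primes, so truncating, counting the excluded residue classes exactly, and controlling the tail by a Mirsky-type estimate for the rare large holes yields the Euler product
$$
\eta(x)=\xi\prod_{p^k\mid x}\frac{p^{nk}-1}{p^{nk}-2},\qquad\xi=\prod_p\bigl(1-2p^{-nk}\bigr),
$$
with local factor $1-p^{-nk}$ when $p^k\mid x$ and $1-2p^{-nk}$ otherwise. For $n=k=1$ this product diverges, which is precisely the excluded trivial case; for $n=2$, $k=1$ it recovers the autocorrelation of Section~\ref{visible}.

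Next I would Fourier transform $\eta$ as indicated after Theorem~\ref{diff}. Each local factor $f_p$ depends only on whether $[x]_{p^k}$ vanishes, so on $\varLambda/p^k\varLambda$ one has $f_p=(1-2p^{-nk})\mathbbm{1}+p^{-nk}\delta_{[0]}$. Expanding in the characters of $\varLambda/p^k\varLambda$, whose dual is $\tfrac1{p^k}\varLambda^*/\varLambda^*$, gives local Fourier coefficients $(1-p^{-nk})^2$ at the trivial character and $p^{-2nk}$ at each non-trivial one. Multiplying over all primes produces a candidate pure point measure supported on those $\ell\in\mathbb Q\varLambda^*$ whose $p$-part lies in $\tfrac1{p^k}\varLambda^*/\varLambda^*$ for every prime $p$ — equivalently, whose denominator $\operatorname{den}(\ell)$ is $(k+1)$-free — with mass
$$
\prod_{p\,\nmid\,\operatorname{den}(\ell)}(1-p^{-nk})^2\prod_{p\,\mid\,\operatorname{den}(\ell)}p^{-2nk}=\Biggl(\frac{1}{\zeta(nk)}\prod_{p\mid\operatorname{den}(\ell)}\frac{1}{p^{nk}-1}\Biggr)^{\!2}.
$$
Since every non-trivial local coefficient is strictly positive, there are no extinctions and the support is exactly the stated set; the mass depends only on the primes dividing $\operatorname{den}(\ell)$, not on the precise $p$-power, and it reproduces $I(0)=1/\zeta(nk)^2$ together with the stated intensity formula.

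To turn this into a proof I would invoke Herglotz--Bochner as in Section~\ref{visible}: since $\eta$ is positive definite on $\varLambda$ by construction, there is a finite positive measure $\varrho$ on the torus $\mathbb R^n/\varLambda^*$ with Fourier--Stieltjes coefficients $\eta(x)$, and $\widehat\gamma=\varrho*\delta_{\varLambda^*}$. It then suffices to show that the atomic measure built from the intensities above has these same coefficients, whereupon it coincides with $\varrho$ by uniqueness of the Fourier transform on the compact torus, making $\widehat\gamma$ pure point and translation bounded. Summing the candidate intensities over all $\ell$ gives
$$
\frac{1}{\zeta(nk)^2}\prod_p\Bigl(1+\frac{1}{p^{nk}-1}\Bigr)=\frac{1}{\zeta(nk)}=\eta(0),
$$
so the atomic measure is finite with the correct total mass $\varrho(\mathbb R^n/\varLambda^*)=\eta(0)$; this finiteness (again requiring $nk\ge2$) supplies the absolute convergence that legitimises rearranging the Euler product for $\eta$ into a character sum and matching coefficients term by term.

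The main obstacle is this last step — excluding a continuous component of $\varrho$, i.e.\ proving that the formally computed atoms exhaust it. The absolute-convergence bookkeeping above is what secures this, but it must be reconciled with the non-uniform nature of the patch frequencies caused by the arbitrarily large, lattice-periodic holes of Proposition~\ref{propbasic}(a); this is exactly why the input density results have to be of Mirsky type rather than uniform. A more conceptual alternative would sidestep the continuous-spectrum question altogether: one records that $V$ is a weak model set for the cut and project scheme with internal group $\prod_p(\varLambda/p^k\varLambda)$ and window $\prod_p\bigl((\varLambda/p^k\varLambda)\setminus\{[0]\}\bigr)$, establishes $\nu$-genericity and ergodicity as in Theorem~\ref{c2}, and then applies the equivalence of pure point dynamical and diffraction spectrum~\cite{BL}; the intensities then arise as squared Fourier--Bohr amplitudes read off from this window.
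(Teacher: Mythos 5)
Your proposal is sound, and its arithmetic core --- local counting modulo $p^k\varLambda$, decoupling over primes via the Chinese Remainder Theorem (Prop.~\ref{crt}), and Mirsky-type estimates to establish the Euler product for $\eta$ --- is exactly the input used in \cite{BMP,PH}, which the paper cites in lieu of a self-contained proof. Your local computations check out: on $\varLambda/p^k\varLambda$ one has $f_p=(1-2p^{-nk})\mathbbm{1}+p^{-nk}\delta_{[0]}$ with Fourier coefficients $(1-p^{-nk})^2$ at the trivial character and $p^{-2nk}$ at each non-trivial one; the product $\prod_{p\nmid d}(1-p^{-nk})^2\prod_{p\mid d}p^{-2nk}$ does equal the stated $I(\ell)$; and your total-mass identity recovering $\eta(0)=1/\zeta(nk)$ is correct and, with $nk\ge 2$, legitimises the rearrangement of the Euler product into an absolutely convergent character sum. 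Where you genuinely differ from the paper's route is the endgame. The cited proofs --- whose method the paper sketches in Section~\ref{bfree} --- obtain $\widehat\gamma$ as a limit of diffraction measures of crystallographic approximants (sieving only finitely many primes), each computed exactly by Poisson summation; a Weierstra{\ss} M-test then upgrades vague to norm convergence, and norm limits preserve pure pointedness by \cite[Thm.~8.4]{TAO}. You instead identify the Herglotz--Bochner spectral measure $\varrho$ on $\mathbb R^n/\varLambda^*$ with the explicit atomic candidate by uniqueness of Fourier--Stieltjes transforms on a compact Abelian group --- in effect a rigorous elaboration of the ``alternative view'' the paper itself floats after Theorem~\ref{diff}. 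Your route is shorter and avoids the measure-convergence technology entirely, but it concentrates all analytic difficulty in the Mirsky input $\eta(x)=\prod_p f_p([x]_{p^k})$ and yields no error estimates, whereas the approximant route gives effective rates (valuable precisely because the large lattice-periodic holes make patch frequencies non-uniform) and transfers verbatim to the $\mathscr B$-free setting.

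Two minor cautions. First, at $n=k=1$ the product does not so much diverge as degenerate: the factor at $p=2$ is $1-2\cdot 2^{-1}=0$. Second, your ``more conceptual alternative'' must dodge a circularity: in the paper, pure point \emph{dynamical} spectrum of $(\mathbb X_{V},\varLambda,\nu)$ is deduced \emph{from} pure point diffraction via \cite[Thm.~7]{BL}, so to run the implication in the opposite direction you would first need an independent proof that the system is metrically Kronecker --- which is exactly what the torus parametrisation of Appendix~\ref{appb}, resting on the ergodicity established in Appendix~\ref{appa}, supplies --- and the explicit intensity formula would still require a separate Weyl-type computation of the Fourier--Bohr amplitudes from the window.
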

Again, the hull $$\mathbb X_V=\overline{\{t+V\,|\,t\in\varLambda\}}$$ of $V$ turns out to be just the set of
\emph{admissible}\/ subsets of $\varLambda$, i.e.\ subsets $A$ of $\varLambda$
with
$$
|A_p|<p^{nk}
$$
for any prime $p$, where $A_p$ denotes the reduction of $A$ modulo
$p^k\varLambda$; see~\cite[Thm.~6]{PH}. The natural topological dynamical system $(\mathbb
X_V,\varLambda)$ has the analogous properties as in the special case
discussed above in Theorem~\ref{c1}. In particular, it has positive topological entropy
equal to the patch counting entropy of $V$, i.e.\
$$h_\T(V)=\frac{\log (2)}{\zeta(nk)}$$ 
by~\cite[Thm.~3]{PH}. For the patch frequencies, one has the following
result.
 \begin{theorem}{\rm \cite[Thms.~1 and~2]{PH}}\label{freq2}
Any\/ $\rho$-patch\/ $\mathcal P$ of\/ $V$ occurs with positive
frequency, which is given by
\[\nu(\mathcal P)=\sum_{\mathcal F\subset (\varLambda\cap B_{\rho}(0))\setminus \mathcal P}(-1)^{|\mathcal F|}
\prod_p\left(1-\frac{|(\mathcal P\cup\mathcal
    F)_p|}{p^{nk}}\right).\]
\qed
\end{theorem}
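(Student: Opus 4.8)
The plan is to run the argument behind the visible-point case, Theorem~\ref{freq}, essentially verbatim, replacing $\mathbb Z^2$ by $\varLambda$, the sublattices $p\mathbb Z^2$ by $p^k\varLambda$, and the index $p^2=[\mathbb Z^2:p\mathbb Z^2]$ by $p^{nk}=[\varLambda:p^k\varLambda]$. Write $A_x(t)$ for the statement that $x+t$ is $k$-free, and set $W:=(\varLambda\cap B_\rho(0))\setminus\mathcal P$. The condition $(V-t)\cap B_\rho(0)=\mathcal P$ says that $A_x(t)$ holds for every $x\in\mathcal P$ and fails for every $x\in W$. Expanding the product $\prod_{x\in W}(1-\mathbf 1[A_x(t)])$ by inclusion--exclusion turns the indicator of this event into the finite signed sum $\sum_{\mathcal F\subseteq W}(-1)^{|\mathcal F|}\,\mathbf 1[\,A_x(t)\text{ holds for all }x\in\mathcal P\cup\mathcal F\,]$. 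Consequently, once the relevant densities are known to exist, $\nu(\mathcal P)$ equals the corresponding signed sum of the densities of the sets
\[
 T_{\mathcal G}:=\{t\in\varLambda\mid x+t\text{ is }k\text{-free for all }x\in\mathcal G\},\qquad \mathcal G=\mathcal P\cup\mathcal F .
\]

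For a fixed finite set $\mathcal G\subset\varLambda$, the point $x+t$ fails to be $k$-free through a prime $p$ precisely when $[t]_{p^k}=[-x]_{p^k}$. Hence the condition defining $T_{\mathcal G}$ forbids, at each prime $p$, exactly the $|\mathcal G_p|$ residue classes $\{[-x]_{p^k}\mid x\in\mathcal G\}$ among the $p^{nk}$ classes of $\varLambda/p^k\varLambda$, and by the Chinese Remainder Theorem (Proposition~\ref{crt}) these constraints are independent across distinct primes. Truncating to the primes $p\le y$ therefore produces a $\varLambda$-periodic set whose density is exactly $\prod_{p\le y}(1-|\mathcal G_p|/p^{nk})$.

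The only genuinely technical step, and the one I expect to be the main obstacle, is a Mirsky-type tail estimate showing that the natural density of $T_{\mathcal G}$ exists and equals the full infinite product. Since $T_{\mathcal G}$ is contained in each truncation, its upper density is at most $\prod_{p\le y}(1-|\mathcal G_p|/p^{nk})$. Conversely, the elements of the $p\le y$ truncation that fail to lie in $T_{\mathcal G}$ must satisfy $x+t\in p^k\varLambda$ for some $x\in\mathcal G$ and some prime $p>y$, a set of upper density at most $|\mathcal G|\sum_{p>y}p^{-nk}$. As we have excluded the case $n=k=1$, we have $nk\ge 2$ and thus $\sum_p p^{-nk}<\infty$; letting $y\to\infty$ then squeezes both the upper and the lower density of $T_{\mathcal G}$ to $\prod_p(1-|\mathcal G_p|/p^{nk})$. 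Feeding this into the signed sum of the first paragraph yields the stated formula, the sum being finite because $W$ is finite.

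Finally, positivity does not follow from the alternating formula and needs a separate argument. If $W=\varnothing$, then $\nu(\mathcal P)=\prod_p(1-|\mathcal P_p|/p^{nk})$, which is positive because $\mathcal P$ is admissible, so that $|\mathcal P_p|<p^{nk}$ for every $p$ (here one uses that $\mathbb X_V$ equals the admissible subsets of $\varLambda$ and is hereditary), and because the product converges. In general I would exhibit a positive-density subset of $\{t\mid(V-t)\cap B_\rho(0)=\mathcal P\}$: to each $x\in W$ assign a distinct prime $q_x$, large enough that the finitely many points of $\varLambda\cap B_\rho(0)$ are pairwise distinct modulo $q_x^k\varLambda$, and impose $[t]_{q_x^k}=[-x]_{q_x^k}$; for every remaining prime $p$ impose $[t]_{p^k}\notin\{[-y]_{p^k}\mid y\in\mathcal P\}$. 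The separation property guarantees that no point of $\mathcal P$ is made non-$k$-free by any prime, while each $x\in W$ is made non-$k$-free by $q_x$, so every such $t$ realises exactly the patch $\mathcal P$. By the Chinese Remainder Theorem and the same tail estimate, these $t$ form a set of density $(\prod_{x\in W}q_x^{-nk})\prod_p(1-|\mathcal P_p|/p^{nk})$, where the last product runs over the primes $p\notin\{q_x\mid x\in W\}$; this is strictly positive, bounds $\nu(\mathcal P)$ from below, and completes the proof.
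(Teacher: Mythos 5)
Your proposal is correct and takes essentially the same route as the proof the paper relies on: the paper itself gives no argument for Theorem~\ref{freq2}, quoting it from \cite[Thms.~1 and~2]{PH}, where the formula is obtained exactly by your inclusion--exclusion over the missing points of the patch, a Chinese-Remainder computation of the truncated densities $\prod_{p\le y}\bigl(1-|\mathcal G_p|/p^{nk}\bigr)$, a Mirsky-type tail estimate using $nk\ge 2$, and positivity via a congruence construction at auxiliary large primes, just as you describe. The only step you should spell out is the upper-density bound for the union $\bigcup_{p>y}\{t\mid x+t\in p^k\varLambda\}$, since upper density is only finitely subadditive: for $t$ ranging over a ball of radius $R$, primes with $p^k$ exceeding a constant multiple of $R$ contribute at most the single point $t=-x$, so the count is $O\bigl(R^n\sum_{p>y}p^{-nk}\bigr)+O(1)$ uniformly in $R$, which is precisely the Mirsky estimate underlying \cite{PH}.
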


This gives rise to a measure-theoretic dynamical
system $(\mathbb X_{V},\varLambda,\nu)$ which can be seen, as above,
to be ergodic and metrically isomorphic to $(G,\varLambda,\mu)$, where $G$ is the
compact Abelian group
$$
G=\prod_p\varLambda_p=\prod_p\varLambda/p^k\varLambda
$$
on which the lattice $\varLambda$ acts via addition of
$\iota(x)=([x]_p)$. As before, $\mu$ is the normalised Haar measure on $G$. It follows that $(\mathbb X_{V},\varLambda,\nu)$ has
zero measure entropy; see also~\cite[Thm.~4]{PH}. Again, $V$ turns out
to be $\nu$-generic. We thus get the analogous result to
Theorem~\ref{c2} also in this more general setting.

\section{$\mathscr B$-free lattice points}\label{bfree}

One further generalisation step seems possible as follows. In~\cite{ELD}, 
Lema\'nczyk et al.\ studied the dynamical properties of $\mathscr B$-free systems, i.e.\
$\mathscr B\subset\{2,3,\dots\}$ consists of
pairwise coprime integers satisfying
$$
\sum_{b\in\mathscr B}\frac{1}{b}<\infty
$$
and the hull $\mathbb X_{\mathscr
  B}=\overline{\{t+V\,|\,t\in\mathbb Z\}}$ is the orbit closure of
the set
$$
V\,=\,\mathbb Z\setminus\bigcup_{b\in\mathscr B} b\mathbb Z
$$
of \emph{$\mathscr B$-free numbers}\/ (integers with no factor from
$\mathscr B$). Replacing the one-dimensional lattice $\mathbb Z\subset \mathbb
R$ by other unimodular 
lattices $\varLambda\subset\mathbb R^n$ in the above definitions and
requiring that $\sum_{b\in\mathscr B}\frac{1}{b^n}<\infty$, one arrives
at  \emph{$\mathscr B$-free lattice points}\/ $V$ and the associated
topological dynamical systems $(\mathbb X_V,\varLambda)$. The $k$-free lattice points from the previous
section then arise from the particular choice $\mathscr B=\{p^k\,|\,\text{$p$ prime}\}$. Since the proofs in~\cite{BMP,PH} do not use
special properties of $k$th powers of prime numbers except their
pairwise coprimality, the above results carry over to the case of $\mathscr
B$-free lattice points with almost identical proofs. In particular,
the density and topological (patch counting) entropy of $V$ are given by
$$
\operatorname{dens}(V)=\prod_{b\in\mathscr B}\left(1-\frac{1}{b^n}\right)
$$
and
$
h_\T(V)=\log (2)\operatorname{dens}(V),
$
respectively. Again, $\mathbb X_V$ contains the \emph{admissible}\/
subsets $A$ of $\varLambda$, i.e.\
$$
|A_b|<b^{n}
$$
for any $b\in\mathscr B$, where $A_b$ denotes the reduction of $A$ modulo
$b\varLambda$. Moreover, the diffraction measure $\widehat \gamma$
of $V$ exists. It is a pure point measure that is supported on the set of points $\ell\in\mathbb Q\varLambda^*$
with the property that the denominator $\operatorname{den}(\ell)$ divides
a finite product of distinct $b$'s, the intensity at such a point
being given by
$$
I(\ell)=\Bigg(\operatorname{dens}(V)\prod_{\substack{ b\in\mathscr B\\\operatorname{gcd}(\operatorname{den}(\ell),b)\neq 1}}\frac{1}{b^{n}-1}\Bigg)^2.
$$
Both the pure point nature and the intensity formula can be shown by
writing $\widehat \gamma$ as a vague limit of diffraction measures of
approximating crystallographic systems. A Weierstra{\ss} M-test, as
in~\cite{BMP}, implies that one also has the stronger norm
convergence, which preserves the pure point nature of approximating
measures in the limit via~\cite[Thm.\ 8.4]{TAO}.

Further, the frequency of a $\rho$-patch $\mathcal P$ of $V$ is
positive and given by the expression
$$
\nu(\mathcal P)=\sum_{\mathcal F\subset (\varLambda\cap B_{\rho}(0))\setminus \mathcal P}(-1)^{|\mathcal F|}
\prod_{b\in\mathscr B}\left(1-\frac{|(\mathcal P\cup\mathcal
    F)_b|}{b^{n}}\right).
$$ 

The associated measure-theoretic dynamical
system $(\mathbb X_{V},\varLambda,\nu)$ can be seen, as above,
to be ergodic and metrically isomorphic to $(G,\varLambda,\mu)$, where $G$ is the
compact Abelian group
$$
G=\prod_{b\in\mathscr B}\varLambda_b=\prod_{b\in\mathscr B}\varLambda/b\varLambda
$$
on which the lattice $\varLambda$ acts via addition of
$\iota(x)=([x]_b)$
and $\mu$ is the normalised Haar measure on $G$. Again, $V$ turns out
to be $\nu$-generic. Due to the similarity of the structures, we leave
all details to the reader.

\section{$k$-free integers in number fields}\label{number}

Another possible extension is given by the number-theoretic setting that was studied
by Cellarosi and Vinogradov in~\cite{CV}. Let $K$ be an algebraic
number field, i.e.\ a finite extension of $\mathbb Q$, say of degree
$[K:\mathbb Q]=d$; see~\cite{BS,Neukirch} for background material on
algebraic number theory. Further, consider the Dedekind domain $\mathcal O_K$ of
integers in $K$ (in particular, any ideal $0\neq\mathfrak a\neq
\mathcal O_K$
of $\mathcal O_K$ 
factors uniquely as a product of prime ideals) and define, for $k\ge
2$, the set $V$ of \emph{$k$-free integers}\/ of $K$ as the nonzero elements $a\in
\mathcal O_K$ with the property that, if $a$ is not a
unit, then the prime ideal factorisation of
the corresponding principal ideal
$0\neq(a)\neq\mathcal O_K$ contains no $k$th powers of prime ideals, i.e.\ $(a)\not\subseteq
\mathfrak p^k$ for any prime ideal $\mathfrak p\subset \mathcal
O_K$. In other words, one has
$$
V\,=\,V(\mathcal O_K,k)\,=\,\mathcal O_K\setminus\bigcup_{\substack{\mathfrak p\subset \mathcal
    O_K\\\text{\scriptsize $\mathfrak p$ prime ideal}}} \mathfrak p^k.
$$  
It is well known that $\mathcal
O_K$ is a free $\mathbb Z$-module of rank $d$ and is thus isomorphic to the lattice
$\mathbb Z^d$ as a group. In particular, there is a natural isomorphism from $\mathcal
O_K$ to a lattice in $\mathbb R^d$, namely the \emph{Minkowski
  embedding}; see~\cite{BS,Neukirch} and~\cite[Ch.\ 3.4]{TAO}. In order to illustrate this, we prefer to discuss the specific real quadratic
number field  
$K=\mathbb Q(\sqrt 2)$ with $\mathcal
O_K=\mathbb Z[\sqrt 2]$. It is well known that $K$ is a Galois
extension and thus has precisely two field automorphisms, namely the
identity and the non-trivial automorphism determined by $\sqrt
2\mapsto -\sqrt 2$. We denote the latter automorphism by $x\mapsto
x'$. One can then easily check that $\mathcal O_K$ corresponds
under the \emph{Minkowski embedding}\/ $j\!:\, K\rightarrow
\mathbb R^2$, given by
$$
x\mapsto (x,x'),
$$ 
to a non-unimodular lattice
$\mathcal L$ in $\mathbb R^2$ with area $|\det \mathcal L|=2\sqrt
2=\sqrt{ |d_K|}$, where $d_K=8$ is the discriminant of $K$. In fact, since $\mathcal O_K=\mathbb Z\oplus \mathbb Z\sqrt 2$, one has $\mathcal L=\mathbb
Z(1,1)\oplus\mathbb Z(\sqrt 2,-\sqrt 2)$; see Figure~\ref{fig: minkowski}. 

\begin{center}
\begin{figure}
\includegraphics[width=0.85\textwidth]{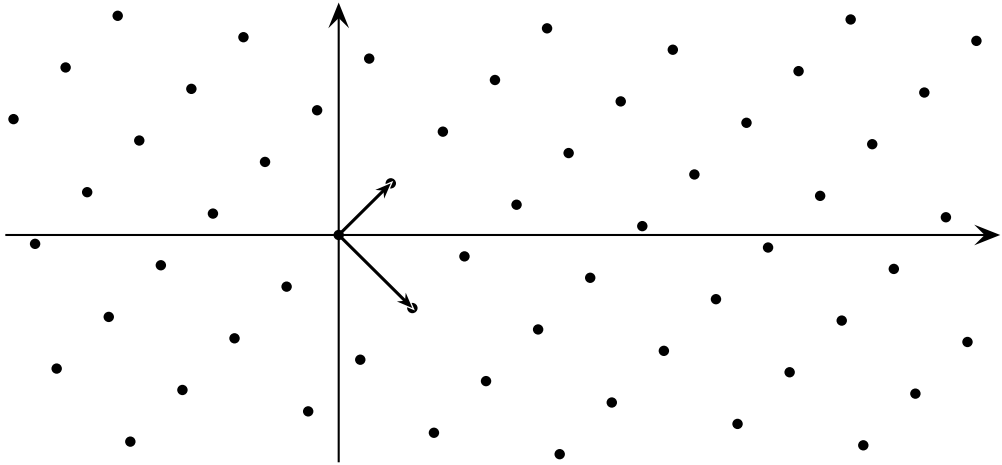}
\caption{The Minkowski embedding of $\mathbb Z[\sqrt 2]$ in $\mathbb R^2$.}
\label{fig: minkowski}
\end{figure}
\end{center}

Moreover, the image
$j(\mathfrak a)$ of any ideal
$0\neq\mathfrak a\subset \mathcal O_K$ is a lattice in $\mathbb
R^2$ with area
\begin{equation}\label{normindex}
|\det j(\mathfrak a)|=2\sqrt 2\, (\mathcal O_K:\mathfrak a),
\end{equation}
where $(\mathcal O_K:\mathfrak a)$ denotes the (finite) subgroup index of
$\mathfrak a$ in $\mathcal O_K$, i.e.\ the absolute norm $N(\mathfrak a)$
of $\mathfrak a$. Note that the absolute norm is a totally multiplicative function on the set of
non-zero ideals of $\mathcal O_K$. We are thus led to consider
the familiar object
$$
j(V)\,=\,\mathcal L\setminus\bigcup_{\mathfrak p} j(\mathfrak p^k),
$$ 
where here and below $\mathfrak p$ runs through the prime ideals of
$\mathcal O_K$; see Fig.~\ref{fig: squarefreenumber} for an illustration. Again, the
proofs in~\cite{BMP,PH} can be adjusted to obtain similar results as above.

\begin{center}
\begin{figure}
\includegraphics[width=\textwidth]{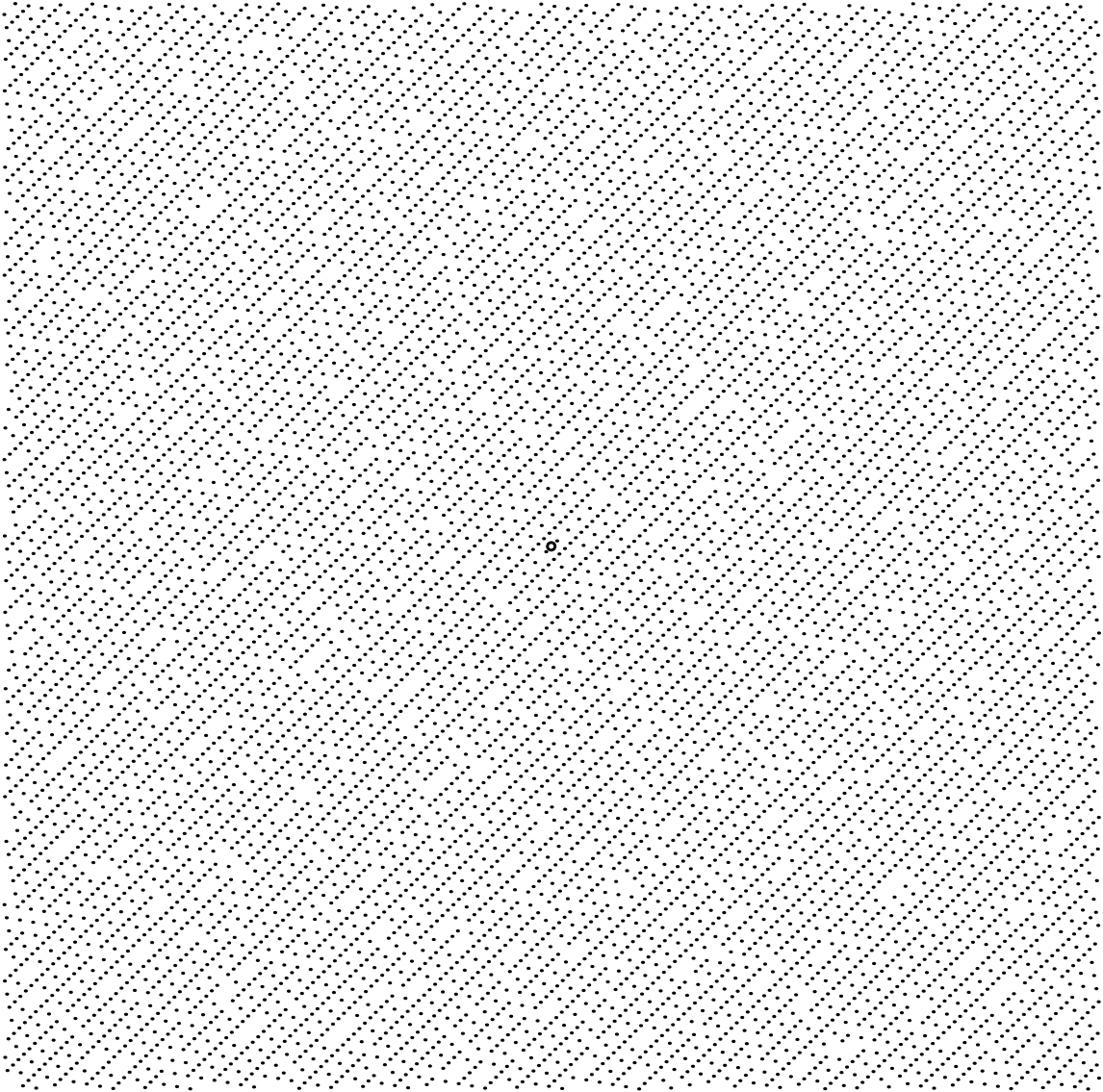}
\caption{A central patch of the Minkowski embedding of the square-free
  integers $V$ of $\mathbb Q(\sqrt 2)$.}
\label{fig: squarefreenumber}
\end{figure}
\end{center}

Denote by 
$$
\zeta_K(s)=\zeta_{\mathbb
  Q(\sqrt 2)}(s)=\sum_{\substack{\mathfrak a\subset \mathcal
    O_K\\\text{\scriptsize $\mathfrak a\neq 0$
      ideal}}}\frac{1}{N(\mathfrak a)^s}=\prod_{\mathfrak p}\left(1-\frac{1}{N(\mathfrak p)^s}\right)^{-1}
$$
the \emph{Dedekind $\zeta$-function}\/ of $K$, which converges for all $s$ with
  $\operatorname{Re}(s)>1$. Employing Eq.~\eqref{normindex}, a similar reasoning as in the
  previous sections now shows that the density and topological (patch
  counting) entropy of $j(V)$ are given by
$$
\operatorname{dens}\big(j(V)\big)=\frac{1}{2\sqrt
  2}\,\frac{1}{\zeta_{K}(k)}=\frac{1}{2\sqrt 2}\prod_{\mathfrak p}\left(1-\frac{1}{N(\mathfrak p)^k}\right)
$$ 
and $\log (2) \operatorname{dens}(j(V))$, respectively. Note that the Chinese Remainder Theorem in its general form says that, given
pairwise coprime ideals $\mathfrak a_1,\dots,\mathfrak a_r$ in a ring $\mathcal
O$ ($\mathfrak a_s+\mathfrak a_t=\mathcal O$ for $s\neq t$), one has
$\prod_{i=1}^r\mathfrak a_i=\mathfrak a_1\cap\dots\cap \mathfrak a_r$
and 
$$
\mathcal O\big/\prod_{i=1}^r\mathfrak a_i\,\simeq\,
\prod_{i=1}^r \mathcal O/\mathfrak a_i.
$$
Recall that the \emph{dual}\/ or \emph{reciprocal 
module}\/ $\mathcal O_K^*$ of $\mathcal O_K$ is the fractional ideal
\[ 
\mathcal O_K^*:=\{y \in K\,\mid\, \operatorname{Tr}_{K/\mathbb Q}(yx)\in\mathbb Z
\mbox{ for all } x\in\mathcal O_K\}
\]
of $K$ containing $\mathcal O_K$, where $\operatorname{Tr}_{K/\mathbb Q}(yx)=yx+y'x'$ is the \emph{trace}\/ of $yx$. Then $j(\mathcal O_K^*)=\mathcal L^*$ and hence $j(\mathbb
Q\mathcal O_K^*)=\mathbb Q
\mathcal L^*$. Here, one calculates that $\mathcal O_K^*=\mathbb Z
\frac{1}{2}\oplus\mathbb Z\frac{\sqrt 2}{4}$ and thus $\mathbb Q\mathcal
O_K=\mathbb Q \mathcal O_K^*=K$ as well as $\mathbb Q\mathcal
L^*=\mathbb Q\mathcal L$. Further, the
\emph{denominator} of a point $\ell$ in $\mathbb
Q\mathcal L^*$ is defined as the non-zero ideal
$$
\operatorname{den}(\ell):=\{x\in \mathcal O_K \,\mid\,x
j^{-1}(\ell)\in\mathcal O_K^*\}\subset\mathcal O_K.
$$
Then, the diffraction measure $\widehat \gamma$
of $j(V)$ is pure point (for the same reason as above) and is supported on the set of points $\ell\in j(\mathbb
Q\mathcal O_K^*)=\mathbb Q
\mathcal L^*$
with $(k+1)$-free denominator $\operatorname{den}(\ell)$ (i.e.,
either $\operatorname{den}(\ell)=\mathcal O_K$ or the unique
prime ideal factorization of $\operatorname{den}(\ell)$ contains no
$(k+1)$th powers), the intensity
at such a point
being given by
$$
I(\ell)=\Bigg(\frac{1}{2\sqrt 2}\,\frac{1}{\zeta_{K}(k)}\prod_{\substack{\mathfrak p\\ \operatorname{den}(\ell)\subset\mathfrak p}}\frac{1}{N(\mathfrak p)^{k}-1}\Bigg)^2.
$$
See Fig.~\ref{fig: squarefreenumberdiff} for an illustration, where
the restriction of $\widehat \gamma$ to a compact region is shown.

\begin{center}
\begin{figure}
\includegraphics[width=0.85\textwidth]{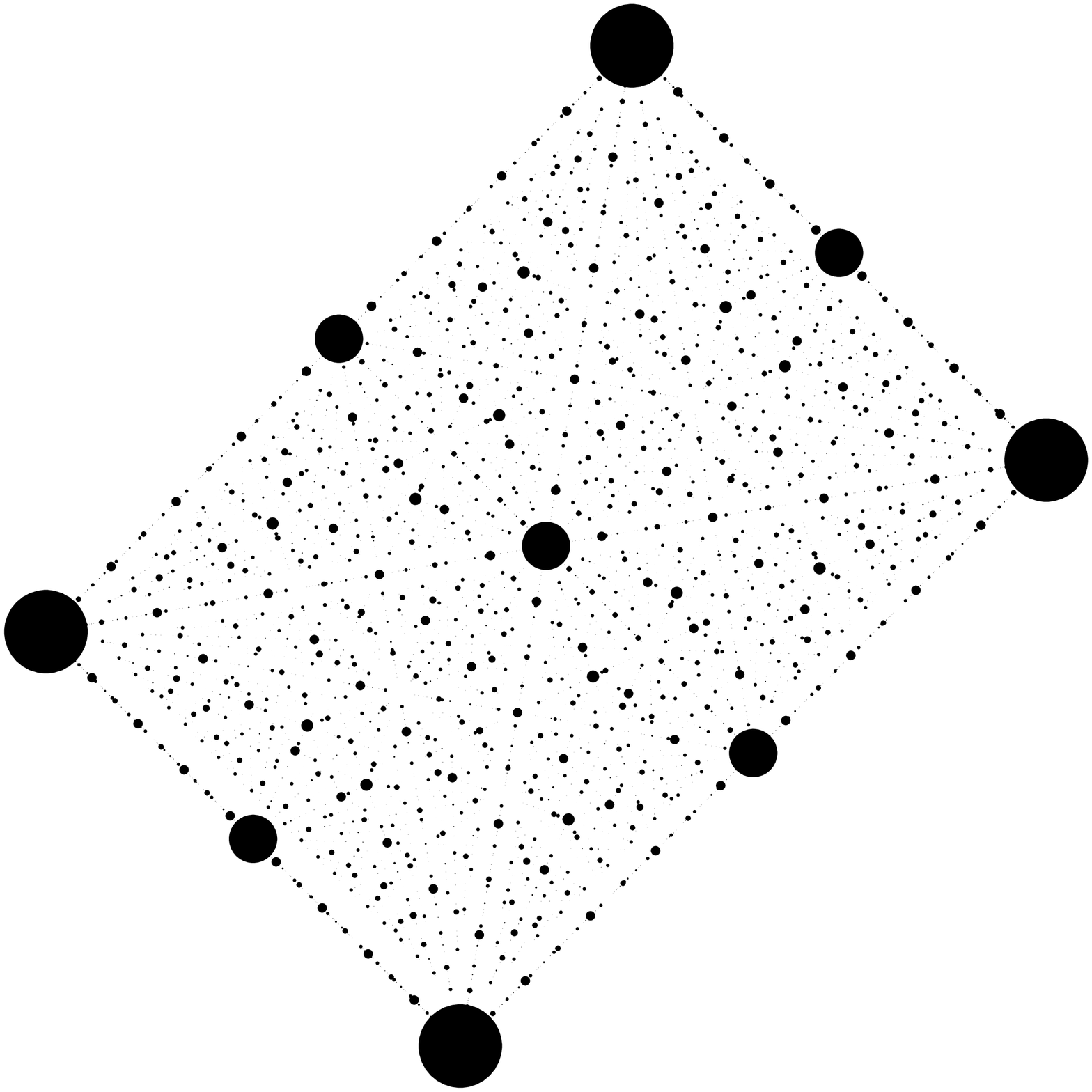}
\caption{Diffraction $\widehat{\gamma}$ of the Minkowski embedding of
  $V=V(\mathbb Z[\sqrt 2], 2)$, with the intensities rescaled by the
  function $x\mapsto  \sqrt[4]{x}/20$ for better
  visibility of small intensities. Its lattice of periods is $\mathcal
  L^*$ as described in the text and the picture shows the intensities
  inside the closure of one fundamental domain.}
\label{fig: squarefreenumberdiff}
\end{figure}
\end{center}

With a view to the general case of an algebraic number field $K$, the
above did not make use of the additional fact that $\mathbb Z[\sqrt
2]$ is in fact a \emph{Euclidean domain}\/ and thus a \emph{principal
  ideal domain}\/ (in particular, a \emph{unique factorisation domain}). Here, one has
$N(\mathfrak a)=|N_{K/\mathbb Q}(a)|$ for $\mathfrak a=(a)$, where
$N_{K/\mathbb Q}(a)=aa'$ is the \emph{norm} of $a$. Note that $\mathbb Z[\sqrt
2]$ is a Euclidean domain with respect to the norm function
$x\mapsto |N_{K/\mathbb Q}(x)|$, i.e. $a+b\sqrt 2\mapsto |a^2-2b^2|$. Furthermore, the
Dedekind $\zeta$-function of $K$ can
be written more explicitly in terms of the usual rational
primes, i.e.\
$$
\zeta_K(s)=\frac{1}{1-2^{-s}}\prod_{p\equiv\pm 1 (8)}\frac{1}{(1-p^{-s})^2}\prod_{p\equiv\pm 3 (8)}\frac{1}{1-p^{-2s}};
$$
see~\cite[Eq.\ (7)]{BM}. For instance, one has
$\zeta_K(2)=\frac{\pi^4}{48\sqrt 2}$; see~\cite[Eq.\ (58)]{BM}. Hence, the intensity $I(\ell)$ can be
computed explicitly in terms of the prime elements of $\mathcal O_K$
dividing any generator of the (principal) ideal
$\operatorname{den}(\ell)$. 

Let us finally turn to the associated topological dynamical system
$(\mathbb X_V,\mathcal O_K)\simeq(\mathbb
X_{j(V)},\mathcal L)$, where $\mathbb X_V$ resp.\ $\mathbb
X_{j(V)}$ are given as the translation orbit closure of $V$ resp.\ $j(V)$ with
respect to the product topology on $\{0,1\}^{\mathcal O_K}$ resp.\
$\{0,1\}^{\mathcal L}$. Again, $\mathbb X_V$ resp.\ $\mathbb
X_{j(V)}$ can be characterised as the \emph{admissible}\/ subsets $A$ of
$\mathcal O_K$ resp.\ $\mathcal L$, i.e.\
$$
|A_{\mathfrak p}|<N(\mathfrak p)^k
$$
for any prime ideal $\mathfrak p$ of $\mathcal O_K$, where
$A_{\mathfrak p}$ denotes the reduction of $A$ modulo $\mathfrak p^k$
resp.\ $j(\mathfrak p^k)$. Further, the frequency of a $\rho$-patch $\mathcal P$ of $j(V)$ is
positive and given by the expression
$$
\nu(\mathcal P)=\sum_{\mathcal F\subset (\mathcal L\cap B_{\rho}(0))\setminus \mathcal P}(-1)^{|\mathcal F|}
\prod_{\mathfrak p}\left(1-\frac{|(\mathcal P\cup\mathcal
    F)_{\mathfrak p}|}{N(\mathfrak p)^k}\right).
$$ 

The associated measure-theoretic dynamical
system $(\mathbb X_{j(V)},\mathcal L,\nu)\simeq (\mathbb X_V,\mathcal O_K,\nu)$ can be seen, as above,
to be ergodic and metrically isomorphic to $(G,\mathcal L,\mu)$, where $G$ is the
compact Abelian group
$$
G=\prod_{\mathfrak p}\mathcal L_{\mathfrak p}=\prod_{\mathfrak
  p}\mathcal L/j(\mathfrak p^k)\simeq \prod_{\mathfrak p}\mathcal
O_K/\mathfrak p^k=\prod_{\mathfrak p}(\mathcal O_K)_{\mathfrak p}
$$
on which the lattice $\mathcal L$ (resp.\ the group $\mathcal O_K$) acts via
addition of $\iota(j(x))=([j(x)]_{\mathfrak p})$
(resp.\ $\iota(x)=([x]_{\mathfrak p})$), where $x\in\mathcal O_K$,   
and $\mu$ is the normalised Haar measure on $G$. As above, $V$ turns out
to be $\nu$-generic.

Since none of the above uses special properties of the quadratic field
$\mathbb Q(\sqrt 2)$, similar results hold for the general case of an arbitrary
algebraic number field $K$. Moreover, even the extension to $\mathscr
B$-free integers in $K$, i.e.\
$$
V\,=\,V_{\mathcal O_K}\,=\,\mathcal O_K\setminus\bigcup_{\mathfrak
  b\in \mathscr B} \mathfrak b,
$$
where $\mathscr B$ is a set of pairwise coprime ideals $\mathfrak
b\subsetneq \mathcal O_K$ satisfying
$$
\sum_{\mathfrak
  b\in \mathscr B}\frac{1}{N(\mathfrak b)}<\infty,
$$
should be possible. 

\appendix
\renewcommand{\theequation}{A\arabic{equation}}
\setcounter{equation}{0}

\section{Ergodicity of the patch frequency measure}\label{appa}

Below, we shall only treat the paradigmatic case $V=V_{\mathbb Z^2}$ from Section~\ref{visible}. For a $\rho$-patch $\mathcal P\in\mathcal A(\rho)$ of $V$ (i.e.\ $P\subset
B_{\rho}(0)\cap \mathbb Z^2$ with $P\in\mathbb A$), denote by
$B_\mathcal P$ the set of elements of $\mathbb X_V=\mathbb A$ whose
$\rho$-patch at $0$ contains $\mathcal P$. One readily checks that the
sets of type 
$B_\mathcal P$ form a semi-algebra that also generates the Borel
$\sigma$-algebra on $\mathbb X_V$. In fact, one has $$C_\mathcal
P=B_\mathcal P\setminus {\bigcup_{\substack{\mathcal Q\in \mathcal
      A(\rho)\\\mathcal P\subsetneq\mathcal Q}}} B_\mathcal Q,$$ and 
\begin{equation}\label{bc}
B_\mathcal P=\dot{\bigcup_{\substack{\mathcal Q\in \mathcal
      A(\rho)\\\mathcal P\subset\mathcal Q}}} C_\mathcal Q.
\end{equation}

\begin{corollary}\label{nub}
For any\/ $\rho$-patch\/ $\mathcal P$ of $V$, one has
$$
\nu(B_\mathcal P)=\prod_p\left(1-\frac{|\mathcal
    P_p|}{p^{2}}\right).
$$
\end{corollary}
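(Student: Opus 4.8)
The plan is to combine the disjoint decomposition~\eqref{bc} with the explicit frequency formula of Theorem~\ref{freq}, and then to collapse the resulting alternating double sum by an inclusion--exclusion (sieve) argument. Set $W:=\mathbb Z^2\cap B_\rho(0)$ for the underlying window. Since $\mathcal A(\rho)$ is finite and the union in~\eqref{bc} is disjoint, finite additivity of $\nu$ together with $\nu(C_\mathcal Q)=\nu(\mathcal Q)$ gives
$$
\nu(B_\mathcal P)\;=\sum_{\substack{\mathcal Q\in\mathcal A(\rho)\\ \mathcal P\subset\mathcal Q}}\nu(\mathcal Q),
$$
into which I would insert the expression for $\nu(\mathcal Q)$ provided by Theorem~\ref{freq}.

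The first useful observation is that the admissibility constraint on $\mathcal Q$ can be dropped from the outer sum. Indeed, if $\mathcal Q\subset W$ fails to be admissible, then $|\mathcal Q_p|=p^2$ for some prime $p$; as $\mathcal Q\subset\mathcal Q\cup\mathcal F$ forces $|(\mathcal Q\cup\mathcal F)_p|=p^2$ as well, the factor $1-|(\mathcal Q\cup\mathcal F)_p|/p^2$ vanishes for \emph{every} $\mathcal F$, so the right-hand side of the formula in Theorem~\ref{freq} is identically $0$ on non-admissible patches. Adding these vanishing terms, I may let $\mathcal Q$ range over all subsets with $\mathcal P\subset\mathcal Q\subset W$, obtaining
$$
\nu(B_\mathcal P)\;=\sum_{\mathcal P\subset\mathcal Q\subset W}\;\sum_{\mathcal F\subset W\setminus\mathcal Q}(-1)^{|\mathcal F|}\prod_p\left(1-\frac{|(\mathcal Q\cup\mathcal F)_p|}{p^{2}}\right).
$$

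The decisive step is a change of summation variable. The product depends on the pair $(\mathcal Q,\mathcal F)$ only through the disjoint union $\mathcal R:=\mathcal Q\cup\mathcal F$, so I would fix $\mathcal R$ with $\mathcal P\subset\mathcal R\subset W$ and sum over the pairs producing it, namely those with $\mathcal P\subset\mathcal Q\subset\mathcal R$ and $\mathcal F=\mathcal R\setminus\mathcal Q$. Writing $\mathcal D:=\mathcal R\setminus\mathcal P$ and $\mathcal S:=\mathcal Q\setminus\mathcal P\subset\mathcal D$, one has $|\mathcal F|=|\mathcal D|-|\mathcal S|$, so the inner sum reduces to
$$
\sum_{\mathcal S\subset\mathcal D}(-1)^{|\mathcal D|-|\mathcal S|}\;=\;\sum_{j=0}^{|\mathcal D|}\binom{|\mathcal D|}{j}(-1)^{|\mathcal D|-j}\;=\;0^{|\mathcal D|},
$$
by the binomial theorem; this equals $1$ when $\mathcal D=\varnothing$, i.e.\ $\mathcal R=\mathcal P$, and $0$ otherwise. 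Hence only the term $\mathcal R=\mathcal P$ survives, leaving $\prod_p\bigl(1-|\mathcal P_p|/p^2\bigr)$, which is the assertion. The whole computation is elementary; the one point demanding care is the bookkeeping in passing from $(\mathcal Q,\mathcal F)$ to $(\mathcal R,\mathcal S)$ and the verification that the alternating inner sum telescopes to the diagonal $\mathcal R=\mathcal P$ — this cancellation is the heart of the matter.
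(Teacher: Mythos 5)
Your proposal is correct and follows essentially the same route as the paper's own proof: the disjoint decomposition~\eqref{bc}, insertion of the frequency formula from Theorem~\ref{freq}, reindexing by the union $\mathcal R=\mathcal Q\cup\mathcal F$, and the binomial-theorem cancellation that kills every term with $\mathcal R\supsetneq\mathcal P$. Your explicit justification for dropping the admissibility constraint on $\mathcal Q$ (the product vanishes whenever $|\mathcal Q_p|=p^2$) is a point the paper leaves implicit, but it does not change the argument.
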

\begin{proof}
Let $\mathcal P$ be a $\rho$-patch of $V$. In this proof, we indicate summation variables by a dot under
   the symbol. By Theorem~\ref{freq} and the definition of $\nu$, ~\eqref{bc} implies that
\begin{eqnarray*}
\nu(B_\mathcal P) 
&=& \sum_{\substack{\mathcal Q\in \mathcal A(\rho)\\\mathcal P\subset\mathcal Q}} \nu(C_\mathcal Q)\\
&=&\sum_{\substack{\mathcal Q\in \mathcal A(\rho)\\\mathcal P\subset\mathcal Q}}\hspace{1mm}\sum_{\mathcal Q\subset\mathcal F\subset \mathbb Z^2\cap B_{\rho}(0)}(-1)^{|\mathcal F\setminus \mathcal Q|}\prod_p\left(1-\frac{|\mathcal F_p|}{p^{2}}\right)\\
&=& \sum_{\mathcal P\subset \udo{\mathcal Q}\subset\udo{\mathcal F}\subset \mathbb Z^2\cap B_{\rho}(0)}(-1)^{|\mathcal F\setminus \mathcal Q|}\prod_p\left(1-\frac{|\mathcal F_p|}{p^{2}}\right)\\
&=& \prod_p\left(1-\frac{|\mathcal P_p|}{p^{2}}\right)+\sum_{\substack{\mathcal P\subset \udo{\mathcal Q}\subset\udo{\mathcal F}\subset \mathbb Z^2\cap B_{\rho}(0)\\\mathcal F\neq\mathcal P}}(-1)^{|\mathcal F\setminus \mathcal Q|}\prod_p\left(1-\frac{|\mathcal F_p|}{p^{2}}\right)\\
&=& \prod_p\left(1-\frac{|\mathcal P_p|}{p^{2}}\right),
\end{eqnarray*}
since, for fixed $\mathcal F \subset \mathbb Z^2\cap B_{\rho}(0)$ with
$\mathcal P\subsetneq \mathcal F$, one indeed has
\begin{eqnarray*}
\sum_{\mathcal P\subset \udo{\mathcal Q}\subset\mathcal F}(-1)^{|\mathcal F\setminus \mathcal Q|}\prod_p\left(1-\frac{|\mathcal F_p|}{p^{2}}\right)
&=& \prod_p\left(1-\frac{|\mathcal F_p|}{p^{2}}\right)\sum_{\mathcal P\subset \udo{\mathcal Q}\subset\mathcal F}(-1)^{|\mathcal F\setminus \mathcal Q|}\\
&=&\prod_p\left(1-\frac{|\mathcal F_p|}{p^{2}}\right)\sum_{\udo{\mathcal R}\subset \mathcal F\setminus\mathcal P}(-1)^{|\mathcal R|}\\
&=&\prod_p\left(1-\frac{|\mathcal F_p|}{p^{2}}\right)\sum_{i=0}^{|\mathcal F\setminus\mathcal P|} {|\mathcal F\setminus\mathcal P|\choose i}(-1)^i\\
&=& 0,
\end{eqnarray*}
where the last equality follows from the binomial theorem since $\mathcal F\setminus\mathcal P\neq \varnothing$.
\end{proof}

For a natural number $m$, finite subsets $\mathcal P$ and  $\mathcal Q$ of
$\mathbb Z^2$ and $S\subset \mathcal P_m$, we set
$$
\mathcal Q_{S,\mathcal P}^m=\Big(\bigcap_{s\in S}\mathcal
Q_m-s\Big)\setminus\Big(\bigcup_{s\in\mathcal P_m\setminus S}\mathcal Q_m-s\Big),
$$
i.e.\ the set of elements of $(\mathbb Z^2)_m$ that lie
in $\mathcal Q_m-s$ precisely for those $s\in \mathcal P_m$ with $s\in
S\subset\mathcal P_m$, in particular $\mathcal
Q_{\varnothing,\mathcal P}^m=(\mathbb Z^2)_m \setminus (\mathcal
Q_m-\mathcal P_m)$. With $q_{S,\mathcal P}^m=|\mathcal Q_{S,\mathcal P}^m|$, one then has
$q_{\varnothing,\mathcal P}^m=m^{2}-|\mathcal Q_m-\mathcal P_m|$ and, since
the difference set $\mathcal Q_m-\mathcal P_m$
 is the disjoint union of the various $\mathcal Q_{S,\mathcal P}^m$,
 where one has 
 $\varnothing\neq S\subset \mathcal P_m$,
\begin{equation}\label{eq1}
\sum_{S\subset \mathcal P_m}q_{S,\mathcal P}^m=m^{2}.
\end{equation}

Note that the following two lemmas also hold for any
finite
subsets $\mathcal P$ and $\mathcal Q$ of an arbitrary finite group $G$ instead of 
$G=(\mathbb Z^2)_m$.  

\begin{lemma}\label{lambda}
For any finite subsets\/ $\mathcal P$ and\/ $\mathcal Q$ of\/
$\mathbb Z^2$ and any natural number\/ $m$, one has
$$
\sum_{S\subset \mathcal P_m}|S|q_{S,\mathcal P}^m=|\mathcal
    P_m||\mathcal
    Q_m|.
$$
\end{lemma}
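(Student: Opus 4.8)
The plan is to interpret the left-hand side as a count of ordered pairs and to evaluate it by exchanging the order of counting. The crucial structural fact is that the sets $\mathcal Q^m_{S,\mathcal P}$, as $S$ ranges over all subsets of $\mathcal P_m$, form a partition of $G:=(\mathbb Z^2)_m$; this is precisely what underlies the identity~\eqref{eq1}. For $x\in G$, I would write
$$
S(x):=\{s\in\mathcal P_m\,\mid\,x\in\mathcal Q_m-s\}=\{s\in\mathcal P_m\,\mid\,x+s\in\mathcal Q_m\},
$$
and observe that $x\in\mathcal Q^m_{S,\mathcal P}$ holds if and only if $S=S(x)$. Thus each $x$ belongs to exactly one stratum, namely the one indexed by $S(x)$.

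Grouping the points of $G$ according to their stratum, and using $q^m_{S,\mathcal P}=|\{x\in G\mid S(x)=S\}|$, I would rewrite the weighted count as
$$
\sum_{S\subset\mathcal P_m}|S|\,q^m_{S,\mathcal P}=\sum_{x\in G}|S(x)|=\big|\{(x,s)\in G\times\mathcal P_m\,\mid\,x+s\in\mathcal Q_m\}\big|,
$$
where the last equality reflects that $|S(x)|$ counts exactly those $s\in\mathcal P_m$ that form such a pair with $x$.

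It then remains to count this set of pairs the other way round. Fixing $s\in\mathcal P_m$, the map $x\mapsto x+s$ is a bijection of the finite group $G$, so the number of $x\in G$ with $x+s\in\mathcal Q_m$ equals $|\mathcal Q_m|$, independently of $s$. Summing over the $|\mathcal P_m|$ choices of $s$ yields $|\mathcal P_m|\,|\mathcal Q_m|$, which is the desired identity.

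I do not expect any genuine obstacle: the argument is a clean double count, and the only point deserving a line of verification is the partition claim, i.e.\ that the assignment $x\mapsto S(x)$ places each $x\in G$ in a unique $\mathcal Q^m_{S,\mathcal P}$; this is immediate from the definition and is already implicit in the derivation of~\eqref{eq1}. Translation-invariance of cardinality in the finite abelian group $G$ is what makes the inner count constant in $s$, which is also the reason, as noted just before the lemma, that the statement persists verbatim with $(\mathbb Z^2)_m$ replaced by an arbitrary finite group.
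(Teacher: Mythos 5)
Your argument is correct, and it takes a genuinely different route from the paper. The paper proves the identity by induction on $|\mathcal P_m|$: it fixes a distinguished element $*\in\mathcal P_m$, splits the sum over $S$ according to whether $*\in S$, shows separately that $\sum_{S\subset\mathcal P_m\setminus\{*\}}q^m_{S\cup\{*\},\mathcal P}=|\mathcal Q_m|$, and verifies the merging identity $\mathcal Q^m_{S\cup\{*\},\mathcal P}\,\dot\cup\,\mathcal Q^m_{S,\mathcal P}=\mathcal Q^m_{S,\mathcal P\setminus\{*\}}$ so that the induction hypothesis applies to $\mathcal P_m\setminus\{*\}$. Your double count replaces this bookkeeping with a single exchange of summation: since $x\mapsto S(x)$ exhibits the strata $\mathcal Q^m_{S,\mathcal P}$ as a partition of $G=(\mathbb Z^2)_m$ (the same fact underlying~\eqref{eq1}), the left-hand side equals $\sum_{x\in G}|S(x)|$, i.e.\ the number of pairs $(x,s)\in G\times\mathcal P_m$ with $x+s\in\mathcal Q_m$, and counting by $s$ gives $|\mathcal P_m|\,|\mathcal Q_m|$ because translation by $s$ is a bijection of the finite group $G$. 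Your version is shorter and more transparent, and it makes the remark preceding the lemma --- that the statement holds verbatim for arbitrary finite groups --- completely self-evident, since translation-invariance of cardinality is the only structural input; the paper's inductive set manipulations, by contrast, are of the same style as the induction it runs for Lemma~\ref{lambda2}, which is presumably why the authors kept the proofs uniform. Both proofs use~\eqref{eq1} (or its underlying partition) in the same way, so nothing is gained or lost in terms of prerequisites.
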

\begin{proof}
We use induction on $|\mathcal
    P_m|$. For the induction basis $|\mathcal
    P_m|=0$, the assertion is trivially true. For the induction step, consider $\mathcal P$ with $|\mathcal P_m|>0$
and fix an element, say $*$, of $\mathcal P_m$. It follows that
\begin{eqnarray*}
\sum_{S\subset \mathcal P_m}|S|q_{S,\mathcal P}^m
&=&\sum_{S\subset \mathcal P_m\setminus\{*\}}|S|q_{S,\mathcal
  P}^m+\sum_{S\subset \mathcal
  P_m\setminus\{*\}}(|S|+1)q_{S\cup\{*\},\mathcal P}^m\\
&=&\sum_{S\subset \mathcal
  P_m\setminus\{*\}}|S|(q_{S,\mathcal
  P}^m+q_{S\cup\{*\},\mathcal P}^m)+\sum_{S\subset
  \mathcal P_m\setminus\{*\}}q_{S\cup\{*\},\mathcal P}^m\\
&=&\sum_{S\subset \mathcal
  P_m\setminus\{*\}}|S|(q_{S,\mathcal
  P}^m+q_{S\cup\{*\},\mathcal P}^m)+|\mathcal Q_m|,
\end{eqnarray*}
since 
\begin{eqnarray*}
\sum_{S\subset
  \mathcal P_m\setminus\{*\}}q_{S\cup\{*\},\mathcal P}^m
&=&\sum_{S\subset
  \mathcal P_m\setminus\{*\}}\left|\left((\mathcal Q_m-\{*\})\cap \Big(\bigcap_{s\in S}\mathcal
Q_m-s\Big)\right)\setminus\Big(\bigcup_{s\in\mathcal P_m\setminus
  (S\cup\{*\})}\mathcal Q_m-s\Big)\right |\\
&=&\sum_{S\subset
  \mathcal P_m\setminus\{*\}}\left|(\mathcal Q_m-\{*\})\cap \left(\Big(\bigcap_{s\in S}\mathcal
Q_m-s\Big)\setminus\Big(\bigcup_{s\in (\mathcal P_m\setminus\{*\})\setminus
  S}\mathcal Q_m-s\Big)\right)\right |\\
&=&|\mathcal Q_m-\{*\}|\\
&=&|\mathcal Q_m|.
\end{eqnarray*}
Furthermore, for $S\subset \mathcal P_m\setminus\{*\}$, one has
\begin{eqnarray*}
&&\hspace{-2em}
\mathcal Q_{S\cup\{*\},\mathcal P}^m\cup \mathcal Q_{S,\mathcal P}^m\\
&=&\left[(\mathcal Q_m-\{*\})\cap \left(\Big(\bigcap_{s\in S}\mathcal
Q_m-s\Big)\setminus\Big(\bigcup_{s\in (\mathcal P_m\setminus\{*\})\setminus
  S}\mathcal Q_m-s\Big)\right)\right]\dot\bigcup \\
&\hphantom{=}&\left[\left(\Big(\bigcap_{s\in S}\mathcal
Q_m-s\Big)\setminus (\mathcal Q_m-\{*\})\right)\cap \left(\Big(\bigcap_{s\in S}\mathcal
Q_m-s\Big)\setminus\Big(\bigcup_{s\in (\mathcal P_m\setminus\{*\})\setminus
  S}\mathcal Q_m-s\Big)\right)\right]\\
&=& \Big(\bigcap_{s\in S}\mathcal
Q_m-s\Big)\setminus\Big(\bigcup_{s\in (\mathcal P_m\setminus\{*\})\setminus
  S}\mathcal Q_m-s\Big)\\
&=&\mathcal Q_{S,\mathcal P\setminus \{*\}}^m
\end{eqnarray*}
wherefore, by the induction hypothesis,
$$
\sum_{S\subset \mathcal
  P_m\setminus\{*\}}|S|(q_{S,\mathcal P}^m+q_{S\cup\{*\},\mathcal P}^m)=\sum_{S\subset \mathcal
  P_m\setminus\{*\}}|S|q_{S,\mathcal P\setminus \{*\}}^m=(|\mathcal P_m|-1)|\mathcal
Q_m|.
$$
This completes the proof.
\end{proof}

\begin{lemma}\label{lambda2}
For any finite subsets\/ $\mathcal P$ and\/ $\mathcal Q$ of
$\mathbb Z^2$ and square-free
$d$, one has
$$
\sum_{\substack{(\nu_p)_{p\mid
      d}\\0\le\nu_p\le|\mathcal P_p|}}\prod_{p\mid
  d}\Big((\nu_p+|\mathcal Q_p|)\sum_{\substack{S\subset \mathcal
    P_p\\|S|=|\mathcal P_p|-\nu_p}}q_{S,\mathcal P}^p\Big)\,=\,\prod_{p\mid d}\left(p^{2}|\mathcal P_p|+ p^{2}|\mathcal
    Q_p|-|\mathcal P_p| |\mathcal Q_p|\right).
$$
\end{lemma}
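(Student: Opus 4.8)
The plan is to exploit the product structure of the left-hand side to reduce the claim to the case of a single prime, and then to recognise the resulting one-prime sum as a linear combination of the two quantities already evaluated in Eq.~\eqref{eq1} and Lemma~\ref{lambda}.

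First I would observe that, since $d$ is square-free, the primes $p\mid d$ are distinct and the outer sum runs over tuples $(\nu_p)_{p\mid d}$ whose components vary independently in the ranges $0\le\nu_p\le|\mathcal P_p|$, while the summand is a product of factors each depending only on the single index $\nu_p$. By the distributive law, the entire left-hand side therefore factorises as
$$
\prod_{p\mid d}\left(\sum_{\nu=0}^{|\mathcal P_p|}(\nu+|\mathcal Q_p|)\sum_{\substack{S\subset \mathcal P_p\\|S|=|\mathcal P_p|-\nu}}q_{S,\mathcal P}^p\right),
$$
so it suffices to prove the identity for $d=p$ a single prime, that is, to show that each factor equals $p^{2}|\mathcal P_p|+p^{2}|\mathcal Q_p|-|\mathcal P_p||\mathcal Q_p|$.

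For the single-prime factor I would re-index the double sum. As $\nu$ ranges over $0,1,\dots,|\mathcal P_p|$ and, for each $\nu$, $S$ ranges over the subsets of $\mathcal P_p$ with $|S|=|\mathcal P_p|-\nu$, the pair $(\nu,S)$ covers every subset $S\subset\mathcal P_p$ exactly once, with $\nu=|\mathcal P_p|-|S|$. Hence the factor equals
$$
\sum_{S\subset \mathcal P_p}\bigl(|\mathcal P_p|-|S|+|\mathcal Q_p|\bigr)\,q_{S,\mathcal P}^p
\,=\,\bigl(|\mathcal P_p|+|\mathcal Q_p|\bigr)\sum_{S\subset \mathcal P_p}q_{S,\mathcal P}^p-\sum_{S\subset \mathcal P_p}|S|\,q_{S,\mathcal P}^p.
$$
Applying Eq.~\eqref{eq1} with $m=p$ to the first sum gives $p^{2}$, while Lemma~\ref{lambda} with $m=p$ evaluates the second sum as $|\mathcal P_p||\mathcal Q_p|$. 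Substituting these yields $\bigl(|\mathcal P_p|+|\mathcal Q_p|\bigr)p^{2}-|\mathcal P_p||\mathcal Q_p|$, which is exactly the asserted single-prime value; taking the product over $p\mid d$ then completes the argument.

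There is no serious obstacle here, as the lemma is essentially a direct corollary of the two preceding results. The only point requiring care is the bookkeeping in the factorisation step: one must verify that the ranges of the $\nu_p$ are genuinely independent (which relies on the square-freeness of $d$, so that the indexing primes are distinct) and that the summand splits cleanly as a product indexed by those primes. Once the one-prime identity is established by the re-indexing above, the general case is immediate.
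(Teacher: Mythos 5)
Your proposal is correct and is essentially the paper's own argument: the paper proves the identical single-prime identity via Eq.~\eqref{eq1} and Lemma~\ref{lambda} (as the base case), and merely packages the factorisation over the distinct primes $p\mid d$ as an induction on $\omega(d)$ rather than invoking the distributive law directly as you do. Your direct re-indexing of the pairs $(\nu,S)$ as subsets $S\subset\mathcal P_p$ is a slightly streamlined rendering of the same computation.
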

\begin{proof}
This is proved by induction on the number $\omega(d)$ of prime
factors of $d$. For the induction basis $\omega(d)=1$, say $d=p$, note that by~\eqref{eq1}
and Lemma~\ref{lambda} one indeed has
\begin{eqnarray*}
\sum_{0\le\nu_p\le|\mathcal P_p|}\Big((\nu_p+|\mathcal
Q_p|)\sum_{\substack{S\subset \mathcal P_p\\|S|=|\mathcal
    P_p|-\nu_p}}q_{S,\mathcal P}^p\Big)
&=& \Big(|\mathcal
Q_p| \sum_{S\subset \mathcal P_p}q_{S,\mathcal P}^p \Big)+\sum_{S\subset \mathcal P_p}(|\mathcal
    P_p|-|S|)q_{S,\mathcal P}^p\\
&=& |\mathcal
Q_p|p^{2}+|\mathcal
P_p|p^{2}-|\mathcal
    P_p||\mathcal
    Q_p|.
\end{eqnarray*}
For the induction step, consider $d$ with $\omega(d)=r+1$, where $r\ge
1$, say $d=p_1\ldots p_rp_{r+1}$. Then
\begin{eqnarray*}
&&\hspace{-2em}
\sum_{\substack{(\nu_p)_{p\mid
      d}\\0\le\nu_p\le|\mathcal P_p|}}\prod_{p\mid
  d}\Big((\nu_p+|\mathcal Q_p|)\sum_{\substack{S\subset \mathcal
    P_p\\|S|=|\mathcal P_p|-\nu_p}}q_{S,\mathcal P}^p\Big)\\
&=&\sum_{i=0}^{|\mathcal P_{p_{r+1}}|}\sum_{\substack{(\nu_p)_{p\mid
      d}\\0\le\nu_p\le|\mathcal P_p|\\ \nu_{p_{r+1}}=i}}(i+|\mathcal Q_{p_{r+1}}|)\sum_{\substack{S\subset \mathcal
    P_{p_{r+1}}\\|S|=|\mathcal P_{p_{r+1}}|-i}}q_{S,\mathcal P}^{p_{r+1}}\prod_{p\mid
  \frac{d}{p_{r+1}}}\Big((\nu_p+|\mathcal Q_p|)\sum_{\substack{S\subset \mathcal
    P_p\\|S|=|\mathcal P_p|-\nu_p}}q_{S,\mathcal P}^p\Big)\\
&=&\sum_{i=0}^{|\mathcal P_{p_{r+1}}|}(i+|\mathcal Q_{p_{r+1}}|)\sum_{\substack{S\subset \mathcal
    P_{p_{r+1}}\\|S|=|\mathcal P_{p_{r+1}}|-i}}q_{S,\mathcal P}^{p_{r+1}}\sum_{\substack{(\nu_p)_{p\mid
      \frac{d}{p_{r+1}}}\\0\le\nu_p\le|\mathcal P_p|}}\prod_{p\mid
  \frac{d}{p_{r+1}}}\Big((\nu_p+|\mathcal Q_p|)\sum_{\substack{S\subset \mathcal
    P_p\\|S|=|\mathcal P_p|-\nu_p}}q_{S,\mathcal P}^p\Big),
\end{eqnarray*}
which is, by the induction hypothesis,
\begin{eqnarray*}
&&\hspace{-2em}
\sum_{i=0}^{|\mathcal P_{p_{r+1}}|}(i+|\mathcal Q_{p_{r+1}}|)\sum_{\substack{S\subset \mathcal
    P_{p_{r+1}}\\|S|=|\mathcal P_{p_{r+1}}|-i}}q_{S,\mathcal P}^{p_{r+1}}\prod_{p\mid \frac{d}{p_{r+1}}}\left(p^{2}|\mathcal P_p|+ p^{2}|\mathcal
    Q_p|-|\mathcal P_p| |\mathcal Q_p|\right)\\
&=&\prod_{p\mid \frac{d}{p_{r+1}}}\left(p^{2}|\mathcal P_p|+ p^{2}|\mathcal
    Q_p|-|\mathcal P_p| |\mathcal Q_p|\right)\sum_{i=0}^{|\mathcal P_{p_{r+1}}|}\sum_{\substack{S\subset \mathcal
    P_{p_{r+1}}\\|S|=|\mathcal P_{p_{r+1}}|-i}}(i+|\mathcal
Q_{p_{r+1}}|)q_{S,\mathcal P}^{p_{r+1}}.
\end{eqnarray*}
It thus remains to show that
$$
\sum_{i=0}^{|\mathcal P_{p_{r+1}}|}\sum_{\substack{S\subset \mathcal
    P_{p_{r+1}}\\|S|=|\mathcal P_{p_{r+1}}|-i}}(i+|\mathcal
Q_{p_{r+1}}|)q_{S,\mathcal P}^{p_{r+1}}\,=\,p_{r+1}^{2}|\mathcal P_{p_{r+1}}|+ p_{r+1}^{2}|\mathcal
    Q_{p_{r+1}}|-|\mathcal P_{p_{r+1}}| |\mathcal Q_{p_{r+1}}|,
$$
which is clear since, by~\eqref{eq1}
and Lemma~\ref{lambda} again, one has
\begin{eqnarray*}
&&\hspace{-2em}
\sum_{i=0}^{|\mathcal P_{p_{r+1}}|}\sum_{\substack{S\subset \mathcal
    P_{p_{r+1}}\\|S|=|\mathcal P_{p_{r+1}}|-i}}(i+|\mathcal
Q_{p_{r+1}}|)q_{S,\mathcal P}^{p_{r+1}}\\
&=&\sum_{i=0}^{|\mathcal P_{p_{r+1}}|}\sum_{\substack{S\subset \mathcal
    P_{p_{r+1}}\\|S|=|\mathcal P_{p_{r+1}}|-i}}iq_{S,\mathcal P}^{p_{r+1}}+\sum_{i=0}^{|\mathcal P_{p_{r+1}}|}\sum_{\substack{S\subset \mathcal
    P_{p_{r+1}}\\|S|=|\mathcal P_{p_{r+1}}|-i}}|\mathcal
Q_{p_{r+1}}|q_{S,\mathcal P}^{p_{r+1}}\\
&=&\sum_{S\subset \mathcal
    P_{p_{r+1}}}(|\mathcal P_{p_{r+1}}|-|S|)q_{S,\mathcal P}^{p_{r+1}}+|\mathcal
Q_{p_{r+1}}|\sum_{S\subset \mathcal
    P_{p_{r+1}}}q_{S,\mathcal P}^{p_{r+1}}\\
&=&|\mathcal P_{p_{r+1}}|p_{r+1}^{2}-|\mathcal P_{p_{r+1}}| |\mathcal Q_{p_{r+1}}|+ |\mathcal
    Q_{p_{r+1}}|p_{r+1}^{2}.
\end{eqnarray*}
This completes the proof.
\end{proof}

The proof of the main result below needs the well-known estimate
\begin{equation}\label{LambdaCount}
|B_\rho({ x})\cap\mathbb Z^2|=\pi\rho^2+
O(\rho)+O(1),
\end{equation}
approximating the number of points of
$\mathbb Z^2$ in a large ball $B_\rho({ x})$ (the last error term being
required only when $\rho^2<1$). This is
obtained by dividing $B_\rho({ x})$ into fundamental regions for
$\mathbb Z^2$, each of volume $1$ and containing one point of
$\mathbb Z^2$, with the error terms arising from fundamental regions that
overlap the boundary of $B_\rho({ x})$. A more precise version is
given as~\cite[Prop.~1]{BMP}.

\begin{theorem}
The measure\/ $\nu$ is ergodic.
\end{theorem}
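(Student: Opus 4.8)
The plan is to deduce ergodicity of $(\mathbb X_V,\mathbb Z^2,\nu)$ from the averaged mixing criterion of~\cite[Thm.~1.17]{Walters}: the system is ergodic as soon as
$$
\lim_{R\to\infty}\frac{1}{\pi R^2}\sum_{x\in\mathbb Z^2\cap B_R(0)}\nu\big((A+x)\cap B\big)=\nu(A)\,\nu(B)
$$
holds for all $A,B$ in a semi-algebra generating the Borel $\sigma$-algebra on $\mathbb X_V$. Rather than the cylinder sets $C_\mathcal P$, I would run the argument on the sets $B_\mathcal P$ of Appendix~\ref{appa}, since these form such a semi-algebra and Corollary~\ref{nub} hands us the clean product formula $\nu(B_\mathcal P)=\prod_p\bigl(1-|\mathcal P_p|/p^2\bigr)$. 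The first step is then to identify the translated intersection: because $B_\mathcal P+x=\{X\mid \mathcal P+x\subset X\}$, one has $(B_\mathcal P+x)\cap B_\mathcal Q=B_{(\mathcal P+x)\cup\mathcal Q}$, and Corollary~\ref{nub}, applied to the finite set $(\mathcal P+x)\cup\mathcal Q$ (both sides vanishing when it fails to be admissible), yields
$$
\nu\big((B_\mathcal P+x)\cap B_\mathcal Q\big)=\prod_p\left(1-\frac{|((\mathcal P+x)\cup\mathcal Q)_p|}{p^2}\right).
$$
Crucially, the summand depends on $x$ only through the residues $[x]_p$, via $|((\mathcal P+x)\cup\mathcal Q)_p|=|\mathcal P_p|+|\mathcal Q_p|-|S|$, where $S\subset\mathcal P_p$ is the set recording which translates of points of $\mathcal P_p$ land in $\mathcal Q_p$, i.e.\ $[x]_p\in\mathcal Q^p_{S,\mathcal P}$.

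The second step is the averaging itself. The integrand lies in $[0,1]$, and for every prime $p$ exceeding the diameter of the two patches one has $|((\mathcal P+x)\cup\mathcal Q)_p|\le|\mathcal P|+|\mathcal Q|$, so the tail $\prod_{p>P}(\cdots)$ converges to $1$ \emph{uniformly} in $x$. Consequently, for a square-free $d=\prod_{p\le P}p$, the finite and $d\mathbb Z^2$-periodic product $\prod_{p\mid d}(\cdots)$ approximates the full product uniformly. For a $d\mathbb Z^2$-periodic function the lattice-point estimate~\eqref{LambdaCount} reduces the Cesàro average to the mean over one period, $\tfrac{1}{d^2}\sum_{[x]_d}\prod_{p\mid d}(\cdots)$, the contribution of the residue classes meeting $\partial B_R(0)$ being $O(d^2/R)$.

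The third step evaluates this periodic mean. Expanding the product over $p\mid d$ and factoring the residue sums across distinct primes by the Chinese Remainder Theorem (Proposition~\ref{crt}), the terms that arise are exactly the sums $\sum_{[x]_e}\prod_{p\mid e}|((\mathcal P+x)\cup\mathcal Q)_p|$ computed in Lemma~\ref{lambda2}, with~\eqref{eq1} and Lemma~\ref{lambda} supplying the single-prime input $\sum_{S\subset\mathcal P_p}q^p_{S,\mathcal P}=p^2$ and $\sum_{S\subset\mathcal P_p}|S|\,q^p_{S,\mathcal P}=|\mathcal P_p||\mathcal Q_p|$. The net effect is that the per-prime average collapses to
$$
\frac{1}{p^2}\sum_{[x]_p}\left(1-\frac{|\mathcal P_p|+|\mathcal Q_p|-|S|}{p^2}\right)=\left(1-\frac{|\mathcal P_p|}{p^2}\right)\left(1-\frac{|\mathcal Q_p|}{p^2}\right),
$$
so the periodic mean equals $\prod_{p\mid d}\bigl(1-|\mathcal P_p|/p^2\bigr)\bigl(1-|\mathcal Q_p|/p^2\bigr)$. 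Letting $d\to\infty$ and invoking Corollary~\ref{nub} twice identifies the limit with $\nu(B_\mathcal P)\,\nu(B_\mathcal Q)$, which is the required equality.

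I expect the genuine obstacle to be the interchange of the two limiting processes — the spatial Cesàro limit $R\to\infty$ and the truncation $d\to\infty$ of the infinite Euler-type product. What legitimises the swap is precisely the uniform tail estimate of the second step together with the boundary error control from~\eqref{LambdaCount}; by contrast, the combinatorial identities of Lemmas~\ref{lambda} and~\ref{lambda2} are the (lengthy but routine) bookkeeping that makes the finite-$d$ mean factor exactly into the product of per-prime averages.
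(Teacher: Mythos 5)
Your proposal is correct and proves exactly the identity the paper verifies --- the averaging criterion of Walters on the semi-algebra of sets $B_\mathcal P$, with input $\nu(B_\mathcal P)=\prod_p\bigl(1-|\mathcal P_p|/p^2\bigr)$ from Corollary~\ref{nub} and the observation that $(B_\mathcal P+x)\cap B_\mathcal Q=B_{(\mathcal P+x)\cup\mathcal Q}$ --- but your route through the computation is genuinely different. The paper expands the infinite product $\prod_p\bigl(1-|(\mathcal Q\cup x+\mathcal P)_p|/p^2\bigr)$ by M\"obius inversion into a sum over square-free $d$, counts the $x\in B_R(0)\cap\mathbb Z^2$ in prescribed residue classes via \cite[Prop.~1]{BMP} and Proposition~\ref{crt}, and is then left with the multi-prime combinatorial identity of Lemma~\ref{lambda2}, proved by induction on $\omega(d)$ with Lemma~\ref{lambda} and~\eqref{eq1} as the essential input. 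You instead truncate the Euler product at $p\le P$, note that the truncation is $d\mathbb Z^2$-periodic for $d=\prod_{p\le P}p$, reduce its Ces\`aro average to the mean over one period via~\eqref{LambdaCount}, and factor that mean across the primes $p\mid d$ by the Chinese Remainder Theorem, after which only the single-prime average is needed --- and that is exactly~\eqref{eq1} together with Lemma~\ref{lambda}, i.e.\ the base case $\omega(d)=1$ of Lemma~\ref{lambda2}. Your citation of Lemma~\ref{lambda2} is therefore superfluous: the CRT factorization of the periodic mean replaces its induction entirely, which is a small structural economy over the paper. Your version also makes explicit the one delicate analytic point, the interchange of the spatial limit $R\to\infty$ with the product over primes, via the uniform tail bound coming from $|((\mathcal P+x)\cup\mathcal Q)_p|\le|\mathcal P|+|\mathcal Q|$; in the paper the corresponding interchange (taking $R\to\infty$ inside the M\"obius sum over $d$) is passed over silently, though it is justified by the domination $\prod_{p\mid d}|(\cdots)_p|/p^2\le C^{\omega(d)}/d^2$. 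Two points worth stating explicitly in your write-up: the set identity for $(B_\mathcal P+x)\cap B_\mathcal Q$ rests on the hereditary characterization $\mathbb X_V=\mathbb A$ of Theorem~\ref{charachull}, with the convention $B_{\mathcal A}=\varnothing$ for inadmissible $\mathcal A$ (consistent with Corollary~\ref{nub}, since some factor then vanishes); and the bound $|((\mathcal P+x)\cup\mathcal Q)_p|\le|\mathcal P|+|\mathcal Q|$ holds for all $p$, with no restriction to primes exceeding the patch diameters, so the uniform tail estimate is even simpler than you suggest.
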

\begin{proof}
We have to show that
$$
\nu(B_\mathcal P)\nu(B_\mathcal Q)=\lim_{R\to \infty}\frac{1}{\pi R^2}\sum_{x\in B_R(0)\cap \mathbb Z^2}\nu(x+B_\mathcal P\cap B_\mathcal Q)
$$
for any finite admissible subsets $\mathcal P$ and  $\mathcal Q$ of
$\mathbb Z^2$; cf.~\cite[Thm.~1.17(i)]{Walters}. By Corollary~\ref{nub}, the latter is equivalent to
$$
\prod_p\left(1-\frac{|\mathcal
    P_p|}{p^{2}}\right)\prod_p\left(1-\frac{|\mathcal
    Q_p|}{p^{2}}\right)=\lim_{R\to \infty}\frac{1}{\pi R^2}\sum_{x\in B_R(0)\cap \mathbb Z^2}\nu(x+B_\mathcal P\cap B_\mathcal Q).
$$
Let $\mu$ denote the M\"obius function for the rest of this proof. The left-hand side is equal to
\begin{eqnarray*}
\prod_p\left(1-\frac{|\mathcal P_p|}{p^{2}}- \frac{|\mathcal
    Q_p|}{p^{2}}+\frac{|\mathcal P_p| |\mathcal Q_p|}{p^{4}} \right)&=&\prod_p\left(1-\left(\frac{|\mathcal P_p|}{p^{2}}+ \frac{|\mathcal
    Q_p|}{p^{2}}-\frac{|\mathcal P_p||\mathcal Q_p|}{p^{4}} \right)\right)\\
&=&\sum_{\text{\scriptsize $d$ square-free}}\mu(d)\prod_{p\mid d}\left(\frac{|\mathcal P_p|}{p^{2}}+ \frac{|\mathcal
    Q_p|}{p^{2}}-\frac{|\mathcal P_p| |\mathcal Q_p|}{p^{4}} \right)\\
&=&\sum_{\text{\scriptsize $d$ square-free}}\frac{\mu(d)}{d^{2}}\prod_{p\mid d}\left(|\mathcal P_p|+ |\mathcal
    Q_p|-\frac{|\mathcal P_p| |\mathcal Q_p|}{p^{2}} \right).
\end{eqnarray*}
For a fixed $R>0$, the right-hand side is
\begin{eqnarray*}
&&\hspace{-2em}\frac{1}{\pi R^2}\sum_{x\in B_R(0)\cap
  \mathbb Z^2}\prod_p\left(1-\frac{|(\mathcal Q\cup x+\mathcal
    P)_p|}{p^{2}}\right)\\
&=& \frac{1}{\pi R^2}\sum_{x\in B_R(0)\cap
  \mathbb Z^2}\hspace{1mm}\sum_{\text{\scriptsize $d$
    square-free}}\mu(d)\prod_{p\mid d}\frac{|(\mathcal Q\cup x+\mathcal
    P)_p|}{p^{2}}\\
&=& \frac{1}{\pi R^2}\sum_{\text{\scriptsize $d$
    square-free}}\mu(d)\sum_{x\in B_R(0)\cap
  \mathbb Z^2}\prod_{p\mid d}\frac{|(\mathcal Q\cup x+\mathcal
    P)_p|}{p^{2}}\\
&=& \frac{1}{\pi R^2}\sum_{\text{\scriptsize $d$
    square-free}}\mu(d)\sum_{\substack{(\nu_p)_{p\mid d}\\|\mathcal
    Q_p|\le\nu_p\le|\mathcal Q_p|+|\mathcal P_p|}}\,\sum_{\substack{x\in B_R(0)\cap
  \mathbb Z^2\\\forall p\mid d: |(\mathcal Q\cup x+\mathcal
    P)_p|=\nu_p}}\prod_{p\mid d}\frac{\nu_p}{p^{2}}\\
&=& \frac{1}{\pi R^2}\sum_{\text{\scriptsize $d$
    square-free}}\mu(d)\sum_{\substack{(\nu_p)_{p\mid d}\\|\mathcal
    Q_p|\le\nu_p\le|\mathcal Q_p|+|\mathcal P_p|}}\prod_{p\mid d}\frac{\nu_p}{p^{2}}\sum_{\substack{x\in B_R(0)\cap
  \mathbb Z^2\\\forall p\mid d: |(\mathcal Q\cup x+\mathcal
    P)_p|=\nu_p}}1\\
&=& \frac{1}{\pi R^2}\sum_{\text{\scriptsize $d$
    square-free}}\frac{\mu(d)}{d^{2}}\sum_{\substack{(\nu_p)_{p\mid d}\\|\mathcal
    Q_p|\le\nu_p\le|\mathcal Q_p|+|\mathcal P_p|}}\prod_{p\mid d}\nu_p\sum_{\substack{x\in B_R(0)\cap
  \mathbb Z^2\\\forall p\mid d: |(\mathcal Q\cup x+\mathcal
    P)_p|=\nu_p}}1.\end{eqnarray*}
By~\cite[Prop.~1]{BMP} and Proposition~\ref{crt}
with $\mathbb Z^2$ replaced by the lattices $p\mathbb Z^2$, where
$p\mid d$, one obtains the estimate
$$
\left(\frac{\pi R^2}{d^{2}}+
O\left(\left(\frac{R^2}{d^{2}}\right)^{1-1/2}\right)+O(1)\right)\prod_{p\mid
d}\big|\{x_p\in (\mathbb Z^2)_p\,:\,|(\mathcal Q\cup x+\mathcal
    P)_p|=\nu_p\}\big|
$$
for the inner sum. Substituting this in the above expression and
letting $R$ tend to infinity, one obtains
\begin{eqnarray*}
&&\hspace{-2em}
\sum_{\text{\scriptsize $d$
    square-free}}\frac{\mu(d)}{d^{4}}\sum_{\substack{(\nu_p)_{p\mid d}\\|\mathcal
    Q_p|\le\nu_p\le\mathcal |Q_p|+|\mathcal P_p|}}\prod_{p\mid d}\left(\nu_p\,\big|\{x_p\in (\mathbb Z^2)_p\,:\,|(\mathcal Q\cup x+\mathcal
    P)_p|=\nu_p\}\big|\right)\\
&=&\sum_{\text{\scriptsize $d$
    square-free}}\frac{\mu(d)}{d^{4}}\sum_{\substack{(\nu_p)_{p\mid
      d}\\0\le\nu_p\le|\mathcal P_p|}}\prod_{p\mid
  d}\left((\nu_p+|\mathcal Q_p|)\,\big|\{x_p\in (\mathbb Z^2)_p\,:\,|(\mathcal Q\cup x+\mathcal
    P)_p|=\nu_p+|\mathcal Q_p|\}\big|\right)
\end{eqnarray*}
and the inner product can be rewritten as
\begin{eqnarray*}
&&\hspace{-2em}
\prod_{p\mid
  d}\left((\nu_p+|\mathcal Q_p|)\,\big|\{x_p\in (\mathbb Z^2)_p\,:\,x_p\in \mathcal Q_{S,\mathcal P}^p\mbox{ for
    $S\subset\mathcal P_p$ with $|S|=|\mathcal P_p|-\nu_p$}\}\big|\right)
\\
&=&
\prod_{p\mid
  d}\Big((\nu_p+|\mathcal Q_p|)\sum_{\substack{S\subset \mathcal
    P_p\\|S|=|\mathcal P_p|-\nu_p}}q_{S,\mathcal P}^p\Big).
\end{eqnarray*}

Thus, in order to prove the claim, it suffices to show, for square-free
$d$, the identity
$$
\sum_{\substack{(\nu_p)_{p\mid
      d}\\0\le\nu_p\le|\mathcal P_p|}}\prod_{p\mid
  d}\Big((\nu_p+|\mathcal Q_p|)\sum_{\substack{S\subset \mathcal
    P_p\\|S|=|\mathcal P_p|-\nu_p}}q_{S,\mathcal P}^p\Big)\,=\,\prod_{p\mid d}\left(p^{2}|\mathcal P_p|+ p^{2}|\mathcal
    Q_p|-|\mathcal P_p| |\mathcal Q_p|\right),
$$
which is just the content of Lemma~\ref{lambda2}. 
\end{proof}

\renewcommand{\theequation}{B\arabic{equation}}
\setcounter{equation}{0}

\section{Isomorphism between $(\mathbb X_{V},\mathbb Z^2,\nu)$ and $(\prod_p (\mathbb Z^2)_p,\mathbb Z^2,\mu)$}\label{appb}

Let $\mathbb A_1$ be the Borel subset of $\mathbb X_V=\mathbb A$ consisting
of the elements $X\in\mathbb X_V$ that satisfy
$$
|X_p|=p^2-1
$$
for any prime $p$, i.e.\ $X$ misses exactly one coset of $p\mathbb
Z^2$ in $\mathbb Z^2$. There is a natural Borel map
$$
\theta\!:\,\mathbb A_1\rightarrow \mathbb T,
$$
given by
$X\mapsto ([y_p]_p)$, where $[y_p]_p$ is uniquely determined by $[y_p]_p\not\in
X_p$.  Note that $\theta$ fails to be continuous or injective. Clearly, $\mathbb A_1$ is $\mathbb Z^2$-invariant and
$\theta$ intertwines the $\mathbb Z^2$-actions.  Note also that, for
$X\in\mathbb A_1$, one clearly has 
\begin{equation}\label{phitheta}
X\subset\varphi(\theta(X)).
\end{equation}

\begin{lemma}\label{va1}
One has\/ $V\in\mathbb A_1$.
\end{lemma}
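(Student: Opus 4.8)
The plan is to unwind the definition of $\mathbb{A}_1$ and reduce the claim to a single numerical identity. Since $V\in\mathbb{A}=\mathbb{X}_V$ is already established, membership in $\mathbb{A}_1$ requires only that $|V_p|=p^2-1$ for every prime $p$, i.e.\ that the reduction $V_p$ of $V$ modulo $p\mathbb{Z}^2$ misses exactly one coset. I would prove this by two opposite inequalities.

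First I would dispatch the easy bound $|V_p|\le p^2-1$. Because $V=\mathbb{Z}^2\setminus\bigcup_q q\mathbb{Z}^2$ is disjoint from $p\mathbb{Z}^2$, no point of $V$ reduces to the zero coset $[0]_p=[(0,0)]_p$, so $[0]_p\notin V_p$ and at most $p^2-1$ cosets remain; this is also immediate from admissibility, $|V_p|<p^2$.

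The substantive part, and the step I expect to be the only real obstacle, is the reverse inequality: every \emph{nonzero} class in $(\mathbb{Z}^2)_p$ should contain a visible point, so that $V_p=(\mathbb{Z}^2)_p\setminus\{[0]_p\}$ and $|V_p|=p^2-1$. Given $[x]_p\neq[0]_p$ with representative $x=(x_1,x_2)$, at least one coordinate is a unit modulo $p$; by symmetry I may assume $p\nmid x_1$, in particular $x_1\neq 0$. Fixing the first coordinate equal to $x_1$, I would invoke the Chinese Remainder Theorem to choose $y_2$ with $y_2\equiv x_2\pmod{p}$ and $y_2\equiv 1\pmod{q}$ for each of the finitely many primes $q\mid x_1$; this system is solvable since $p$ and these $q$ are pairwise distinct (each $q\neq p$ as $p\nmid x_1$). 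Then $y_2$ avoids every prime factor of $x_1$, whence $\gcd(x_1,y_2)=1$, so $y:=(x_1,y_2)$ lies in $V$ and satisfies $[y]_p=[x]_p$ by construction.

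Combining the two inequalities yields $|V_p|=p^2-1$ for all $p$, which is precisely the defining condition of $\mathbb{A}_1$, so $V\in\mathbb{A}_1$. As an alternative to the explicit CRT lift, one could instead use Proposition~\ref{propbasic}(b): since $(1,0)\in V$ and $\operatorname{SL}(2,\mathbb{Z})$ surjects onto $\operatorname{SL}(2,\mathbb{F}_p)$, which acts transitively on the nonzero vectors of $\mathbb{F}_p^2$, the $\operatorname{GL}(2,\mathbb{Z})$-invariance of $V$ at once furnishes a visible representative in each nonzero class.
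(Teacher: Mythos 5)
Your proof is correct, and it takes a genuinely different route from the paper's. The paper argues indirectly through the hull characterisation $\mathbb X_V=\mathbb A$ (Theorem~\ref{charachull}): for a fixed prime $p$ it builds an abstract finite set $A$ with $|A_p|=p^2-1$ which, by the Chinese Remainder Theorem, occupies only the zero class modulo each smaller prime and is therefore admissible; since $\mathbb A=\mathbb X_V$, some translate satisfies $t+A\subset V$, forcing $p^2-1=|(t+A)_p|\le |V_p|\le p^2-1$, where the upper bound is admissibility of $V$, exactly as in your first step. You instead exhibit a visible point in each nonzero class of $(\mathbb Z^2)_p$ directly: with $p\nmid x_1$ (after a coordinate swap if necessary), the CRT lift $y_2\equiv x_2 \pmod p$ and $y_2\equiv 1\pmod q$ for the finitely many primes $q\mid x_1$ yields $\gcd(x_1,y_2)=1$, hence $(x_1,y_2)\in V$ with the required reduction; this is watertight, including the degenerate case $x_1=\pm 1$, where the second congruence system is empty. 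What your argument buys is independence from Theorem~\ref{charachull} --- a nontrivial result going back to Herzog and Stewart --- so your proof is elementary and self-contained, and it even identifies $V_p$ explicitly as $(\mathbb Z^2)_p\setminus\{[0]_p\}$; your alternative via the surjectivity of $\operatorname{SL}(2,\mathbb Z)\rightarrow\operatorname{SL}(2,\mathbb F_p)$ and transitivity on nonzero vectors, combined with the $\operatorname{GL}(2,\mathbb Z)$-invariance of $V$ from Proposition~\ref{propbasic}(b), is an equally valid conceptual variant. What the paper's detour buys is uniformity: the template ``any admissible finite pattern occurs in $V$ up to translation'' transfers verbatim to the $k$-free, $\mathscr B$-free and number-field settings of Sections~\ref{kfree}--\ref{number}, where your coprime-coordinate construction is tied to the specific arithmetic of visible points of $\mathbb Z^2$ and would have to be reworked case by case.
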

\begin{proof}
Fix a prime number $p$ and choose a set $A$ of $p^2-1$ elements of $\mathbb
Z^2$ such that $|A_p|=p^2-1$. By the Chinese Remainder
Theorem (Prop.\ \ref{crt}), we may assume that $A_{p'}=\{(0,0)_{p'}\}$ for the
finitely many primes $p'<p$. Then, $A\in\mathbb A$ and, by
Theorem~\ref{charachull}, there is a translation $t\in \mathbb Z^2$ such
that $t+A\subset V$. Since 
$$
p^2-1= |(t+A)_p|\le |V_p|\le p^2-1,
$$
the assertion follows.
\end{proof}

\begin{lemma}
One has\/ $\nu(\mathbb A_1)=1$.
\end{lemma}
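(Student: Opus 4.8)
The plan is to reduce the statement to a single scalar identity for each prime and then to one asymptotic vanishing estimate. For a prime $p$ and $X\in\mathbb X_V=\mathbb A$, write $D_p(X):=p^2-|X_p|$ for the number of residue classes modulo $p\mathbb Z^2$ that $X$ misses; admissibility (Theorem~\ref{charachull}) gives $D_p(X)\ge 1$ for every $X$. Since $\mathbb A_1=\bigcap_p\{X:D_p(X)=1\}$ is a countable intersection, it suffices to prove $\nu(\{D_p=1\})=1$ for each fixed $p$. As $D_p$ is integer valued with $D_p\ge 1$, this is equivalent to $\int_{\mathbb X_V}D_p\,\mathrm d\nu=1$. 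Writing $A_c$ for the Borel set of $X\in\mathbb X_V$ that miss the class $c\in(\mathbb Z^2)_p$, we have $D_p=\sum_c\mathbf 1_{A_c}$, so by Tonelli and the translation invariance of $\nu$, under which all the $\nu(A_c)$ coincide, one gets $\int D_p\,\mathrm d\nu=\sum_c\nu(A_c)=p^2\,\nu(A_0)$, where $A_0=\{X:X\cap p\mathbb Z^2=\varnothing\}$. Everything thus reduces to the claim $\nu(A_0)=1/p^2$.

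The lower bound $\nu(A_0)\ge 1/p^2$ is immediate from $D_p\ge 1$ and the identity above. For the matching upper bound I would approximate $A_0$ from outside by the clopen sets $\{X:X\cap F_\rho=\varnothing\}$, where $F_\rho:=p\mathbb Z^2\cap B_\rho(0)$, and use continuity of $\nu$ along this decreasing family to get $\nu(A_0)=\lim_{\rho\to\infty}\nu(\{X:X\cap F_\rho=\varnothing\})$. Each of these measures is evaluated by inclusion--exclusion together with Corollary~\ref{nub}: since $\mathbf 1[X\cap F_\rho=\varnothing]=\prod_{z\in F_\rho}(1-\mathbf 1[z\in X])$, integration gives
\[
\nu(\{X:X\cap F_\rho=\varnothing\})=\sum_{G\subseteq F_\rho}(-1)^{|G|}\prod_q\Big(1-\frac{|G_q|}{q^2}\Big).
\]
Because every nonempty $G\subseteq F_\rho$ lies in the single class $[0]_p$, the factor at the prime $p$ equals $1-1/p^2$ for all such $G$; factoring it out separates the $p$-contribution and yields $\nu(\{X:X\cap F_\rho=\varnothing\})=1/p^2+(1-1/p^2)\,S_\rho$, where $S_\rho:=\sum_{G\subseteq F_\rho}(-1)^{|G|}\prod_{q\neq p}(1-|G_q|/q^2)$.

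The crux is then to show $S_\rho\to 0$ as $\rho\to\infty$. Here $S_\rho$ is precisely the probability that a configuration avoids $F_\rho$ in the auxiliary $\mathscr B$-free system of Section~\ref{bfree} obtained by dropping $p$, i.e.\ with $\mathscr B=\{q:q\neq p\text{ prime}\}$; as a decreasing sequence it tends to the probability of avoiding all of $p\mathbb Z^2$ in that system. This last probability is $0$ because, in the $p$-removed system, every class modulo $p\mathbb Z^2$ is occupied with positive density $\prod_{q\neq p}(1-q^{-2})$, so entirely avoiding an infinite sublattice is a null event. I would make this rigorous by a second-moment estimate: under the auxiliary measure the number of configuration points in $F_\rho$ has mean $\asymp|F_\rho|\asymp\rho^2$, while the pair correlations furnished by Corollary~\ref{nub} (applied to two-point sets, which are negatively correlated for generic pairs) bound the variance at lower order, so Chebyshev's inequality forces the probability of the empty intersection to vanish. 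Granting $S_\rho\to 0$ gives $\nu(A_0)=1/p^2$, hence $\int D_p\,\mathrm d\nu=1$, so $D_p=1$ holds $\nu$-almost everywhere; intersecting over the countably many primes yields $\nu(\mathbb A_1)=1$.

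I expect the vanishing $S_\rho\to 0$ to be the only real obstacle; the reduction and the inclusion--exclusion computation are routine. An alternative to the auxiliary-system argument is to work directly with the $\nu$-genericity of $V$ (Theorem~\ref{c2}(a)): the clopen quantity $\nu(\{X:X\cap F_\rho=\varnothing\})$ equals the orbit density of shifts $x$ with $(V+x)\cap F_\rho=\varnothing$; the shifts with $[x]_p=[0]_p$ (density $1/p^2$) always qualify since $V$ misses the class $[0]_p$, whereas a shift with $[x]_p\neq[0]_p$ qualifies only if $V$ has a hole covering the growing pattern $F_\rho-x$ inside a nonzero class, and by the rareness of large holes (Proposition~\ref{propbasic}(a)) the density of such extra shifts tends to $0$ with $\rho$. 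Either route isolates the same estimate as the heart of the matter.
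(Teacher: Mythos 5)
Your proof is correct in strategy but takes a genuinely different route from the paper's. The paper argues softly: by the ergodicity of $\nu$ (the theorem of Appendix~A), $\nu$-almost every $X$ has a dense $\mathbb Z^2$-orbit; since $V\in\mathbb A_1$ (Lemma~\ref{va1}) and representatives of the $p^2-1$ classes of $V_p$ sit within a bounded window, a dense orbit forces some translate $t+X$ to contain these representatives, so $|X_p|\ge p^2-1$, while admissibility gives the reverse inequality. Your argument instead is a direct moment computation from the patch-frequency formula and never invokes ergodicity: the reduction $\nu(\mathbb A_1)=1\iff\int D_p\,{\rm d}\nu=1\iff\nu(A_0)=1/p^2$ (using $1\le D_p\le p^2$, integrality, and translation invariance to equate the $\nu(A_c)$) is sound, as are the outer approximation by the clopen sets $\{X:X\cap F_\rho=\varnothing\}$ and the inclusion--exclusion step; note that Corollary~\ref{nub} extends to arbitrary finite $G$ with both sides vanishing when some $|G_q|=q^2$, which is consistent with how the appendix itself uses it. What your route buys is independence from the ergodicity theorem (so it could be run earlier in the logical order); what it costs is a genuinely quantitative estimate, $S_\rho\to0$, which the paper's soft argument never needs, and which relies on the $\mathscr B$-free analogue of Corollary~\ref{nub} for $\mathscr B=\{q:q\ \text{prime},\ q\neq p\}$ --- asserted but not proved in Section~\ref{bfree}, in line with the paper's conventions.

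One concrete caveat on the step you yourself identify as the crux: your stated mechanism for the variance bound, namely that two-point sets are ``negatively correlated for generic pairs'', is not sufficient and would fail as literally argued. Pairs $z,w\in F_\rho$ whose difference lies in $q\mathbb Z^2$ for a small prime $q\neq p$ are \emph{positively} correlated with covariance bounded away from zero, and such pairs occur with positive density (density $1/q^2$), so the covariance sum is not termwise small. What actually saves the argument is averaged cancellation: with $c=\prod_{q\neq p}(1-q^{-2})$ and $\xi'=\prod_{q\neq p}(1-2q^{-2})$ one has $\nu'(B_{\{z,w\}})=\xi'\prod_{q\mid(z-w),\,q\neq p}\bigl(1+\tfrac{1}{q^2-2}\bigr)$, and expanding the product as a divisor sum $\sum_{d}a_d\,\mathbf 1[\ts z\equiv w \bmod dp\mathbb Z^2]$ with multiplicative weights $a_d\ll_\varepsilon d^{-2+\varepsilon}$, then counting lattice points, yields $\frac{1}{|F_\rho|^2}\sum_{z,w\in F_\rho}\nu'(B_{\{z,w\}})\to\xi'\prod_{q\neq p}\bigl(1+\tfrac{1}{q^2(q^2-2)}\bigr)=c^2$, whence $\operatorname{Var}(N_\rho)=o(|F_\rho|^2)$ and Chebyshev closes the proof. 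This is a Mirsky-type computation of exactly the kind in~\cite{BMP,PH}, so your estimate is true and standard, but it should be proved this way rather than by appeal to generic negative correlation; your alternative sketch via genericity of $V$ and ``rareness of large holes'' has the same issue, since Proposition~\ref{propbasic}(a) only asserts existence of holes, not a quantitative density bound, and that bound is essentially the statement being proved.
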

\begin{proof}
The ergodicity of the full measure
$\nu$ (Thms.~\ref{freq} and~\ref{c2}(b)) implies that $\nu$-almost
every $X\in\mathbb X_V$ has a dense $\mathbb Z^2$-orbit;
compare~\cite[Thm.~1.7]{Walters}. It follows from Lemma~\ref{va1} that
$$
\{X\in\mathbb X_V\,|\,X \mbox{ has a dense $\mathbb Z^2$-orbit}\}\subset
\mathbb A_1.
$$
This inclusion is due to the fact that, if $X$ has a dense
orbit, then in particular $V\in\mathbb A_1$ is
an element of the orbit closure of $X$. Since, for any prime $p$, representatives of the
$p^2-1$ different elements of $V_p$ in $V$ can be chosen within a finite
distance from the origin, it is clear from the definition of the
topology on $\mathbb X_V$ that there is a translation $t\in\mathbb
Z^2$ such that $t+X$ contains these representatives. Thus
$$p^2-1\ge|X_p|=|(t+X)_p|\ge|V_p|=p^2-1$$ and the assertion follows.
\end{proof}

Hence the push-forward measure of $\nu$ to $\mathbb T$ by
the map $\theta$ is a $\mathbb
Z^2$-invariant probability measure and thus is the normalised Haar
measure $\mu$. Set
$$
\mathbb T_1:=\varphi^{-1}(\mathbb A_1),
$$
which is a $\mathbb Z^2$-invariant Borel
set with $\varphi(\mathbb T_1)\subset \mathbb A_1$.  In
particular, this shows that $\theta\circ\varphi|_{\mathbb
  T_1}=\operatorname{id}_{\mathbb T_1}$ and thus the restriction of
$\varphi$ to $\mathbb T_1$ and the restriction of $\theta$ to
$\varphi(\mathbb T_1)$ are
injective. 

\begin{lemma}\label{a1t1}
One has\/ $\theta(\mathbb
A_1)=\mathbb T_1$. 
\end{lemma}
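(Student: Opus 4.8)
The plan is to prove the set equality $\theta(\mathbb{A}_1)=\mathbb{T}_1$ by establishing the two inclusions $\mathbb{T}_1\subseteq\theta(\mathbb{A}_1)$ and $\theta(\mathbb{A}_1)\subseteq\mathbb{T}_1$ separately. Both will be read off from the relations already recorded just before the statement, namely the left-inverse identity $\theta\circ\varphi|_{\mathbb{T}_1}=\operatorname{id}_{\mathbb{T}_1}$ and the containment \eqref{phitheta}, together with the defining cardinality condition $|X_p|=p^2-1$ satisfied by elements of $\mathbb{A}_1$.

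For the inclusion $\mathbb{T}_1\subseteq\theta(\mathbb{A}_1)$, I would start from an arbitrary $\omega\in\mathbb{T}_1$ and put $X:=\varphi(\omega)$. Since $\mathbb{T}_1=\varphi^{-1}(\mathbb{A}_1)$ by definition, this $X$ lies in $\mathbb{A}_1$, and the identity $\theta\circ\varphi|_{\mathbb{T}_1}=\operatorname{id}_{\mathbb{T}_1}$ gives $\theta(X)=\theta(\varphi(\omega))=\omega$. Hence $\omega=\theta(X)\in\theta(\mathbb{A}_1)$, so this inclusion is essentially a restatement of the left-inverse property of $\varphi$ on $\mathbb{T}_1$ and requires no further work.

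The forward inclusion $\theta(\mathbb{A}_1)\subseteq\mathbb{T}_1$ is where the content lies, because for a general point of $\mathbb{T}$ the map $\varphi$ may produce a set missing more than one coset modulo some $p\mathbb{Z}^2$ and thus leaving $\mathbb{A}_1$; one must verify this cannot happen for points of the form $\theta(X)$. Fix $X\in\mathbb{A}_1$ and write $\theta(X)=([y_p]_p)$, so that $[y_p]_p$ is the unique coset absent from $X_p$. Recalling $\varphi(\theta(X))=\mathbb{Z}^2\setminus\bigcup_p(y_p+p\mathbb{Z}^2)$, I would run a cardinality sandwich at each prime $p$: on the one hand, every $x\in\mathbb{Z}^2$ with $[x]_p=[y_p]_p$ lies in $y_p+p\mathbb{Z}^2$ and is therefore deleted, so $[y_p]_p\notin(\varphi(\theta(X)))_p$ and hence $|(\varphi(\theta(X)))_p|\le p^2-1$; on the other hand, \eqref{phitheta} gives $X\subseteq\varphi(\theta(X))$, whence $X_p\subseteq(\varphi(\theta(X)))_p$ and $|(\varphi(\theta(X)))_p|\ge|X_p|=p^2-1$. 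Combining the two bounds yields $|(\varphi(\theta(X)))_p|=p^2-1$ for every prime $p$, i.e.\ $\varphi(\theta(X))\in\mathbb{A}_1$, which is precisely $\theta(X)\in\varphi^{-1}(\mathbb{A}_1)=\mathbb{T}_1$.

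The main (and essentially the only) obstacle is the forward inclusion, and within it the upper bound $|(\varphi(\theta(X)))_p|\le p^2-1$: this is what forces $\varphi(\theta(X))$ to omit no coset beyond $[y_p]_p$, and it hinges on the fact that $\theta$ records exactly the single missing coset of each $X_p$. Everything else reduces to monotonicity of reduction modulo $p$ and the defining property of $\mathbb{A}_1$.
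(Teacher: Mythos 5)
Your proof is correct and follows essentially the same route as the paper: the inclusion $\mathbb{T}_1\subseteq\theta(\mathbb{A}_1)$ is read off from $\theta\circ\varphi|_{\mathbb{T}_1}=\operatorname{id}_{\mathbb{T}_1}$, and the forward inclusion combines $X\subseteq\varphi(\theta(X))$ with the cardinality condition $|X_p|=p^2-1$. The only (cosmetic) difference is that you run the cardinality argument directly as a two-sided sandwich, with the upper bound $|(\varphi(\theta(X)))_p|\le p^2-1$ made explicit from the formula for $\varphi$, whereas the paper argues by contradiction and gets that bound implicitly from admissibility of $\varphi(\theta(X))$.
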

\begin{proof}
It suffices to prove the inclusion $\theta(\mathbb
A_1)\subset\mathbb T_1$, since this immediately yields the assertion due to $\mathbb T_1=\theta(\varphi(\mathbb
T_1))\subset \theta(\mathbb A_1)$. So let us assume the existence of an
$X\in\mathbb A_1$ with $\theta(X)\not\in \mathbb T_1$, i.e.\ 
$\varphi(\theta(X))\not\in\mathbb
A_1$. Then there is a prime number $p$ such that
$$|(\varphi(\theta(X)))_p|<p^2-1.$$ Using~\eqref{phitheta},
this implies that also $|X_p|<p^2-1$, a contradiction. 
\end{proof}

In particular, this shows that $\theta^{-1}(\mathbb T_1)=\mathbb
A_1$ and thus
$$
\mu(\mathbb T_1)=\nu(\mathbb A_1)=1.
$$  
It follows that $\theta$ is
a factor map from $(\mathbb X_{V},\mathbb Z^2,\nu)$ to $(\prod_p
(\mathbb Z^2)_p,\mathbb Z^2,\mu)$. 

In order to see that $\theta$ is in fact an isomorphism, let us first
note that the Borel map $\varphi\!:\,\mathbb T\rightarrow \mathbb
X_V$ is measure-preserving since, for any $\rho$-patch
$\mathcal P$, one has 
\begin{eqnarray*}
\mu(\varphi^{-1}(B_\mathcal P)) &=&\mu\big(\{([y_p]_p)\in\mathbb
T\,|\,[y_p]_p\not\in\mathcal P_p\mbox{ for all } p 
\}\big)\\
&=&\prod_p\left(\frac{p^2-|\mathcal
    P_p|}{p^2}\right)\\
&=&\nu(B_\mathcal P)
\end{eqnarray*}
by Corollary~\ref{nub}. Next, consider the
subset $\mathbb A_1^*$ of elements $X\in\mathbb A_1$ that are maximal
elements of $\mathbb A$ with respect to inclusion, i.e. 
$$
\forall\, Y\in\mathbb A:\,(X\subset Y\Rightarrow
X=Y).
$$ 
Clearly, $V$ and
every translate of $V$ are elements of $\mathbb A_1^*$; see
Lemma~\ref{va1}. Using Lemma~\ref{a1t1}, one can
verify that $\mathbb
A_1^*$ contains precisely the elements $X\in\mathbb A_1$ with
\begin{equation}\label{phitheta2}
X=\varphi(\theta(X)).
\end{equation}
Employing~\eqref{phitheta2}, one further
verifies that $$\mathbb
A_1^*=\mathbb A_1\cap \varphi(\mathbb T).$$ Since $\varphi(\mathbb
T)$ can be seen to be a Borel set, 
$\mathbb A_1^*$ is thus a Borel
set with measure
$$
\nu(\mathbb A_1^*)=\nu(\mathbb A_1\cap \varphi(\mathbb T))=\nu(\mathbb A_1)+\nu(\varphi(\mathbb T))-\nu(\mathbb A_1\cup \varphi(\mathbb T))=1.
$$ 
Setting 
$$
\mathbb T_1^*:=\varphi^{-1}(\mathbb A_1^*),
$$
the restrictions $\varphi\!:\,\mathbb T_1^*\rightarrow \mathbb A_1^*$ and
$\theta\!:\,\mathbb A_1^*\rightarrow\mathbb T_1^*$ are well-defined
($X\in\mathbb A_1^*\Rightarrow \varphi(\theta(X))=X\in\mathbb A_1^*$)
and can now be
shown to be  bijective and
inverses to each other. Hence $\theta$ and 
$\varphi$ are isomorphisms.

\section*{Acknowledgements}
It is our pleasure to thank Francesco Cellarosi, Joanna Ku\l
aga-Przymus, Marius Lema\'nczyk, Daniel Lenz, Peter Sarnak and Rudolf Scharlau for
valuable discussions. Special thanks are due to Tobias Jakobi for
providing Figures~\ref{fig: squarefreenumber} and~\ref{fig: squarefreenumberdiff}. We are grateful to an anonymous
reviewer for a number of careful comments. This work was supported by the German Research Foundation (DFG), within
the CRC~701.

\end{document}